\apptocmd{\sloppy}{\hbadness 10000\relax}{}{}
\newtheorem{theorem}{Theorem}[section]
\newtheorem*{theorem*}{Theorem}{\,}
\newtheorem{corollary}[theorem]{Corollary}
\newtheorem{lemma}[theorem]{Lemma}
\newtheorem{definition}{Definition}[section]
\newtheorem*{definition*}{Definition}{}
\theoremstyle{definition}
\declaretheorem[style=definition,qed=$\Diamond$,sibling=definition]{example}
\declaretheorem[style=definition,qed=$\Diamond$,sibling=definition]{remark}
\numberwithin{equation}{section}
\newcommand{\one}{\mathbb{1}}
\newcommand{\C}{\mathscr{C}}
\newcommand{\sm}{\mathscr{S}}
\newcommand{\uinf}{\mathscr{U}}
\newcommand{\sphere}{\mathbb{S}}
\newcommand{\sech}{\operatorname{sech}}
\newcommand{\stl}{\mathbb{L}}
\newcommand{\gau}{\mathscr{K}}
\newcommand{\muf}{\mu}
\newcommand{\amc}{\mathcal{H}}
\newcommand{\gn}{N}
\newcommand{\std}{\mathbb{V}^{\ast}}
\newcommand{\Om}{\Omega}
\newcommand{\stw}{\mathbb{W}}
\newcommand{\imt}{\iota}
\newcommand{\A}{\mathscr{A}}
\newcommand{\U}{\mathsf{U}}
\newcommand{\lc}{\Sigma}
\renewcommand{\H}{\mathsf{H}}
\newcommand{\pol}{\mathsf{Pol}}
\newcommand{\om}{\omega}
\newcommand{\hess}{\operatorname{Hess}}
\newcommand{\adj}{\operatorname{adj}}
\newcommand{\vol}{\mathsf{vol}}
\newcommand{\ka}{\kappa}
\newcommand{\La}{\Lambda}
\newcommand{\T}{\mathcal{T}}
\renewcommand{\part}{\vdash}
\newcommand{\sff}{\Pi}
\newcommand{\dum}{\,\cdot\,\,}
\newcommand{\Ga}{\Gamma}
\newcommand{\nm}{\mathsf{W}}
\newcommand{\la}{\lambda}
\newcommand{\ep}{\epsilon}
\newcommand{\reat}{\mathbb{R}^{\times}}
\newcommand{\cinf}{C^{\infty}}
\newcommand{\eno}{\text{End}}
\newcommand{\si}{\sigma}
\newcommand{\pr}{\partial}
\newcommand{\sign}{\operatorname{sgn}}
\newcommand{\bnabla}{\bar{\nabla}}
\newcommand{\en}{[\nabla]}
\newcommand{\lie}{\mathfrak{L}}
\newcommand{\Aff}{\mathbb{Aff}}
\newcommand{\lb}{\langle}
\newcommand{\ra}{\rangle}
\newcommand{\ste}{\mathbb{V}}
\newcommand{\al}{\alpha}
\newcommand{\be}{\beta}
\newcommand{\ga}{\gamma}
\newcommand{\hnabla}{\widehat{\nabla}}
\newcommand{\tnabla}{\tilde{\nabla}}
\newcommand{\eul}{\mathbb{X}}
\newcommand{\tensor}{\otimes}
\newcommand{\rea}{\mathbb R}
\newcommand{\tr}{\operatorname{\mathsf{tr}}}
\newcommand{\ann}{\text{Ann}\,}
\renewcommand{\L}{\mathbb{L}}
\begin{document}
\title[Equiaffine geometry of level sets]{Equiaffine geometry of level sets and ruled hypersurfaces with equiaffine mean curvature zero}
\author{Daniel J.~F. Fox} 
\address{Escuela T\'ecnica Superior de Ingenier\'ia y Dise\~no Industrial\\ Universidad Polit\'ecnica de Madrid\\Ronda de Valencia 3\\ 28012 Madrid Espa\~na}
\email{daniel.fox@upm.es}


\begin{abstract}
Basic aspects of the equiaffine geometry of level sets are developed systematically. 
As an application there are constructed families of $2n$-dimensional nondegenerate hypersurfaces ruled by $n$-planes, having equiaffine mean curvature zero, and solving the affine normal flow. Each carries a symplectic structure with respect to which the ruling is Lagrangian. 
\end{abstract}

\maketitle

\section{Introduction}
The goal of this article is the construction of $2n$-dimensional equiaffine mean curvature hypersurfaces ruled by totally geodesic $n$-planes. To achieve this aim, basic aspects of the equiaffine geometry of level sets are developed systematically. In the process there are made some remarks about the equiaffine geometry of level sets and the construction of the affine normal distribution in spaces more general than flat affine space. 

The examples constructed generalize to higher dimensions equiaffine mean curvature zero ruled surfaces constructed by A. Mart\'inez and F. Milan in \cite{Martinez-Milan}. Precisely, Theorem \ref{lifttheorem} shows that, given $Q \in \cinf(M)$ and a centroaffine immersion $A:M \to \rea^{n+1}$ such that the pullback of the contraction of the radial Euler field with the volume form on $\rea^{n+1}$ is a nonzero constant multiple of the volume form $du^{1} \wedge \dots \wedge du^{n}$ for some flat affine coordinates on $M$, then, with respect to an appropriate equiaffine structure on $M \times \rea^{n+1\,\ast}$, the level sets of the function $F:M \times \rea^{n+1\,\ast} \to \rea$ defined by $F(u, x) = \lb A(u), x\ra + Q(u)$ are equiaffine mean curvature zero hypersurfaces foliated by $n$-dimensional affine planes isotropic with respect to the equiaffine metric. Moreover each level set carries a symplectic structure with respect to which the leaves of the ruling are Lagrangian. By Lemma \ref{affinespherelemma}, in general a level set of $F$ is not an improper affine sphere, for this is the case if and only if the image of $A$ is contained in a hyperplane. Finally, there is an explicit smooth map $\varphi:\rea \times \rea^{2n} \to M \times \rea^{n+1\,}$ such that $F(\varphi(t, p))$ is a constant multiple of $t$, so that $\varphi(t,\dum)$ parameterizes part of a level set of $F$, and that solves the affine normal flow in the sense that $\tfrac{d}{dt}\varphi(t, p)$ is equal to the affine normal $\nm_{\varphi(t, p)}$ at $p$ to the image $\varphi(t,\rea^{2n})$.

In the case $n = 2$, taking $Q = 0$ and taking as the components of $A$ linearly independent solutions of a homogeneous linear second order differential equation yields examples in \cite{Martinez-Milan}. The prototypical example is a level set $\Sigma_{t} = \{(u, x, y): F(u,x , y) = t\}$ of $F(u,x,y) = x \sin u + y \cos u$. The equiaffine normal is $-\sin u \pr_{x} - \cos u \pr_{y}$, the symplectic form is the restriction of $du \wedge (\cos u dx - \sin u dy)$, and the ruling is generated by $\cos u \pr_{x} - \sin u \pr_{y}$. The level set $\Sigma_{t}$ is a helicoid with the parameterization $\{(r, s\sin r + t\cos r, -s\cos r + t \sin r):(r, s) \in \rea^{2}\}$.

In section \ref{zamcsection}, several recipes for explicitly constructing centroaffine immersions with the necessary properties are given. A typical example obtained from these constructions is that, for any smooth function $Q(u, v)$ of two variables, the level sets in $\rea^{5}$ of the function
\begin{align}\label{genhel}
\begin{split}
F(&x_{1}, x_{2}, x_{3}, x_{4}, x_{5}) \\
&= x_{3}\sech{x_{1}}\cos( x_{2}\cosh^{2}{x_{1}}) + x_{4} \sech{x_{1}} \sin( x_{2}\cosh^{2}{x_{1}}) + x_{5}\tanh{x_{1}} + Q(x_{1}, x_{2})
\end{split}
\end{align} 
are smoothly immersed equiaffine mean curvature zero hypersurfaces ruled by $2$-planes. 

In flat affine space any two parallel metrics of the same signature are affinely equivalent. While the Gauss-Kronecker curvature of a hypersurface with respect to different parallel metrics changes, the condition that it vanish or not does not; a hypersurface is nondegenerate if its Gauss-Kronecker curvature is everywhere nonzero with respect to any parallel metric (equivalently, the second fundamental form is nondegenerate), and similarly, it makes sense to speak of a hypersurface with zero Gauss-Kronecker curvature with respect to any parallel metric (a \textit{developable} hypersurface). A \textit{ruling} of a hypersurface by $k$-planes is a foliation by totally geodesic $k$-dimensional submanifolds, and a hypersurface equipped with a ruling is said to be \textit{ruled}. A ruling is \textit{cylindrical} if its leaves are parallel planes. A cylindrical ruling is developable because the vectors tangent to the ruling lie in the radical of the second fundamental form. However, there exist developable hypersurfaces that are not cylindrical. 
By construction the level sets of the the functions $F$ of Section \ref{examplesection} are ruled but not developable. On the other hand, the graph of $F$ is a smoothly immersed noncylindrical developable (Gauss-Kronecker curvature zero) hypersurface.

Because the plane tangent to a ruling is isotropic with respect to the second fundamental form, the second fundamental form of a ruled hypersurface has indefinite signature. If the hypersurface is nondegenerate then the rank of a ruling can be no larger than the maximum possible dimension of an isotropic subspace. For a $2n$-dimensional hypersurface, the maximum possible dimension of an isotropic subspace is $n$, and so the maximal rank of a ruling is $n$, and a nondegenerate hypersurface admitting such a ruling necessarily has split signature second fundamental form. The examples constructed here are maximally ruled in the sense that they carry rulings with the maximal possible rank. It would be interesting to know if there are maximally ruled equiaffine mean curvature hypersurfaces that are not equivalent to these examples.

The hypersurfaces constructed are realized as level sets, and so it is necessary to record, in Section \ref{prelimsection}, some facts and formulas related to the equiaffine geometry of level sets that are needed for the proofs, as this material is hard to find in the literature. A formula for the equiaffine normal of a level set of $F$ is given in J. Hao and H. Shima's \cite{Hao-Shima} under the condition that the Hessian $F_{ij}$ of $F$ be nondegenerate, which is too restrictive in applications. In particular, in the examples here, $F_{ij}$ has corank one everywhere. Fortunately, the formulas in \cite{Hao-Shima} remain valid under less restrictive hypotheses, provided they are properly interpreted. Let $U^{ij}$ be the adjugate tensor of the Hessian $F_{ij}$ and let $\U(F) = U^{ij}F_{i}F_{j}$, where $F_{i}$ is the differential of $f$. Then the level set of $F$ containing the regular point $p$ is nondegenerate at $p$ if and only if $\U(F)$ is not zero at $p$. This observation is due to R. Reilly in \cite{Reilly-affine}. Since the claim and the paper \cite{Reilly-affine} seem little known, the proof is reviewed here. More generally, there are derived in terms of $U^{ij}$ and $\U(F)$ formulas for the equiaffine normal and equiaffine metric of a level set valid under the hypothesis that $\U(F)$ not vanish. Actually, slightly more is obtained, in that there is constructed from $F$ a vector field $\nm$ that along each level set of $F$ agrees with the equiaffine normal of the level set. The differential operator associating wtih $F$ the vector field $\nm$ is invariant under orientation-preserving external reparameterizations (meaning replacing $F$ by $\Psi\circ F$ for an orientation-preserving diffeomorphism $\Psi$), equivariant (in a particular sense) with respect to the action on functions of the group of affine transformation, and invariant with respect to the action on functions of the group of equiaffine transformations. A further condition is necessary to determine this operator uniquely. See section \ref{prelimsection} for further discussion.

The examples constructed in Section \ref{examplesection} are the level sets of a function $F$ satisfying that its Hessian has corank one and $\U(F)$ is equal to a constant everywhere. The complete description of the solutions of these equations seems an interesting problem on its own. The examples described here show that examples abound, although the equiaffine metrics of the examples have split signature. This should be compared with the situation for convex hypersurfaces with equiaffine mean curvature zero. In \cite{Trudinger-Wang-bernstein}, N. Trudinger and X.~J. Wang have shown the validity of the affine Bernstein conjecture, that a locally uniformly convex hypersurface in $\rea^{3}$ that is complete in the equiaffine metric and has equiaffine mean curvature zero is an elliptic paraboloid. Moreover, in \cite{Trudinger-Wang-survey} they conjecture that the same result should hold for hypersurfaces in $\rea^{n}$ for $n \leq 10$.

That the normal obtained in Section \ref{prelimsection} is the usual equiaffine normal is proved in Theorem \ref{affinenormalsametheorem}. The proof uses a definition of the affine normal distribution of a nondegenerate hypersurface in a manifold with affine connection (not necessarily flat) that was first given in \cite{Fox-ahs}. This derivation is based on requiring a compatibility condition between the connection induced via a transverse line field and the conformal structure determined by the second fundamental form. The precise statement is Theorem \ref{projnormaltheorem}.

One consequence is that it makes sense to speak of the equiaffine normal of a nondegenerate hypersurface in a pseudo-Riemannian space form. Section \ref{comparisonsection} examines briefly the question of the relation between this equiaffine normal and the pseudo-Riemannian unimodular normal. Theorem \ref{normalstheorem} shows that these are proportional if and only if the hypersurface has (nonzero) constant Gauss-Kronecker curvature, and that in this case the equiaffine mean curvature is a constant multiple of the pseudo-Riemannian mean curvature. In particular, a hypersurface of nonzero constant Gauss-Kronecker curvature and constant pseudo-Riemannian mean curvature has constant equiaffine mean curvature. For example, a nondegenerate isoparametric hypersurface has constant equiaffine mean curvature. This raises the question of under what conditions a constant equiaffine mean curvature hypersurface in a space form must be isoparametric.

\section{Equiaffine geometry of level sets}\label{prelimsection}
Let $\hnabla$ be a torsion-free affine connection on the $(n+1)$-dimensional manifold $M$. The \textit{second fundamental form} of a co-orientable immersed hypersurface $\Sigma$ in $M$ with respect to $\hnabla$ is the normal bundle valued symmetric covariant two-tensor on $\Sigma$ equal, when evaluated on vector fields $X$ and $Y$ tangent to $\Sigma$, to the projection of $\hnabla_{X}Y$ onto the normal bundle of $\Sigma$.
The hypersurface $\Sigma$ is \textit{nondegenerate (at a point $p \in \Sigma$)} if its second fundamental form is nondegenerate everywhere (at $p$). A vector field $N$ transverse to $\Sigma$ determines a splitting of the pullback of $TM$ over $\Sigma$ as the direct sum of $T\Sigma$ and the span of $N$. Via this splitting, the connection $\hnabla$ induces on $\Sigma$ a connection $\nabla$, while via $N$, the second fundamental form is represented by a symmetric covariant two tensor $h$ on $\Sigma$. In particular, $\Sigma$ is nondegenerate if and only if $h$ is nondegenerate, and this condition does not depend on the choice of the transversal $N$. For vector fields $X$ and $Y$ tangent to $\Sigma$, the connection $\nabla$, the tensor $h$, the \textit{shape operator} $S \in \Ga(\eno(T\Sigma))$, and the \textit{connection one-form} $\tau \in \Ga(T^{\ast}\Sigma)$ are determined by the relations
\begin{align}\label{induced}
&\hnabla_{X}Y = \nabla_{X}Y + h(X, Y)N,& &\hnabla_{X}N = -S(X) + \tau(X)N,
\end{align}
where here, as in what follows, notation indicating the restriction to $\Sigma$, the immersion, the pullback of $TM$, etc. is omitted to improve readability, and $\Ga(E)$ is the space of smooth sections of the vector bundle $E$. By \eqref{induced}, for $X$ tangent to $\Sigma$, and any volume form $\Psi$ on $M$,
\begin{align}\label{nablavolume}
\nabla_{X}(\imt(N)\Psi)  = \imt(N)\hnabla_{X}\Psi + \tau(X)\imt(N)\Psi.
\end{align}
By \eqref{nablavolume}, if $\hnabla \Psi = 0$, then, along $\Sigma$, $\tau$ is determined by $\tau = (\imt(N)\Psi)^{-1}\nabla(\imt(N)\Psi)$.
The \textit{mean curvature of $\Sigma$ with respect to $N$ and $\hnabla$} is $n^{-1}\tr S$.

\subsection{Nondegeneracy of level sets}\label{ufsection}
Let $M = \rea^{n+1}$ and let $\hnabla$ be the standard flat affine connection on $\rea^{n+1}$.
Fix a $\hnabla$-parallel volume form $\Psi$. The group $\Aff(n+1, \rea)$ of affine transformations of $\rea^{n+1}$ comprises the automorphisms of $\hnabla$. Elements of its subgroup preserving $\Psi^{2}$ are called \textit{unimodular} or \textit{equiaffine}. Let $\Om \subset \rea^{n+1}$ be an open domain. For $F \in C^{\infty}(\Om)$ let $F_{i_{1}\dots i_{k}} = \hnabla_{i_{1}}\dots\hnabla_{i_{k-1}}dF_{i_{k}}$, and let $F_{ij} = (\hess F)_{ij} = \hnabla_{i}dF_{j}$ be the \textit{Hessian} of $F$. Here, as generally in what follows, the abstract index and summation conventions are employed (the reader unfamiliar with these conventions can consult chapter $2$ of \cite{Penrose-Rindler}). As $\det \hess F$ and the tensor square $\Psi^{2}$ are $2$-densities, it makes sense to define the \textit{Hessian determinant} $\H(F)$ of the at least twice differentiable function $F$ by $\det \hess F = \H(F)\Psi^{2}$. By \textit{affine coordinates} are meant smooth functions $x^{1}, \dots, x^{n+1}$ such that the differentials $dx^{1}, \dots, dx^{n+1}$ are linearly independent and constitute a $\hnabla$-parallel coframe; these coordinates are \textit{equiaffine} if, moreover, $\Psi = dx^{1}\wedge \dots \wedge dx^{n+1}$. In equiaffine coordinates, $\H(F) = \det \tfrac{\pr^{2}F}{\pr x^{i}\pr x^{j}}$. 

Formally the adjugate tensor of a symmetric covariant two-tensor $F_{ij}$ is a $2$-density valued symmetric contravariant two-tensor $\bar{U}^{ij}$ satisfying $\bar{U}^{ip}F_{pj} = (\det \hess F) \delta_{j}\,^{i}$. Here, instead, the  symmetric contravariant two-tensor $U^{ij}$ defined by tensoring $\bar{U}^{ij}$ with $\Psi^{-2}$ will be called the \textit{adjugate tensor} of $F_{ij}$. Its characteristic property is $U^{ip}F_{pj} = \H(F)\delta_{j}\,^{i}$. Where $\H(F)$ is nonzero, $F_{ij}$ is a pseudo-Riemannian metric with inverse symmetric bivector $F^{ij}$, and there hold $F^{ij} = \H(F)^{-1}U^{ij}$ and $\H(F)^{-1}U^{ip}F_{p} = F^{ij}F_{j}$. Like $U^{ij}$, the vector field $N^{i} = U^{ip}F_{p}$ and the function $\U(F) = N^{i}F_{i} = U^{ij}F_{i}F_{j}$ are defined even when $F_{ij}$ degenerates. 
The adjugate transformation of an endomorphism of an $r$-dimensional vector space has rank $r$, $1$, or $0$ as the original transformation has rank $r$, $r-1$, or less than $r-1$. Consequently, if $\H(F)$ vanishes then $U^{ij}$ has rank $1$ or $0$ as $F_{ij}$ has rank $n$ or rank less than $n$. When $U^{ij}$ has rank $1$, something more precise can be said; see Lemma \ref{unondegenlemma} below.

Applying the Cauchy determinantal identity (Equation (19) in \cite{Cauchy}),
\begin{align}\label{adjid}
\begin{vmatrix} A & b \\ c^{t} & d\end{vmatrix} = (\det A)d -c^{t}(\adj A)b,
\end{align}
where $A$ is a matrix with adjugate matrix $\adj A$, $b$ and $c$ are column vectors, and $d \in \rea$, yields the identity
\begin{align}\label{rankone}
\begin{split}
\begin{vmatrix} A + bc^{t}\end{vmatrix} & = \begin{vmatrix} A + bc^{t} & b \\ 0 & 1\end{vmatrix} = \begin{vmatrix} A & b \\ -c^{t} & 1\end{vmatrix}
= |A| + c^{t}(\adj A)b.
\end{split}
\end{align}
Applying \eqref{adjid} and \eqref{rankone} to tensors yields 
\begin{align}
\label{ufdet}
&\U(F) = U^{ij}F_{i}F_{j} = -\begin{vmatrix} F_{ij} & F_{i} \\ F_{j} & 0 \end{vmatrix},& \\
\label{fdet}
&\det(F_{ij} + qF_{i}F_{j}) = (\H(F) + q\U(F))\Psi^{2}.
\end{align}
for any smooth function $q$. Notation is abused in \eqref{ufdet} in that a covariant tensor is apparently identified with an endomorphism; if $F_{i}$ and $F_{ij}$ are interpreted as the components of $dF$ and $\hnabla dF$ with respect to equiaffine coordinates the formulas make rigorous sense, and this justifies their use generally. Alternatively the matricial notation can be understood as an abstract notational device like the abstract index conventions. 

\begin{lemma}\label{hgflemma}
For $F \in \cinf(\rea^{n+1})$ and $g \in \Aff(n+1, \rea)$ define $(g \cdot F)(x) = F(g^{-1}x)$. Let $\ell:\Aff(n+1, \rea) \to GL(n+1, \rea)$ be the projection onto the linear part. Then
\begin{align}\label{hgf}
&g\cdot \H(F) = \det{}^{2}\ell(g) \H(g\cdot F),& &g\cdot \U(F) = \det{}^{2} \ell(g) \U(g\cdot F),
\end{align}
\end{lemma}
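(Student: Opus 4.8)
The plan is to verify \eqref{hgf} by a direct change-of-variables computation, using the definitions $\det\hess F = \H(F)\Psi^{2}$ and $\U(F) = U^{ij}F_iF_j = -\begin{vmatrix} F_{ij} & F_i \\ F_j & 0\end{vmatrix}$ from \eqref{ufdet}, together with the chain rule for the action $(g\cdot F)(x) = F(g^{-1}x)$. Since both $\H$ and $\U$ are defined by determinants of the Hessian (possibly bordered), and the Hessian transforms tensorially under the linear part $\ell(g)$ while being insensitive to the translational part, the identity should come out of the multiplicativity of determinants and the way a determinant scales under a linear substitution. I would first reduce to the case that $g$ is purely linear, noting that translations act trivially on $\hess F$ and on $dF$ up to the same translation of the base point, hence trivially on $\H$ and $\U$; so it suffices to treat $g \in GL(n+1,\rea)$, in which case $\ell(g) = g$.

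For $g$ linear, write $y = g^{-1}x$, so $(g\cdot F)(x) = F(y)$. Differentiating, $\partial_{x^i}(g\cdot F) = (g^{-1})^k{}_i\, F_k(y)$ and $\partial_{x^i}\partial_{x^j}(g\cdot F) = (g^{-1})^k{}_i (g^{-1})^l{}_j\, F_{kl}(y)$. Hence in equiaffine coordinates the Hessian matrix of $g\cdot F$ at $x$ equals $(g^{-1})^{t}\,[\hess F](y)\,g^{-1}$, and taking determinants gives $\H(g\cdot F)(x) = \det{}^{-2}(g)\,\H(F)(y) = \det{}^{-2}\ell(g)\,(g\cdot \H(F))(x)$, which is the first identity in \eqref{hgf} after rearranging. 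For the second identity I would apply the same substitution to the bordered determinant in \eqref{ufdet}: the bordered matrix $\begin{pmatrix} F_{ij} & F_i \\ F_j & 0\end{pmatrix}$ at $y$ conjugates under $\mathrm{diag}((g^{-1})^{t}, 1)$ on the left and $\mathrm{diag}(g^{-1}, 1)$ on the right to the corresponding bordered matrix for $g\cdot F$ at $x$ --- here one uses that the border row and column are exactly $F_j(y)$ and $F_i(y)$, which transform by a single copy of $g^{-1}$, matching the $(n+1,n+1)$-block being $0$. Taking determinants again produces the factor $\det{}^{-2}(g)$, giving $\U(g\cdot F)(x) = \det{}^{-2}\ell(g)\,(g\cdot \U(F))(x)$, as claimed.

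The computation is essentially routine; the one place to be careful is the bookkeeping of whether the scaling factor is $\det\ell(g)$ or $\det\ell(g)^{-1}$, i.e.\ making sure the statement \eqref{hgf} as written (with $g\cdot\H(F) = \det{}^{2}\ell(g)\,\H(g\cdot F)$, equivalently $\H(g\cdot F) = \det{}^{-2}\ell(g)\,g\cdot\H(F)$) is consistent with the convention $(g\cdot F)(x) = F(g^{-1}x)$ and with the fact that $\Psi^{2}$ is being treated as a fixed parallel $2$-density rather than being transported by $g$. A clean way to sidestep sign-of-exponent errors is to observe that $\H$ and $\U$ are, by construction, the coefficients expressing the $2$-densities $\det\hess F$ and $-\begin{vmatrix} F_{ij} & F_i \\ F_j & 0\end{vmatrix}$ relative to $\Psi^{2}$, and that these two $2$-densities are manifestly natural under $\Aff(n+1,\rea)$ (they are built functorially from $F$ and $\hnabla$); since $\Psi$ is $\hnabla$-parallel, $g\cdot\Psi = \det\ell(g)\,\Psi$, so pulling the density identity back by $g$ and re-expressing in terms of $\Psi^{2}$ transfers exactly one factor of $\det{}^{2}\ell(g)$, yielding \eqref{hgf}. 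I expect no real obstacle beyond this sign/exponent check.
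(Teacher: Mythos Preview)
Your proposal is correct and follows essentially the same approach as the paper: compute how $F_i$ and $F_{ij}$ transform under the chain rule (the paper writes this as $\ell(g)_{i}{}^{j}(g\cdot F)_{j}(x) = F_{i}(g^{-1}x)$ and $\ell(g)_{i}{}^{a}\ell(g)_{j}{}^{b}(g\cdot F)_{ab}(x) = F_{ij}(g^{-1}x)$), then take the determinant for $\H$ and substitute into the bordered-determinant identity \eqref{ufdet} for $\U$. Your explicit reduction to the linear case and the block-diagonal conjugation of the bordered matrix are just a more verbose unpacking of what the paper compresses into two sentences.
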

\begin{proof}
There hold $\ell(g)_{i}\,^{j}(g\cdot F)_{j}(x) = F_{i}(g^{-1}x)$ and $\ell(g)_{i}\,^{a}\ell(g)_{j}\,^{b}(g \cdot F)_{ab}(x) = F_{ij}(g^{-1}x)$. Taking the determinant of the last yields the first equality of \eqref{hgf}, while substituting both into \eqref{ufdet} yields the second identity of \eqref{hgf}.
\end{proof}
\begin{remark}
Let $\Phi:\rea^{n+1} \to \rea^{n+1}$ be a linear fractional transformation, so that $\Phi(x)^{i} = (c_{p}x^{p} + d)^{-1}(A_{q}\,^{i}x^{q} + b^{i})$. Then $\Phi_{j}\,^{i} = \tfrac{\pr}{\pr x^{j}}\Phi^{i}$ and $\Phi_{ij}\,^{k} = \tfrac{\pr^{2}}{\pr x^{i}\pr x^{j}}\Phi^{k}$ satisfy $\Phi_{jk}\,^{i} = -\Phi_{j}\,^{i}c_{k} - \Phi_{k}\,^{i}c_{j}$. By \eqref{ufdet},
\begin{align}\label{ufproj}
\begin{split}
-\U(F\circ \Phi) & = \begin{vmatrix} F_{pq}\Phi_{i}\,^{p}\Phi_{j}\,^{q} + F_{p}\Phi_{ij}\,^{p} & F_{p}\Phi_{i}\,^{p} \\ F_{q}\Phi_{j}\,^{q} & 0 \end{vmatrix} =   \begin{vmatrix} F_{pq}\Phi_{i}\,^{p}\Phi_{j}\,^{q} - 2F_{p}\Phi_{(i}\,^{p}c_{j)} & F_{p}\Phi_{i}\,^{p} \\ F_{q}\Phi_{j}\,^{q} & 0 \end{vmatrix}\\& = \begin{vmatrix} F_{pq}\Phi_{i}\,^{p}\Phi_{j}\,^{q}  & F_{p}\Phi_{i}\,^{p} \\ F_{q}\Phi_{j}\,^{q} & 0 \end{vmatrix} = -(\det T\Phi)^{2}\U(F)\circ \Phi.
\end{split}
\end{align}
The identity \eqref{ufproj} shows that $\U(F)$ transforms equivariantly under precomposition with projective transformations, and yields the second identity of \eqref{hgf} as a special case. This projective covariance of $\U(F)$ is a reason for paying special attention to this quantity.
\end{remark}

A point $p \in \Om$ is a \textit{regular point} of $F$ if $dF$ is not zero at $p$. Because the set of regular points in $\Om$ is open, the level sets of the restriction of $F$ to a sufficiently small neighborhood of a regular point of $F$ are smoothly immersed submanifolds. It makes sense to say that the level set of $F$ containing $p$ is degenerate or nondegenerate at $p$ because the part of this level set contained in a sufficiently small neighborhood of $p$ is smoothly immersed. The \textit{level set of $F$ containing $p$} means $\{x \in \Om: F(x) = F(p)\}$.

A \textit{Euclidean metric} on the flat equiaffine space $(\rea^{n+1}, \Psi, \hnabla)$ means a $\hnabla$-parallel Riemannian metric the volume form of which equals $\Psi$.
Lemma \ref{ufgausslemma} gives a geometric interpretation of $\U(F)$ in terms of the Gauss-Kronecker curvature $\gau$ of a level set of $F$ with respect to a Euclidean metric. 

\begin{lemma}\label{ufgausslemma}
Let $\delta_{ij}$  and $\delta^{ij}$ be a Euclidean metric and its inverse on $\rea^{n+1}$. 
Let $F$ be a $\cinf$ function defined on an open subset $\Om \subset \rea^{n+1}$. At a regular point $p \in \Om$ of $F$, the Gauss-Kronecker curvature $\gau$ with respect to the Euclidean unit normal vector $-|dF|_{\delta}^{-1}\delta^{ip}F_{p}$ of the smoothly immersed level set of $F$ passing through $p$ satisfies
\begin{align}\label{ufgauss}
\U(F) = \gau|dF|^{n+2}_{\delta}.
\end{align}
\end{lemma}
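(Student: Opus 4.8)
The plan is to evaluate both sides of \eqref{ufgauss} at the regular point $p$ after passing to convenient Euclidean coordinates. Since any change of equiaffine coordinates preserving $\delta_{ij}$ amounts to an element $g$ of $O(n+1, \rea)$ together with a translation, and since $\det \ell(g) = \pm 1$ for such $g$, Lemma \ref{hgflemma} shows that $\U(F)(p)$ does not depend on the choice of Euclidean coordinates; neither do $|dF|_{\delta}(p)$ nor the Gauss--Kronecker curvature $\gau$ of the level set, which are manifestly geometric. Hence it suffices to verify the identity at $p$ after translating $p$ to the origin and rotating so that, writing $\lambda := |dF|_{\delta}(p) > 0$, one has $F_{i}(p) = 0$ for $1 \le i \le n$ and $F_{n+1}(p) = \lambda$ (the sign is arranged, if necessary, by the reflection $x^{n+1} \mapsto -x^{n+1} \in O(n+1, \rea)$). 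In these coordinates $\delta$ is the standard inner product and $\Psi$ the standard volume form, so $\H(F) = \det(F_{ij})$ and $U^{ij}$ is numerically the classical adjugate matrix of the Hessian $(F_{ij})$, while $\nabla F(p) = \lambda\,\del_{x^{n+1}}$ and $T_{p}\Sigma = \mathrm{span}(\del_{x^{1}}, \dots, \del_{x^{n}})$.

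Next I would compute the shape operator $S$ of the level set $\Sigma$ at $p$ with respect to the prescribed unit normal $\nu = -|dF|_{\delta}^{-1}\,\delta^{ip}F_{p}$, extended to a neighbourhood of $p$. Because $|\nu|_{\delta} \equiv 1$, the field $\hnabla_{X}\nu$ is $\delta$-orthogonal to $\nu$; since $\nu$ spans the $\delta$-orthogonal complement of $T\Sigma$, it follows that $\hnabla_{X}\nu$ is tangent to $\Sigma$, so by \eqref{induced} the connection one-form $\tau$ of $\nu$ vanishes and $S(X) = -\hnabla_{X}\nu$ with no further projection needed. Differentiating $\nu = -|dF|_{\delta}^{-1}\,\nabla F$ along a direction $X$ tangent to $\Sigma$, the term in which $|dF|_{\delta}^{-1}$ is differentiated is, at $p$, a multiple of $\nabla F(p) = \lambda\,\del_{x^{n+1}}$ and hence normal; consequently $S(X) = \lambda^{-1}\,\Pi_{T_{p}\Sigma}\bigl(\hess F(p)\cdot X\bigr)$, where $\Pi_{T_{p}\Sigma}$ is $\delta$-orthogonal projection and $\hess F(p)$ acts as the matrix $(F_{ij}(p))$. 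Taking $X = \del_{x^{j}}$ for $1 \le j \le n$ shows that the matrix of $S$ in the basis $\del_{x^{1}}, \dots, \del_{x^{n}}$ of $T_{p}\Sigma$ is $\lambda^{-1}\bigl(F_{ij}(p)\bigr)_{1 \le i, j \le n}$, so $\gau(p) = \det S = \lambda^{-n}\,M$, where $M$ denotes the determinant of the block of $(F_{ij}(p))_{1 \le i, j \le n+1}$ obtained by deleting the last row and column.

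It remains to evaluate $\U(F)(p) = U^{ij}(p)F_{i}(p)F_{j}(p)$. Since $F_{i}(p) = 0$ for $i \le n$ and $F_{n+1}(p) = \lambda$, only one term survives: $\U(F)(p) = U^{n+1,n+1}(p)\,\lambda^{2}$. The entry $U^{n+1,n+1}(p)$ is the $(n+1,n+1)$ cofactor of $(F_{ij}(p))$, which coincides with the minor $M$ because the sign $(-1)^{(n+1)+(n+1)}$ is $+1$. Combining, $\U(F)(p) = M\lambda^{2} = \bigl(\lambda^{n}\gau(p)\bigr)\lambda^{2} = \gau(p)\,|dF|_{\delta}(p)^{\,n+2}$, which is \eqref{ufgauss}. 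I expect the only delicate points to be a matter of bookkeeping: carrying the prescribed sign of $\nu$ through consistently (as a check, for $F = \tfrac12\delta_{ij}x^{i}x^{j}$ this $\nu$ is the inward normal of a round sphere and the formula returns the positive value $\gau|dF|_{\delta}^{n+2} = |dF|_{\delta}^{2} = \U(F)$, consistent with $\U(F) > 0$ for convex $F$), and confirming that the orthogonal-group reduction legitimately permits normalizing $F_{n+1}(p)$ to $+\lambda$ rather than $-\lambda$, which it does via the reflection noted above.
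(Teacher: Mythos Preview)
Your argument is correct, and it takes a genuinely different route from the paper's. The paper works coordinate-free: it writes down an ambient tensor $\Pi_{ij}$ that extends the second fundamental form and satisfies $E^{i}\Pi_{ij}=0$, augments it to a nondegenerate tensor $\Lambda_{ij}=\Pi_{ij}+E_iE_j$ whose determinant relative to $\Psi^{2}$ is $\gau$, and then reaches \eqref{ufgauss} by a chain of bordered-determinant row/column operations that reduce $\det\Lambda$ to the block form recognised as $\U(F)$ via \eqref{ufdet}. Your approach instead uses the equivariance from Lemma~\ref{hgflemma} to place $p$ at the origin with $dF(p)$ along the last coordinate axis, so that both $\gau$ and $\U(F)$ become the same principal minor $M$ of the Hessian, up to the explicit powers of $\lambda=|dF|_{\delta}$.

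The paper's computation has the advantage of being pointwise-free and yielding, as a byproduct, the useful intermediate tensor $\Pi_{ij}$ of \eqref{ufpi}; your approach is more elementary and makes the identity transparent as ``$\U(F)$ picks out the cofactor complementary to $\nabla F$,'' at the cost of invoking an orthogonal frame adapted to $p$. Both are short; yours leans on Lemma~\ref{hgflemma}, while the paper's leans on the Cauchy identity \eqref{adjid}.
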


\begin{proof}
Write $E^{i} = -|dF|_{\delta}^{-1}\delta^{ip}F_{p}$ and $E_{i} = E^{p}\delta_{ip} = - |dF|_{\delta}^{-1}F_{i}$. The tensor 
\begin{align}\label{ufpi}
\begin{split}
\Pi_{ij} &=  |dF|^{-1}_{\delta}\left(F_{ij} - 2E^{p}F_{p(i}E_{j)} + E^{p}E^{q}F_{pq}E_{i}E_{j}\right)
\end{split}
\end{align}
satisfies $E^{i}\Pi_{ij} = 0$ and its restriction to the tangent space to a level set of $F$ is the representative of the second fundamental form of the level set with respect to $E^{i}$. The tensor $\La_{ij} = \Pi_{ij} + E_{i}E_{j}$ is nondegenerate and its determinant satisfies $(\det \La)/\Psi^{2} = (\det \Pi)/(\imt(E)\Psi)^{2} = \gau$ where $\det \Pi$ means the determinant of the restriction of $\Pi$ to a tangent space of a level set of $F$, and $\gau$ is the Gauss-Kronecker curvature with respect to $E$ ($\gau$ is defined by the preceding relation). Elementary operations with determinants coupled with \eqref{ufpi} and \eqref{ufdet} yield (abusing notation as in \eqref{ufdet})
\begin{align}\label{ufg1}
\begin{split}
-\gau & = \begin{vmatrix} \La_{ij} & E_{j}\\ 0 & -1 \end{vmatrix} =  \begin{vmatrix} \Pi_{ij} & E_{j}\\ E_{i} & -1 \end{vmatrix}  =  \begin{vmatrix} \Pi_{ij} & E_{j}\\ E_{i} & 0 \end{vmatrix} = \begin{vmatrix} |dF|_{\delta}^{-1}F_{ij} & E_{j}\\ E_{i} & 0 \end{vmatrix} = -|dF|_{\delta}^{-n-2}\U(F),
\end{split}
\end{align}
from which \eqref{ufgauss} follows. 
\end{proof}

\begin{corollary}[Corollary of Lemma \ref{nondegenlemma}]\label{gkcorollary}
Let $F$ be a $\cinf$ function defined on an open subset $\Om \subset \rea^{n+1}$ and satisfying $\U(F) = 0$ on $\Om$. If $r \in \rea$ is a regular value of $F$ and the level set $\lc_{r}(F, \Om) = \{x \in \Om: F(x) = r\}$ is nonempty, then $\lc_{r}(F, \Om)$ is a smoothly immersed hypersurface of Gauss-Kronecker curvature zero.
\end{corollary}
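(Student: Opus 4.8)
The plan is to read the conclusion off Lemma \ref{ufgausslemma}, which already contains the substance: along a regular level set it identifies $\U(F)$ with $\gau$ times a positive power of $|dF|_{\delta}$, so that the vanishing of $\U(F)$ is equivalent to the vanishing of the Gauss-Kronecker curvature. Set $\Sigma := \lc_{r}(F,\Om)$ for the level set in question.

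First I would settle the smoothness. Since $r$ is a regular value, $dF$ is nonzero at every point of $\Sigma$, so every point of $\Sigma$ is a regular point of $F$; by the discussion of regular points preceding Lemma \ref{ufgausslemma}, the level set of $F$ through such a point is, in a neighbourhood of it, a smoothly immersed hypersurface, and since this holds at every point of $\Sigma$, the set $\Sigma$ is a smoothly immersed hypersurface of $\Om$. (It is in fact embedded and closed in $\Om$, but that is not needed.)

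Next, fix a Euclidean metric $\delta_{ij}$ on $\rea^{n+1}$ in the sense used in Lemma \ref{ufgausslemma}; such a metric exists, since a suitable constant rescaling of the standard $\hnabla$-parallel Riemannian metric has volume form $\Psi$. Along $\Sigma$ the function $|dF|_{\delta}$ is everywhere positive, so $-|dF|_{\delta}^{-1}\delta^{ip}F_{p}$ is a well-defined Euclidean unit normal field, and Lemma \ref{ufgausslemma} applies at each $p \in \Sigma$ to give $\U(F) = \gau\,|dF|_{\delta}^{n+2}$, where $\gau$ is the Gauss-Kronecker curvature of $\Sigma$ with respect to that normal. By hypothesis $\U(F) \equiv 0$ on $\Om$, while $|dF|_{\delta}^{n+2} > 0$ on $\Sigma$, so $\gau \equiv 0$ on $\Sigma$. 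As noted in the introduction, the vanishing of the Gauss-Kronecker curvature is independent of the choice of parallel metric (and, up to scaling, of the choice of transversal), so $\Sigma$ is a hypersurface of Gauss-Kronecker curvature zero.

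I do not expect any real obstacle; the work has been done in Lemma \ref{ufgausslemma} (equivalently, in Reilly's nondegeneracy criterion and the rank behaviour of the adjugate tensor). An equivalent route, matching the label ``Corollary of Lemma \ref{nondegenlemma}'', is to observe that $\U(F) \equiv 0$ forces the second fundamental form of $\Sigma$ to be degenerate at every point, whence its Gauss-Kronecker curvature --- a nonzero multiple of the determinant of the second fundamental form relative to the ambient metric --- vanishes identically. The only point meriting a sentence of care is the routine fact that the condition $\gau = 0$ is metric- and transversal-independent, which is precisely why the statement can be phrased with no metric fixed.
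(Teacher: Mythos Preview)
Your proof is correct and follows essentially the same route as the paper: invoke the identity $\U(F)=\gau\,|dF|_{\delta}^{n+2}$ from Lemma~\ref{ufgausslemma}, note that $|dF|_{\delta}>0$ along a regular level set, and conclude $\gau=0$. The paper's own proof is just the one-line version of this argument.
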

\begin{proof}
Since $r$ is regular, $|dF|_{\delta}$ does not vanish on $\lc_{r}(F, \Om)$ and the claim follows from \eqref{ufgauss}.
\end{proof}

\begin{lemma}\label{nondegenlemma}
Let $F$ be a $\cinf$ smooth function on a nonempty open subset $\Om \subset \rea^{n+1}$. 
For $p \in \Omega$ the following are equivalent.
\begin{enumerate}
\item\label{nd1} $\U(F)$ is not zero at $p$.
\item\label{nd2} $p$ is a regular point of $F$ and the Gauss-Kronecker curvature of $F$ at $p$ with respect to any Euclidean metric is nonzero.
\item\label{nd3} $p$ is a regular point of $F$ and the level set of $F$ containing $p$ is nondegenerate at $p$.
\item\label{nd4} $p$ is a regular point of $F$ and the restriction of the Hessian of $F$ to the level set of $F$ containing $p$ is nondegenerate at $p$. 
\end{enumerate}
\end{lemma}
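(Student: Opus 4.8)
The plan is to reduce everything to Lemma \ref{ufgausslemma} together with the explicit formula \eqref{ufpi} for the second fundamental form of a level set. First I would observe that conditions \eqref{nd2}, \eqref{nd3}, \eqref{nd4} all explicitly require $p$ to be a regular point of $F$, and that \eqref{nd1} forces this too: if $dF$ vanishes at $p$, then $F_{i} = 0$ there, hence $\U(F) = U^{ij}F_{i}F_{j} = 0$ at $p$. So it suffices to fix a regular point $p$ (so that, by the discussion preceding Lemma \ref{ufgausslemma}, the level set $\Sigma$ of $F$ through $p$ is a smoothly immersed hypersurface near $p$) and prove that the following are equivalent: $\U(F) \neq 0$ at $p$; the Gauss--Kronecker curvature $\gau$ of $\Sigma$ is nonzero at $p$ with respect to one (hence every) Euclidean metric; $\Sigma$ is nondegenerate at $p$; and the restriction of $\hess F$ to $T\Sigma$ is nondegenerate at $p$.

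For the first equivalence: since $p$ is regular, $|dF|_{\delta}$ is positive near $p$ for every Euclidean metric $\delta$, so Lemma \ref{ufgausslemma} gives $\U(F) = \gau\,|dF|_{\delta}^{n+2}$ with positive second factor, whence $\U(F)$ is nonzero at $p$ if and only if $\gau$ is. As $\U(F)$ does not depend on $\delta$, the nonvanishing of $\gau$ at $p$ for one Euclidean metric is equivalent to its nonvanishing for all of them.

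For the second equivalence I would invoke the construction inside the proof of Lemma \ref{ufgausslemma}: the restriction of the tensor $\Pi$ of \eqref{ufpi} to $T\Sigma$ is a representative of the second fundamental form of $\Sigma$ with respect to the Euclidean unit normal $E$, and $\gau$ equals $\det \Pi$ divided by $(\imt(E)\Psi)^{2}$ on $T\Sigma$. Since $E$ is transverse to $\Sigma$, the form $\imt(E)\Psi$ restricts to a volume form on $T\Sigma$, so $\gau$ is nonzero at $p$ precisely when $\Pi|_{T\Sigma}$ is nondegenerate there; and, since nondegeneracy of the second fundamental form does not depend on the choice of transversal (as remarked after \eqref{induced}), this is precisely nondegeneracy of $\Sigma$ at $p$.

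The remaining equivalence rests on the pointwise identity $\hess F|_{T\Sigma} = -dF(N)\,h$ for any transversal $N$ of $\Sigma$, where $h$ is the corresponding representative of the second fundamental form, and I expect this short computation to be the main point. To obtain it, take vector fields $X, Y$ tangent to $\Sigma$; since $F$ is constant along $\Sigma$, the functions $dF(X)$ and $dF(Y)$ vanish along $\Sigma$, as do $X(dF(Y))$ (the derivative along $\Sigma$ of a function vanishing on $\Sigma$) and $dF(\nabla_{X}Y)$ (because $\nabla_{X}Y$ is tangent to $\Sigma$). Hence, using \eqref{induced}, along $\Sigma$
\begin{align*}
\hess F(X, Y) = \hnabla dF(X, Y) = X\bigl(dF(Y)\bigr) - dF(\hnabla_{X}Y) = -dF\bigl(\nabla_{X}Y + h(X, Y)N\bigr) = -h(X, Y)\,dF(N).
\end{align*}
Because $N$ is transverse to $\Sigma$ and $\ker dF_{p} = T_{p}\Sigma$, the scalar $dF(N)$ is nonzero at $p$, so $\hess F|_{T\Sigma}$ and $h$ degenerate together at $p$. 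This closes the chain of equivalences. The only delicate bookkeeping is keeping straight the ``one versus every Euclidean metric'' point in the first step and the independence of nondegeneracy from the transversal in the second; the rest is a direct consequence of Lemma \ref{ufgausslemma} and \eqref{ufpi}.
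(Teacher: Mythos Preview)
Your proof is correct and follows essentially the same route as the paper's own argument: Lemma \ref{ufgausslemma} for \eqref{nd1}$\Leftrightarrow$\eqref{nd2}, the relation between Gauss--Kronecker curvature and nondegeneracy of $\Pi$ for \eqref{nd2}$\Leftrightarrow$\eqref{nd3}, and the computation $\hess F(X,Y) = -h(X,Y)\,dF(N)$ for \eqref{nd3}$\Leftrightarrow$\eqref{nd4}. Your explicit observation that $\U(F)(p)\neq 0$ forces $dF_{p}\neq 0$ is a useful addition that the paper leaves implicit.
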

\begin{proof}
The equivalence of \eqref{nd1} and \eqref{nd2} is immediate from \eqref{ufgauss}. The equivalence of \eqref{nd2} and \eqref{nd3} results from the observations that the shape operator of a Riemannian metric is invertible if and only if the representative of the second fundamental form associated with a unit normal vector field is nondegenerate and that the second fundamental form is nondegenerate if and only if its representative with respect to any transversal is nondegenerate. There remains to show the equivalence of \eqref{nd3} and \eqref{nd4}. If $p$ is a regular point then the level set $\Sigma$ containing $p$ of the restriction of $F$ to a small neighborhood of $p$ is smoothly immersed and there is a vector field $V$ transverse to $\Sigma$ near $p$ and satisfying $F_{i}V^{i} \neq 0$ in a neighborhood in $\Sigma$ of $p$. Let $h$ be the representative of the second fundamental form of $\Sigma$ associated with $V$ and let $X$ and $Y$ be vector fields tangent to $\Sigma$ near $p$. Then, along $\Sigma$, $(\hnabla_{X}dF)(Y) = -dF(\hnabla_{X}Y) = -h(X,Y)dF(V)$. Since $dF(V) \neq 0$ at $p$, it follows that $h$ is nondegenerate if and only if the restriction of $F_{ij}$ to $\Sigma$ is nondegenerate at $p$.
\end{proof}
\begin{remark}
The main point of Lemma \ref{nondegenlemma} is the equivalence of \eqref{nd1} and \eqref{nd2}. This was proved by R. Reilly as Proposition $4$ of \cite{Reilly-affine}.
\end{remark}

\begin{remark}
The deduction of the equivalence of \eqref{nd1} and \eqref{nd3} of Lemma \ref{nondegenlemma} via the identity \eqref{ufgauss} relating $\U(F)$ to the Gauss-Kronecker curvature is not completely satisfying because of its use of an apparently extraneous Euclidean structure. Here is given a direct proof that \eqref{nd3} implies \eqref{nd1} using only the equivalence of \eqref{nd3} and \eqref{nd4} and making no use of any metric structure. (An alternative proof that \eqref{nd1} implies \eqref{nd3} is given in the proof of Lemma \ref{unondegenlemma} below.) It is claimed that if $\U(F)$ vanishes at a regular point $p$ then there is a vector field $X$ tangent to $\Sigma$ at $p$ and such that $X^{p}F_{ip} = 0$. By the equivalence of \eqref{nd3} and \eqref{nd4} of Lemma \ref{nondegenlemma}, this suffices to show that $\Sigma$ is degenerate at $p$. If $\U(F)$ vanishes at $p$ and $N^{i}$ does not vanish at $p$, then for any vector field $Y$ tangent to $\Sigma$ at $p$ there holds $0 = \H(F)dF(Y) = (\hnabla_{Y}dF)(N)$, which shows that $F_{ij}$ degenerates at $p$. If $U^{ij}$ vanishes at $p$ then the rank of $F_{ij}$ at $p$ is at most $n-1$, so its restriction to $\Sigma$ is degenerate at $p$. Finally, if $N$ vanishes at $p$ and $U^{ij}$ has rank $1$ at $p$ then there are a vector field $X$ and a smooth function $c$ defined in a neighborhood of $p$ such that at $p$ there holds $U^{ij} = cX^{i}X^{j}$. If $p$ is a regular point, then, since $0 = N^{i} = U^{ij}F_{j} = cX^{i}X^{p}F_{p}$, $X$ is tangent to $\Sigma$ at $p$. On the other hand, since $0 = U^{ip}F_{pj} = cX^{i}X^{p}F_{ip}$ at $p$, there holds $X^{p}F_{ip} = 0$ at $p$. This shows that the negation of \eqref{nd1} implies the negation of \eqref{nd3}; precisely, if $\U(F)$ vanishes at $p$ then $\Sigma$ is degenerate at $p$ or $p$ is a critical point, and the latter possibility can occur only if $N$ vanishes at $p$.
\end{remark}

\begin{lemma}\label{unondegenlemma}
Let $F$ be a $\cinf$ function defined on an open subset of $\rea^{n+1}$ and let $\Om$ be a connected component with nonempty interior of the set where $\U(F)$ does not vanish. Define $\sff_{ij} =  F_{ij} - \U(F)^{-1}\H(F)F_{i}F_{j}$. Suppose $\lc_{r}(F, \Omega) = \{x \in \Omega: F(x) = r\}$ is nonempty. Then:
\begin{enumerate}
\item\label{msff} $N^{i} = U^{ip}F_{p}$ does not vanish on $\Om$ and each nonempty level set $\lc_{r}(F, \Om)$ is a smoothly immersed nondegenerate hypersurface co-oriented by $N$. The representative of the second fundamental form of $\lc_{r}(F, \Om)$ with respect to the transversal $N$ is the restriction of
\begin{align}\label{hn}
h_{ij} = -\U(F)^{-1}\sff_{ij} = -\U(F)^{-1}\left(F_{ij} - \H(F)\U(F)^{-1}F_{i}F_{j}\right),
\end{align}
and the signature of $h_{ij}$ is constant on $\Om$. 
\item\label{upij} For any nonvanishing $q \in \cinf(\Om)$ the tensor
\begin{align}
\label{kdef}m_{ij} &= \sff_{ij} + q\U(F)^{-1}F_{i}F_{j} = F_{ij} - \U(F)^{-1}\H(F)F_{i}F_{j} + q\U(F)^{-1}F_{i}F_{j},
\end{align}
is nondegenerate and $\det m = q \Psi^{2}$. Let $m^{ij}$ be the inverse of $m_{ij}$ defined by $m^{ip}m_{pj} = \delta_{j}\,^{i}$. The tensor $\sff^{ij} =  m^{ij} - q^{-1}\U(F)^{-1}N^{i}N^{j}$ does not depend on the choice of $q$. Since $\sff^{ip}F_{p} = 0$, it makes sense to speak of the restriction of $\sff^{ij}$ to $\lc_{r}(F, \Om)$, and this restriction is the inverse to the restriction of $\sff_{ij}$ to $\lc_{r}(F, \Om)$.
\item\label{upij0} There holds $U^{ij} - \U(F)^{-1}N^{i}N^{j} = \H(F)\sff^{ij}$. 
At a point $p \in \Om$ where $\H(F)$ vanishes, $U^{ij} = \U(F)^{-1}N^{i}N^{j}$ and $F_{ij}$ has rank $n$.
\item If $\Delta = \{x \in \Om:\H(F)(x) \neq 0\}$ has nonempty interior, then $\H(F)^{-1}(U^{ij} - \U(F)^{-1}N^{i}N^{j})$ extends smoothly to the closure $\bar{\Delta}$, where it equals $\sff^{ij}$.
\item\label{uvolrelation} Along $\lc_{r}(F, \Om)$, there holds $|\U(F)|^{(n+1)/2}|\vol_{h}| = |\imt(N)\Psi|$ where $|\vol_{h}|$ is the volume density induced on $\lc_{r}(F, \Om)$ by the metric $h_{ij}$ of \eqref{hn}.
\end{enumerate}
\end{lemma}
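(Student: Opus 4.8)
The plan is to isolate the one structural fact underlying everything — that $\sff_{ij}$ has constant rank $n$ on $\Om$, with one-dimensional kernel spanned by $N^i$ — and then to derive \eqref{upij}--\eqref{uvolrelation} as algebraic consequences, organized so as never to divide by $\H(F)$ (which, in the intended applications, vanishes identically on $\Om$). For \eqref{msff} I would first note that $N^i$ cannot vanish on $\Om$, since $\U(F)=N^iF_i$; then, by the equivalence of \eqref{nd1} with \eqref{nd3}--\eqref{nd4} in Lemma~\ref{nondegenlemma}, every point of $\Om$ is a regular point of $F$ at which the level set is nondegenerate, so each $\lc_r(F,\Om)$ is a smoothly immersed nondegenerate hypersurface, co-oriented by $N$ because $F_iN^i=\U(F)$ is nonvanishing of constant sign on the connected set $\Om$. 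Reprising the computation from the proof of Lemma~\ref{nondegenlemma} — for $X,Y$ tangent to a level set, $(\hnabla_XdF)(Y)=-h(X,Y)\,dF(N)$ with $dF(N)=\U(F)$, and $F_iX^i=F_jY^j=0$ lets one replace $F_{ij}$ by $\sff_{ij}$ — identifies the representative of the second fundamental form with respect to $N$ with \eqref{hn}. Contracting $\sff_{ij}$ with $N^j$, using $F_{ij}N^j=\H(F)F_i$ and $F_jN^j=\U(F)$, gives $\sff_{ij}N^j=0$; since $T_p\lc_r$ is a complement to the span of $N$ on which $h$ is nondegenerate, both $h_{ij}$ and $\sff_{ij}=-\U(F)h_{ij}$ have rank exactly $n$ at every point of $\Om$, and by continuity of eigenvalues the signature of $h_{ij}$ is constant on the connected set $\Om$.

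For \eqref{upij} and \eqref{upij0} I would proceed algebraically. Applying \eqref{fdet} to $m_{ij}=F_{ij}+(q-\H(F))\U(F)^{-1}F_iF_j$ gives $\det m=\big(\H(F)+(q-\H(F))\U(F)^{-1}\U(F)\big)\Psi^2=q\Psi^2$, so $m$ is nondegenerate. Using the constant rank $n$ of $\sff_{ij}$ from \eqref{msff}, I would define $\sff^{ij}$ to be the unique symmetric contravariant two-tensor annihilating $F_j$ whose restriction to $T\lc_r$ inverts $\sff_{ij}$; equivalently $\sff^{ip}\sff_{pk}=\delta_{k}\,^{i}-\U(F)^{-1}N^iF_k$ (the projection onto $T\lc_r$ along $N$), and this determines $\sff^{ij}$ because $F_j$ together with the columns of $\sff_{jk}$ span the cotangent space — here one uses $F_jN^j=\U(F)\neq0$ while $\sff_{jk}N^k=0$. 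A short expansion, using $\sff^{ij}F_j=0$, $N^j\sff_{jk}=0$ and $N^jF_j=\U(F)$, then confirms $\big(\sff^{ij}+q^{-1}\U(F)^{-1}N^iN^j\big)m_{jk}=\delta_{k}\,^{i}$, so $m^{ij}-q^{-1}\U(F)^{-1}N^iN^j=\sff^{ij}$ independently of $q$. For \eqref{upij0}: both $U^{ij}-\U(F)^{-1}N^iN^j$ and $\H(F)\sff^{ij}$ annihilate $F_j$, and both contract with $\sff_{jk}$ to $\H(F)\big(\delta_{k}\,^{i}-\U(F)^{-1}N^iF_k\big)$ (using $U^{ij}F_{jk}=\H(F)\delta_{k}\,^{i}$); since a symmetric contravariant two-tensor annihilating $F_j$ is determined by its contraction with $\sff_{jk}$, they agree. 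Specializing to $\H(F)=0$ gives $U^{ij}=\U(F)^{-1}N^iN^j$ there, with $F_{ij}=\sff_{ij}$ of rank $n$.

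The fourth statement is then immediate: on $\Delta$, \eqref{upij0} exhibits $\H(F)^{-1}(U^{ij}-\U(F)^{-1}N^iN^j)$ as $\sff^{ij}$, which is smooth on all of $\Om$ and so extends it to $\bar\Delta$. For \eqref{uvolrelation} I would fix $p\in\lc_r(F,\Om)$ and a basis $X_1,\dots,X_n$ of $T_p\lc_r$; in the basis $N,X_1,\dots,X_n$ of $\rea^{n+1}$ the matrix of $m_{ij}$ is block diagonal with blocks $q\U(F)$ and $\sff_{ij}|_{T\lc_r}$, so its determinant in this basis equals $q\,\U(F)\det(\sff_{ij}|_{T\lc_r})$; on the other hand, by change of basis it equals $\big(\imt(N)\Psi(X_1,\dots,X_n)\big)^2$ times the value $q$ of $\det m_{ij}$ in equiaffine coordinates, whence $\U(F)\det(\sff_{ij}|_{T\lc_r})=\big(\imt(N)\Psi(X_1,\dots,X_n)\big)^2$. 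Since $h_{ij}|_{T\lc_r}=-\U(F)^{-1}\sff_{ij}|_{T\lc_r}$ is an $n\times n$ form, $|\vol_h|=|\U(F)|^{-n/2}\,|\det(\sff_{ij}|_{T\lc_r})|^{1/2}$, and multiplying by $|\U(F)|^{(n+1)/2}$ produces $|\imt(N)\Psi|$.

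The part I expect to demand the most care is \eqref{upij}--\eqref{upij0}: the whole point is to run these without ever inverting $F_{ij}$ or dividing by $\H(F)$, and the device that makes this possible is the constant-rank-$n$ property of $\sff_{ij}$ from \eqref{msff}, which both legitimizes the definition and smoothness of $\sff^{ij}$ and turns the key identity of \eqref{upij0} — the bridge to the fourth statement — into pure linear algebra. I would also be careful to verify the spanning claim ($F_j$ and the columns of $\sff_{jk}$ span the cotangent space) underlying the characterization of $\sff^{ij}$, and to keep straight the change-of-basis and density bookkeeping in \eqref{uvolrelation}.
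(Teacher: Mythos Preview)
Your proposal is correct and uses the same ingredients as the paper's proof, but organized somewhat differently. The paper \emph{defines} $\sff^{ij}$ by the formula $m^{ij}-q^{-1}\U(F)^{-1}N^iN^j$ and then shows, by directly comparing two choices of $q$, that it is $q$-independent and restricts to the inverse of $\sff_{ij}$; you instead define $\sff^{ij}$ intrinsically (as the unique symmetric tensor annihilating $F_j$ and inverting $\sff_{ij}$ on $T\lc_r$) and then verify the formula, which makes $q$-independence automatic. For \eqref{upij0} the paper contracts $U^{ij}-\U(F)^{-1}N^iN^j$ against $m_{jk}$ and then applies $m^{-1}$, whereas you contract against $\sff_{jk}$ and invoke your spanning characterization; both are the same linear algebra. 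For \eqref{uvolrelation} the paper picks $q$ so that $\det m=\Psi^2$ and rescales by $|\U(F)|^{-1/2}$, while you use the block decomposition of $m$ in the basis $(N,X_1,\dots,X_n)$; these are two ways of reading off the same determinantal identity. Your intrinsic-first organization is arguably cleaner, since it isolates the rank-$n$ property of $\sff_{ij}$ as the engine and makes smoothness of $\sff^{ij}$ a consequence rather than a hypothesis, but nothing essential separates the two arguments.
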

\begin{proof}
Since $dF$ does not vanish on $\Om$, if $\lc_{r}(F, \Om)$ is nonempty then it is a smoothly immersed hypersurface, and since $N^{i}F_{i} = \U(F)$ also does not vanish on $\Om$, $N^{i}$ is transverse to $\lc_{r}(F, \Om)$. By \eqref{induced}, for vector fields $X$ and $Y$ tangent to $\lc_{r}(F, \Om)$ there holds $(\hnabla_{X}dF)(Y) = -\U(F)h(X, Y)$. Together with $N^{p}F_{ip} = \H(F)F_{i}$ this shows that the tensor \eqref{hn} satisfies $N^{p}h_{ip} = 0$ and that its restriction to $\lc_{r}(F, \Om)$ is the representative of the second fundamental form determined by $N^{i}$. 
Applying \eqref{fdet} yields $\det m = q\Psi^{2}$, so $m_{ij}$ is nondegenerate. Because $\Om$ is connected and, in the space of symmetric bilinear forms on a vector space, a connected component of the subspace of nondegenerate forms comprises forms of a fixed signature, the smooth nondegenerate form $m_{ij}$ cannot change signature on $\Om$. Since $m_{ij} = -\U(F)h_{ij} + \U(F)^{-1}F_{i}F_{j}$, $N^{p}m_{ip} = qF_{i}$ annihilates the tangent space to $\lc_{r}(F, \Om)$, and $N^{i}N^{j}m_{ij} = \U(F)q$ has constant sign on $\Om$, the tensor $h_{ij}$ cannot change signature on $\Om$.

Let $\tilde{m}_{ij}$ be defined as $m_{ij}$ in \eqref{kdef}, but with the nonvanishing function $\tilde{q} \in \cinf(\Om)$ in place of $q$. Then 
\begin{align}
\begin{split}
\tilde{m}^{ij} - m^{ij} & = 
(\tilde{m}^{ip} - m^{ip})m_{pq}m^{qj} \\
&= \tilde{m}^{ip}(\tilde{m}_{pq} + (q - \tilde{q})\U(F)^{-1}F_{p}F_{q})m^{qj} - m^{ij} = (\tilde{q}^{-1} - q^{-1})\U(F)^{-1}N^{i}N^{j},
\end{split}
\end{align} 
from which it follows that the tensor $\sff^{ij}$ does not depend on $q$.

The tensor $\sff_{ij}$ satisfies $N^{j}\Pi_{ij} = 0$, so $N^{j}m_{ij} = qF_{i}$. Consequently, $N^{i}= qm^{ip}F_{p}$, and so $\sff^{ij}F_{j} = 0$. Hence it makes sense to speak of the restriction of $\sff^{ij}$ to $\lc_{r}(F, \Om)$ . Since $\sff^{ij}F_{j} = 0$,
\begin{align}\label{sffsff}
\sff^{ip}\sff_{pj} = \sff^{ip}m_{pj} = (m^{ip} - q^{-1}\U(F)^{-1}N^{i}N^{p})m_{pj} = \delta_{j}\,^{i} - \U(F)^{-1}F_{j}N^{i},
\end{align}
which shows that the restriction of $\sff^{ij}$ to $\lc_{r}(F, \Om)$ is the symmetric tensor inverse to the restriction to $\lc_{r}(F, \Om)$ of $\sff_{ij}$. Because $(U^{ip} - \U(F)^{-1}N^{i}N^{p})F_{p} = 0$,
\begin{align}\label{unondegen0}
(U^{ip} - \U(F)^{-1}N^{i}N^{p})m_{pj} =(U^{ip} - \U(F)^{-1}N^{i}N^{p})F_{pj} = \H(F)(\delta_{j}\,^{i} - \U(F)^{-1}F_{j}N^{i}) 
\end{align}
Raising the index $j$ in \eqref{unondegen0} and substituting $N^{i} = qm^{ip}F_{p}$ into the result yields \eqref{upij0}.
By \eqref{upij0}, where $\H(F)$ vanishes there holds $U^{ij} = \U(F)^{-1}N^{i}N^{j}$. 
Since $\sff^{ij}$ is smooth on $\Om$, it follows from \eqref{upij0} that, whenever the subset $\Delta$ is nonempty, the tensor $\H(F)^{-1}(U^{ij} - \U(F)^{-1}N^{i}N^{j})$ extends smoothly to the closure $\bar{\Delta}$, where it equals $\sff^{ij}$. 

Let $m_{ij}$ be the tensor defined in \eqref{kdef} with $q = \U(F)^{-1}$, so that, by \eqref{upij}, $\det m = \Psi^{2}$ and $m^{ij}F_{i}F_{j} = \U(F)$. By the definition of the volume densities $|\vol_{h}|$ and $|\vol_{m}|$ of the metrics $h$ and $m$, for any $X_{1}, \dots, X_{n}$ tangent to $\Sigma$ and $u = |\U(F)|^{-1/2}$ there holds
\begin{align}\label{quf}
\begin{split}
|(\imt(\gn)&\Psi)(X_{1}, \dots, X_{n})|  = u^{-(n+1)}\left|\Psi\left(u\gn, uX_{1}, \dots, uX_{n}\right)\right| 
 \\ &= u^{-(n+1)}\left|\vol_{m}\left(u\gn, uX_{1}, \dots, uX_{n}\right)\right|  =  |\U(F)|^{(n+1)/2}|\vol_{h}(X_{1}, \dots, X_{n})|.
\end{split}
\end{align}
This shows \eqref{uvolrelation}.
\end{proof}

\begin{remark}
A consequence of Lemma \ref{unondegenlemma} is that when $\H(F) = 0$ on all of $\Om$ the tensor $\sff^{ij}$ is still defined. This means that formulas obtained assuming $\H(F) \neq 0$ and involving $\H(F)^{-1}(U^{ij}  - \U(F)^{-1}N^{i}N^{j})$ continue to make sense where $\H(F)$ vanishes, provided that $\U(F)$ does not vanish.
\end{remark}

\begin{example}
Allowing $\H(F)$ to vanish is useful because in interesting examples it occurs that the level sets of $F$ are nondegenerate although $\H(F)$ is identically zero. For example, along the helicoid defined by the vanishing of $F(u, x, y) = x\sin u + y\cos u$, the Hessian of $F$ degenerates, but it follows from Lemma \ref{unondegenlemma} that this level set is a nondegenerate hypersurface, because $\U(F) = -1$. By Corollary \ref{amccorollary} below, the helicoid has equiaffine mean curvature zero. In section \ref{examplesection} this example is generalized to higher dimensions.
\end{example}

Example \ref{gnexample} shows that it can occur that at a regular point $p$ of $F$ the Hessian of $F$ has corank one and $\U(F)$ vanishes. Lemma \ref{nondegenlemma} implies that in this case the level set of $F$ containing $p$ is degenerate at $p$.

\begin{example}\label{gnexample}
It can occur that $U^{ij}$ has rank $1$ (so $\hess F$ has corank $1$ and $\H(F) = 0$) and $\U(F)$ vanishes. The following example is based on the construction in section $7$ of \cite{Gordan-Noether}. 
Let $k \geq 2$ and let $a$, $b$, and $c$ be linearly independent homogeneous degree $k$ polynomials of the variables $x_{1}$ and $x_{2}$. Then $P = a(x_{1}, x_{2})x_{3} + b(x_{1}, x_{2})x_{4} + c(x_{1}, x_{2})x_{5} \in \pol^{k+2}(\rea^{5})$ is irreducible and not affinely equivalent to a polynomial of less than five variables but solves $\H(P) = 0$, for the kernel of the Hessian of $P$ contains the vector field 
\begin{align}\label{vgn}
V = (b_{1}c_{2} - b_{2}c_{1})\pr_{3} + (c_{1}a_{2} - c_{2}a_{1})\pr_{4} + (a_{1}b_{2} - a_{2}b_{1})\pr_{5}, 
\end{align}
where subscripts indicate first partial derivatives. A concrete example is $P = x_{1}^{2}x_{3} + x_{1}x_{2}x_{4} + x_{2}^{2}x_{5}$. See \cite{Ciliberto-Russo-Simis} for many related examples. Straightforward calculations show that $U^{ij} = V^{i}V^{j}$, so that $\U(P) = dP(V)^{2}$. However, 
\begin{align}\label{dpv}
dP(V) = \begin{vmatrix} a & a_{1} & b_{1} \\ b & b_{1} & b_{2} \\ c & c_{1} & c_{2} \end{vmatrix} = 0.
\end{align}
is the determinant of the $1$-jet of the mapping from $\rea^{2}$ to $\rea^{3}$ having components $a$, $b$, and $c$, and this vanishes because, by the Euler identity, $ka = x_{1}a_{1} + x_{2}a_{2}$ and similarly for $b$ and $c$.
Hence $\U(P) = 0$ although $U^{ij}$ has rank $1$. 

Since the Euclidean norm $|dP|_{\delta}$ does not vanish along a regular level set $\Sigma$ of $P$, by Corollary \ref{gkcorollary} the Gauss-Kronecker curvature of such a level set is zero. That is $\Sigma$ is a developable hypersurface.
\end{example}

Example \ref{detexample}, at the end of Section \ref{equiaffinenormalsection}, illustrates some issues related to connected components of level sets and the signatures of their second fundamental forms.

\subsection{Equiaffine normal vector field}\label{equiaffinenormalsection}
This section describes the equiaffine normal, and its associated tensors, of a level set of a function $F \in \cinf(\Om)$, where $\Om \subset \rea^{n+1}$ is an open set, satisfying that $\U(F)$ does not vanish on $\Om$.

\begin{lemma}\label{affinenormaldefinitionlemma} 
Let $F$ be a $\cinf$ function defined on an open subset of $\rea^{n+1}$. Let $\Omega$ be a connected component with nonempty interior of the region on which $\U(F)$ does not vanish. Then a nonempty level set $\lc_{r}(F, \Om)$ is nondegenerate and the vector field $\nm$ defined by
\begin{align}\label{nmdefined}
\begin{split}
\nm^{i} &= -|\U(F)|^{1/(n+2)}\U(F)^{-1}N^{i} - k^{ip}\muf_{p},
\end{split}
\end{align}
where  $k^{ij} = |\U(F)|^{1/(n+2)}\sff^{ij}$, $\sff^{ij}$ is defined in \eqref{upij} of Lemma \ref{unondegenlemma}, and $\muf = (n+2)^{-1}d\log\U(F)$, is transverse to $\lc_{r}(F, \Om)$. The restriction to $\lc_{r}(F, \Om)$ of the tensor 
\begin{align}\label{emnm}
k_{ij} = |\U(F)|^{-1/(n+2)}\sff_{ij} = |\U(F)|^{-1/(n+2)}(F_{ij} - \U(F)^{-1}\H(F)F_{i}F_{j}),
\end{align}
represents the second fundamental form of $\lc_{r}(F, \Om)$ with respect to $\nm$. 
\end{lemma}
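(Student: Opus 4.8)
The plan is to derive both assertions from the structural facts already recorded in Lemma~\ref{unondegenlemma}, so that the argument is little more than a bookkeeping computation with the conormal $dF$. Nondegeneracy requires nothing new: by hypothesis $\Om$ is a connected component with nonempty interior of the set where $\U(F)$ does not vanish, so part~\eqref{msff} of Lemma~\ref{unondegenlemma} already asserts that every nonempty level set $\lc_{r}(F,\Om)$ is a smoothly immersed nondegenerate hypersurface co-oriented by $N^{i} = U^{ip}F_{p}$.

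For transversality I would compute $F_{i}\nm^{i}$ directly from \eqref{nmdefined}. Since $\lc_{r}(F,\Om)$ has conormal $dF$, a vector field $V$ is transverse to it precisely when $F_{i}V^{i}$ is nowhere zero on $\lc_{r}(F,\Om)$. Using the defining property $N^{i}F_{i} = \U(F)$, the first term of \eqref{nmdefined} contributes $-|\U(F)|^{1/(n+2)}$; using $\sff^{ij}F_{j} = 0$ from part~\eqref{upij} of Lemma~\ref{unondegenlemma}, the second term $k^{ip}\muf_{p} = |\U(F)|^{1/(n+2)}\sff^{ip}\muf_{p}$ is annihilated by $F_{i}$. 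Hence $F_{i}\nm^{i} = -|\U(F)|^{1/(n+2)}$, which has no zeros on $\Om$; this proves transversality, and the same computation shows that the correction term $-k^{ip}\muf_{p}$ is itself tangent to every level set.

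For the second fundamental form I would use the relation $(\hnabla_{X}dF)(Y) = -dF(\hnabla_{X}Y) = -h(X,Y)\,F_{i}\nm^{i}$, valid for vector fields $X, Y$ tangent to $\lc_{r}(F,\Om)$, which follows from \eqref{induced} exactly as in the proof of Lemma~\ref{nondegenlemma}; here $h$ denotes the representative of the second fundamental form associated with the transversal $\nm$. The left-hand side equals $F_{ij}X^{i}Y^{j}$, and since $F_{i}X^{i} = F_{j}Y^{j} = 0$ the rank-one correction in $\sff_{ij} = F_{ij} - \U(F)^{-1}\H(F)F_{i}F_{j}$ drops out, so $F_{ij}X^{i}Y^{j} = \sff_{ij}X^{i}Y^{j}$. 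Combining this with $F_{i}\nm^{i} = -|\U(F)|^{1/(n+2)}$ gives $h(X,Y) = |\U(F)|^{-1/(n+2)}\sff_{ij}X^{i}Y^{j} = k_{ij}X^{i}Y^{j}$, which is precisely the restriction of the tensor \eqref{emnm} to $\lc_{r}(F,\Om)$.

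I do not expect a serious obstacle; the only point needing a moment's care is that the tangential term $-k^{ip}\muf_{p}$ affects neither step, since adding to a transversal a vector field tangent to $\lc_{r}(F,\Om)$ changes neither the transversality nor the representative of the second fundamental form it determines. Every other ingredient (nondegeneracy, the orthogonality $\sff^{ij}F_{j} = 0$, the identity $N^{i}F_{i} = \U(F)$, and the tensorial form of $\sff_{ij}$) is already available from Lemma~\ref{unondegenlemma}.
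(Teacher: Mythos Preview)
Your proposal is correct and follows essentially the same approach as the paper: both derive everything from Lemma~\ref{unondegenlemma}, with nondegeneracy taken directly from part~\eqref{msff} and the second fundamental form obtained from the Gauss relation $(\hnabla_{X}dF)(Y) = -h(X,Y)\,dF(\nm)$. If anything, your write-up is more explicit than the paper's one-line proof, which simply cites \eqref{msff} (a statement about the transversal $N$, not $\nm$) and leaves implicit the rescaling-plus-tangential-shift argument that you spell out; your direct computation of $F_{i}\nm^{i} = -|\U(F)|^{1/(n+2)}$ and your observation that the tangential correction $-k^{ip}\muf_{p}$ is irrelevant to both transversality and the second fundamental form fill in exactly what the paper omits.
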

\begin{proof}
The nondegeneracy of the level sets of $F$ is the conclusion of Lemma \ref{unondegenlemma}. By \eqref{msff} of Lemma \ref{unondegenlemma}, $k_{ij}$ represents the second fundamental form of $\lc_{r}(F, \Om)$ with respect to $\nm$. 
\end{proof}

The transversal $\nm^{i}$ is the \textit{equiaffine normal vector field} associated with $F$. This terminology is justified by Theorem \ref{affinenormalsametheorem} that shows that, along a level set of $F$, $\nm^{i}$ agrees with the equiaffine normal of the level set as usually defined. By Lemma \ref{unondegenlemma}, the tensor $k_{ij}$ of \eqref{emnm} has rank $n$ wherever $\U(F)$ is not zero, and, since $N^{j}k_{ij} = 0$, by \eqref{hn} and \eqref{nm1}, the restriction of $k_{ij}$ to a level set of $F$ is the representative of the second fundamental form determined by $\nm$; it is the \textit{equiaffine metric} of the level set. Note that, although the signature of $k_{ij}$ need not be the same on distinct connected components of a level set of $F$, on those contained within a connected component of the complement of the zero set of $\U(F)$, it does not change, by \eqref{msff} of Lemma \ref{unondegenlemma}.

\begin{remark}
After translating notation, the formula \eqref{nm1} for the equiaffine normal can be identified, up to sign, with that obtained, under the assumption $\H(F) \neq 0$, by J. Hao and H. Shima in Theorem $1$ of \cite{Hao-Shima}. Precisely, in the notation used here the formula of \cite{Hao-Shima} can be written 
\begin{align}\label{hsnm}
\begin{split}
 \sign(\U\H)|\U|^{1/(n+2)}\left(\U^{-1}\H F^{ip}F_{p} + F^{ip}\muf_{p} - \U^{-1}\H F^{pq}F_{p}\muf_{q}F^{ia}F_{a}\right)
\end{split}
\end{align}
where $\H = \H(F)$, $\U = \U(F)$, and $F^{ij} = \H(F)^{-1}U^{ij}$ is the symmetric tensor satisfying $F^{ip}F_{pj} = \delta_{j}\,^{i}$ (which exists because $\H(F) \neq 0$). It is not obvious that the expression \eqref{hsnm} continues to make sense when $\H(F) = 0$ but $\U(F) \neq 0$. That this is so follows from Lemma \ref{unondegenlemma}; precisely, \eqref{hsnm} equals $-\sign(\U(F)\H(F))\nm^{i}$. 
\end{remark}

Let $I\subset \rea$ be a connected open interval and let $\psi:I \to \rea$ be a $C^{2}$ diffeomorphism. The level sets of $F$ and $\psi \circ F$ are the same, just differently parameterized, in the sense that for $r \in I$ there holds $\lc_{r}(F,  F^{-1}(I)\cap \Omega) = \lc_{\psi(r)}(\psi \circ F, \Omega)$, and so objects depending only on the geometry of the level sets need to transform well under such external reparameterizations. 

\begin{lemma}\label{uinflemma}
For an open domain $\Om \subset \rea^{n+1}$, define $\uinf(\Om) = \{F \in \cinf(\Om): \U(F)(x) \neq 0 \,\,\text{for all}\,\, x \in \Om\}$. If $F \in \uinf(\Om)$, $I \subset F(\Om)$ is a connected open subinterval, and $\psi:I \to \rea$ is a smooth diffeomorphism onto its image then $\psi \circ F \in \uinf(\Om)$.
\end{lemma}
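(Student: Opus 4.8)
The plan is to reduce the statement to an explicit transformation law for $\U$ under external reparameterization, namely
\[
\U(\psi \circ F) = (\psi' \circ F)^{\,n+2}\,\U(F),
\]
after which the conclusion is immediate. First I would record that, because $\psi:I \to \rea$ is a smooth diffeomorphism onto its image, its inverse is smooth and the chain rule forces $\psi'$ to be nonvanishing on $I$; hence $\psi' \circ F$ is a nonvanishing smooth function on the open set $F^{-1}(I)$, on which $G := \psi \circ F$ is defined and smooth. The chain rule also gives $G_i = (\psi' \circ F)\,F_i$ and $G_{ij} = (\psi' \circ F)\,F_{ij} + (\psi'' \circ F)\,F_i F_j$.

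To obtain the transformation law I would feed these expressions into the determinant identity \eqref{fdet}. For any real constant $q$ one has $G_{ij} + q\,G_i G_j = (\psi' \circ F)\bigl(F_{ij} + \tilde q\,F_i F_j\bigr)$ with $\tilde q := (\psi'' \circ F)(\psi' \circ F)^{-1} + q\,(\psi' \circ F)$; taking determinants and applying \eqref{fdet} on the left to $G$ and on the right to $F$ (with $\tilde q$ in the role of $q$) yields $\bigl(\H(G) + q\,\U(G)\bigr)\Psi^2 = (\psi' \circ F)^{\,n+1}\bigl(\H(F) + \tilde q\,\U(F)\bigr)\Psi^2$. Since both sides are affine-linear in the constant $q$ and the identity holds for every $q \in \rea$, equating the coefficients of $q$ gives $\U(G) = (\psi' \circ F)^{\,n+2}\,\U(F)$ (and, incidentally, equating the $q$-independent terms gives $\H(G) = (\psi' \circ F)^{\,n+1}\H(F) + (\psi' \circ F)^{\,n}(\psi'' \circ F)\,\U(F)$). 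Alternatively one can extract the same identity directly from the bordered-determinant formula \eqref{ufdet}: substitute the formulas for $G_i$ and $G_{ij}$, use column operations to eliminate the $\psi'' \circ F$ term (legitimate precisely because $\psi' \circ F$ is nonvanishing), and then pull $(\psi' \circ F)^{\,n+2}$ out of the resulting $(n+2)\times(n+2)$ determinant.

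Finally, since $\psi' \circ F$ is nonvanishing on $F^{-1}(I)$ and $\U(F)$ is nonvanishing there by the hypothesis $F \in \uinf(\Om)$, the product $(\psi' \circ F)^{\,n+2}\,\U(F) = \U(\psi \circ F)$ is nonvanishing, so $\psi \circ F \in \uinf$ on its domain of definition $F^{-1}(I) \subseteq \Om$. I do not anticipate a genuine obstacle here: the only matters requiring care are the bookkeeping in the determinant computation, the observation that $\psi'$ is automatically nonvanishing, and the harmless point that $\psi \circ F$ is a priori defined on $F^{-1}(I)$ rather than on all of $\Om$.
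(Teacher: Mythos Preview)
Your proof is correct and essentially the same as the paper's: both establish the transformation law $\U(\psi\circ F)=(\psi'\circ F)^{n+2}\U(F)$ from the chain-rule formulas for $G_i$ and $G_{ij}$ together with the determinantal identities of Section~\ref{ufsection}, after which nonvanishing is immediate. The only cosmetic difference is that the paper works directly with the bordered determinant \eqref{ufdet} (your ``alternative''), whereas your primary route extracts the same identity by plugging into \eqref{fdet} with a free parameter $q$ and comparing coefficients; your observation about the domain $F^{-1}(I)$ is well taken.
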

\begin{proof}
There holds
\begin{align}\label{gpsif}
(\psi \circ F)_{ij} = \dot{\psi}F_{ij} + \ddot{\psi}F_{i}F_{j}. 
\end{align}
From \eqref{ufdet} there results
\begin{align}\label{upsif}
\begin{split}
\U(\psi \circ F) &= -\begin{vmatrix} (\psi \circ F)_{ij} & (\psi \circ F)_{i} \\ (\psi \circ F)_{j} & 0 \end{vmatrix}  = -\begin{vmatrix} \dot{\psi}F_{ij} + \ddot{\psi}F_{i}F_{j}  & \dot{\psi}F_{i} \\ \dot{\psi}F_{j} & 0 \end{vmatrix} \\
& =  -\dot{\psi}^{n+2}\begin{vmatrix} F_{ij}  & F_{i} \\ F_{j} & 0 \end{vmatrix} = \dot{\psi}^{n+2}\U(F).
\end{split}
\end{align}
By \eqref{upsif}, the assumption that $\U(F)$ not vanish on $\Om$ is preserved by external reparameterization.
\end{proof}

\begin{remark}
Computing the determinant of \eqref{gpsif} using \eqref{rankone} yields
\begin{align}\label{hpsif}
\H(\psi \circ F) = \dot{\psi}^{n+1}(\H(F) + (\ddot{\psi}/ \dot{\psi})\U(F)).
\end{align}
By Lemma \ref{uinflemma}, $\uinf$ is preserved by external reparameterizations whereas, by \eqref{hpsif}, the analogous statement is not true for the subset comprising $F \in \cinf(\Om)$ with nonvanishing $\H(F)$. This observation gives another reason for the prominent role played by $\U(F)$.
\end{remark}

An operator $\A:\uinf(\Om) \to \Gamma(T\Om)$ is \textit{invariant under (smooth) external reparameterizations} if for every $F \in \uinf(\Om)$, every connected open subset $I \subset F(\Om)$, and every smooth $\psi:I \to \rea$ mapping $I$ diffeomorphically onto its image there holds
\begin{align}\label{extre}
\A(\psi \circ F)^{i} = \sign(\dot{\psi})\A(F)^{i}.
\end{align}
Let $L_{g}$ be the operator of left multiplication by $g \in \Aff(n+1, \rea)$ on $\rea^{n+1}$, so that $g \cdot F = F \circ L_{g^{-1}}$. Then $\A$ is \textit{equiaffinely invariant} if for every $g \in \Aff(n+1, \rea)$ there holds
\begin{align}\label{affcov}
L_{g^{-1}}^{\ast}(\A(F))^{i} = |\det \ell(g)|^{2/(n+2)}\A(g\cdot F)^{i}
\end{align}
at every point $x \in \Om$ such that $g^{-1}x \in \Om$. Note that \eqref{affcov} actually demands that $\A$ be affinely covariant in a particular way. There could be considered a rule like \eqref{affcov} with an arbitrary character $\chi:\Aff(n+1, \rea) \to \reat$ in place of $|\det \ell(g)|^{2/(n+2)}$. The particular choice of character is explained as follows. The interest here is in operators $\A(F)$ that are \textit{transverse} in the sense that they satisfy additionally the condition that $dF_{i}\A(F)^{i}$ is nonvanishing on $\Om$ for $F \in \uinf(\Om)$. This guarantees that at each $p \in \Om$ the vector field $\A(F)^{i}$ is transverse to the level set $\Sigma$ of $F$ containing $p$. On $\Sigma$ there are two natural volume densities determined by $\A(F)$. These are $|\imt(\A(F))\Psi|$ and the volume density of the representative of the second fundamental form of $\Sigma$ corresponding to $\A(F)$. The choice of character in \eqref{affcov} is determined by requiring that these two volume densities rescale in the same way when $F$ is replaced by $g \cdot F$. Precisely, suppose $L_{g^{-1}}^{\ast}(\A(F)) = \chi(g)\A(g\cdot F)$. Then $|\imt(\A(g\cdot f)\Psi| = |\det \ell(g)|^{-1}|\chi(g)||\imt(\A(F))\Psi|$ and the volume densities $|\vol_{\tilde{h}}|$ and $|\vol_{h}|$ of the representatives $\tilde{h}$ and $h$ of the second fundamental form of $\Sigma$ corresponding respectively to $\A(g\cdot F)$ and $\A(F)$ are related by $|\vol_{\tilde{h}}| = |\chi(g)|^{-n/2}|\vol_{h}|$. Hence $|\imt(\A(g\cdot f))\Psi|/|\vol_{\tilde{h}}| = |\imt(\A(F)\Psi|/|\vol_{h}|$ if and only if $|\chi(g)| = |\det \ell(g)|^{2/(n+2)}$.

\begin{theorem}\label{affinecovariancetheorem}
For an open domain $\Om \subset \rea^{n+1}$, the transverse operators associating with $F \in \uinf(\Om)$ the vector fields $\nm^{i}$ (where $\nm$ is defined in \eqref{nmdefined}) and
$-|\U(F)|^{1/(n+2)}\U(F)^{-1}N^{i}$ on $\Om$ are invariant under external reparameterizations and equiaffinely invariant. 
\end{theorem}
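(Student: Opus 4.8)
The plan is to reduce both invariances to transformation laws for the building blocks of \eqref{nmdefined} --- the function $\U(F)$, the vector field $N^{i}=U^{ip}F_{p}$, the degenerate bivector $\sff^{ij}$ of Lemma~\ref{unondegenlemma}, and the one-form $\muf=(n+2)^{-1}d\log\U(F)$ --- under external reparameterization $F\mapsto\psi\circ F$ and under the affine action $F\mapsto g\cdot F$, and then to substitute them into \eqref{nmdefined}. Transversality of both fields follows at once from $N^{i}F_{i}=\U(F)$ and, from \eqref{upij} of Lemma~\ref{unondegenlemma}, $\sff^{ip}F_{p}=0$: contracting either field with $dF_{i}$ gives $-|\U(F)|^{1/(n+2)}$, nonvanishing on $\Om$. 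For the field $-|\U(F)|^{1/(n+2)}\U(F)^{-1}N^{i}$ only the laws for $\U(F)$ and $N^{i}$ are needed, and the computation below applies to it verbatim once the $\sff$-term is dropped.

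For external reparameterization, $\U(\psi\circ F)=\dot\psi^{n+2}\U(F)$ and $\H(\psi\circ F)=\dot\psi^{n+1}(\H(F)+(\ddot\psi/\dot\psi)\U(F))$ are \eqref{upsif} and \eqref{hpsif}. Using $N^{p}F_{ip}=\H(F)F_{i}$ (a rephrasing of $U^{ip}F_{pj}=\H(F)\delta_{j}{}^{i}$) and $N^{i}F_{i}=\U(F)$, a short computation with \eqref{gpsif} shows that $N^{i}(\psi\circ F)-\dot\psi^{n+1}N^{i}(F)$ is annihilated by $dF$ and killed by $\hess(\psi\circ F)$; since $\hess(\psi\circ F)$ restricts to a level set of $F$ as $\dot\psi$ times the restriction of $\hess F$, which is nondegenerate there by Lemma~\ref{nondegenlemma} (because $\U(F)\neq0$), this difference vanishes, i.e. $N^{i}(\psi\circ F)=\dot\psi^{n+1}N^{i}(F)$. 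Substituting the laws for $F_{ij}$, $\U$, $\H$ into the definition of $\sff_{ij}$ gives $\sff_{ij}(\psi\circ F)=\dot\psi\,\sff_{ij}(F)$, and since by \eqref{upij} of Lemma~\ref{unondegenlemma} the tensor $\sff^{ij}$ is the unique symmetric bivector annihilating $dF$ whose restriction to the level sets of $F$ inverts $\sff_{ij}$, one gets $\sff^{ij}(\psi\circ F)=\dot\psi^{-1}\sff^{ij}(F)$. Finally, because $\dot\psi$ is constant along the level sets of $F$, applying $d\log$ to $\U(\psi\circ F)=\dot\psi^{n+2}\U(F)$ gives $\muf(\psi\circ F)=\muf(F)+(\ddot\psi/\dot\psi)\,dF$.

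For the affine action, the laws for $\H(g\cdot F)$ and $\U(g\cdot F)$ are Lemma~\ref{hgflemma}. Naturality of the adjugate under linear changes of variable gives that $U^{ij}(g\cdot F)$, hence $N^{i}(g\cdot F)$, are the $\ell(g)$-transforms of $U^{ij}(F)\circ L_{g^{-1}}$ and $N^{i}(F)\circ L_{g^{-1}}$ weighted by $(\det\ell(g))^{-2}$; the same bookkeeping applied to $\sff_{ij}$, together with the uniqueness characterization of $\sff^{ij}$ just used, shows $\sff^{ij}$ transforms as an honest contravariant two-tensor under $L_{g^{-1}}^{\ast}$; and since $\det\ell(g)$ is a constant, $\muf=(n+2)^{-1}d\log\U(F)$ transforms as an honest one-form.

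Substituting these laws into \eqref{nmdefined}, the only terms that could obstruct invariance are the extra pieces carried by $\muf$: the summand $(\ddot\psi/\dot\psi)\,dF$ in the reparameterization case, and the constant $\det\ell(g)$-contribution in the affine case. But $\muf$ enters $\nm^{i}$ only through the contraction $\sff^{ip}\muf_{p}$, so $\sff^{ip}F_{p}=0$ annihilates the reparameterization anomaly, while the affine one never appears because $d$ kills the constant $\log|\det\ell(g)|$. The remaining weights reassemble into $\sign(\dot\psi)\,\nm^{i}(F)$, respectively $|\det\ell(g)|^{2/(n+2)}\,\nm^{i}(g\cdot F)$, which are \eqref{extre} and \eqref{affcov}. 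I expect the only genuinely nonroutine step to be the transformation laws for $N^{i}$ and $\sff^{ij}$ when $\H(F)$ may vanish --- one cannot simply differentiate $F^{ij}=\H(F)^{-1}U^{ij}$ --- which is precisely why the argument runs through nondegeneracy of the level set and the $q$-independent characterization of $\sff^{ij}$ from Lemma~\ref{unondegenlemma} rather than through $F^{ij}$.
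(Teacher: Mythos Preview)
Your proposal is correct and follows essentially the same route as the paper: establish the transformation laws of $\U(F)$, $\H(F)$, $N^{i}$, $\sff_{ij}$, $\sff^{ij}$, and $\muf$ under $F\mapsto\psi\circ F$ and $F\mapsto g\cdot F$, then feed them into \eqref{nmdefined} and observe that the anomaly in $\muf$ is killed by $\sff^{ip}F_{p}=0$.

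The one place you differ from the paper is the derivation of $N^{i}(\psi\circ F)=\dot\psi^{\,n+1}N^{i}(F)$. The paper first establishes $\tilde\sff^{ij}=\dot\psi^{-1}\sff^{ij}$ and then extracts the law for $N^{i}$ from the identity \eqref{sffsff}, namely $\U(F)^{-1}F_{i}N^{j}=\delta_{i}{}^{j}-\sff^{jp}\sff_{ip}$, whose right-hand side is manifestly reparameterization-invariant. You instead argue directly that the difference $\tilde N^{i}-\dot\psi^{\,n+1}N^{i}$ is tangent to the level sets and lies in the kernel of the restricted Hessian, hence vanishes by nondegeneracy. Your argument is slightly more self-contained (it does not require $\sff^{ij}$ first) and uses only Lemma~\ref{nondegenlemma}; the paper's argument has the virtue of making explicit the algebraic mechanism via \eqref{sffsff}. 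Either way the remaining assembly is identical.
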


\begin{proof}
For readability there are written $\dot{\psi}$, $\ddot{\psi}$, etc. instead of $\dot{\psi}(F)$, $\ddot{\psi}(F)$, etc. 
The objects derived from $\psi \circ F$ in the same manner as those derived from $F$ are decorated with a $\tilde{\,\,\,}$. For example \eqref{gpsif} is written $\tilde{F}_{ij} = \dot{\psi}F_{ij} + \ddot{\psi}F_{i}F_{j}$. By \eqref{gpsif} and \eqref{hpsif}, the tensor $\sff_{ij} = F_{ij} - \H(F)\U(F)^{-1}F_{i}F_{j}$ defined in Lemma \ref{unondegenlemma} transforms by
\begin{align}\label{hnpsif}
\tilde{\sff}_{ij} = \dot{\psi}\sff_{ij}.
\end{align}
By Lemma \ref{unondegenlemma}, the restriction of $\tilde{\sff}^{ij}$ to a level set of $(\psi \circ F)$ is the inverse of the restriction of $\tilde{\sff}_{ij} = \dot{\psi}\sff_{ij}$ to this level set. Since this level set is also a level set of $F$ and the restriction to it of $\tilde{\sff}^{ij}$ is the inverse of the restriction of $\sff_{ij}$ it follows that 
\begin{align}\label{uppsif}
\tilde{\sff}^{ij} = \dot{\psi}^{-1}\sff^{ij}.
\end{align}
By \eqref{upsif} and \eqref{hnpsif}, the tensor $k_{ij} = |\U(F)|^{-1/(n+2)}\sff_{ij}$ defined in \eqref{emnm}
transforms as $\tilde{k}_{ij} = \sign(\dot{\psi})k_{ij}$. Since the tensor $k^{ij} = |\U(F)|^{1/(n+2)}\sff^{ij}$ satisfies $k^{ip}F_{p} = 0$ it makes sense to speak of the restriction of $k^{ij}$ to a level set of $F$, and, since $k^{ip}k_{pj} = \delta_{j}\,^{i} - \U(F)^{-1}F_{j}N^{i}$, this restriction is the inverse of the restriction of $k_{ij}$.
By \eqref{sffsff}, \eqref{upsif}, \eqref{hnpsif}, and \eqref{uppsif}
\begin{align}
\begin{split}
\U(F)^{-1}F_{i}N^{j} &= \delta_{i}\,^{j} - \sff^{jp}\sff_{ip} = \delta_{i}\,^{j} - \tilde{\sff}^{jp}\tilde{\sff}_{ip} \\ &= \U((\psi \circ F))^{-1}\tilde{F}_{i}\tilde{N}^{j} = \dot{\psi}^{-n-1}\U(F)^{-1}F_{i}\tilde{N}^{j}.
\end{split}
\end{align}
Since $\U(F)^{-1}F_{i}$ does not vanish, this shows
\begin{align}\label{psifn}
\tilde{N}^{i} = \tilde{U}^{ip}(\psi \circ F)_{p} = \dot{\psi}^{n+1}N^{i}.
\end{align}
Together \eqref{upsif} and \eqref{psifn} show that $-|\U(F)|^{1/(n+2)}\U(F)^{-1}N^{i}$ satisfies \eqref{extre}.
It follows from \eqref{upsif} that the one-form $\muf = (n+2)^{-1}d\log\U(F)$ transforms by 
\begin{align}\label{mupsif}
\muf(\psi \circ F)_{i} = \muf_{i} + (\ddot{\psi}/ \dot{\psi})F_{i}.
\end{align}
The vector field $Z^{i} = \U(F)\sff^{ip}\muf_{p}$ satisfies $Z^{p}F_{p} = 0$ and $Z^{p}\sff_{ip} = \U(F)\muf_{i} - N^{p}\muf_{p}F_{i}$. By \eqref{upsif}, \eqref{uppsif}, and \eqref{mupsif}, $\tilde{Z}^{i} = \dot{\psi}^{n+1}Z^{i}$. By \eqref{hgf}, \eqref{hpsif}, \eqref{upsif}, and \eqref{psifn}, the vector field
\begin{align}\label{nm1}
\begin{split}
\nm^{i} &= - \sign(\U(F))|\U(F)|^{-(n+1)/(n+2)}\left(N^{i} + Z^{i}\right) = -|\U(F)|^{1/(n+2)}\U(F)^{-1}N^{i} - k^{ip}\muf_{p},
\end{split}
\end{align}
transforms by $\tilde{\nm}^{i} = \sign(\dot{\psi})\nm^{i}$. This shows that $\nm$ is invariant under external reparameterizations.

There remains to check \eqref{affcov}. Now, write $\tilde{F}_{ij} = (g \cdot F)_{ij}$ for $g \in \Aff(n+1, \rea)$. More generally, a tensor or scalar constructed from $g\cdot F$ is indicated using the same notation as that indicating the corresponding tensor or scalar constructed from $F$, but decorated with a $\tilde{\,}\,$. By construction $L_{g}^{\ast}(d(g\cdot F)) = dF$ and $L_{g}^{\ast}(\hess(g\cdot F)) = \hess F$, where $L_{g}$ indicates the action by left multiplication of $g \in \Aff(n+1, \rea)$ on $\rea^{n+1}$. It is convenient to write $\ell_{i}\,^{j} = \ell(g)_{i}\,^{j}$ and $\bar{\ell}_{i}\,^{j}$ for the inverse endomorphism (so $\bar{\ell}_{p}\,^{j}\ell_{i}\,^{p} = \delta_{i}\,^{j}$). Then $(g\cdot F)_{i}(x) = \bar{\ell}_{i}\,^{p}F_{p}(g^{-1}x)$ and $\tilde{F}(x)_{ij} = \bar{\ell}_{i}\,^{p}\bar{\ell}_{j}\,^{q}F_{pq}(g^{-1}x)$. With \eqref{hgf} it follows that $\tilde{U}^{ij}(x) = (\det \ell(g))^{-2}\ell_{a}\,^{i}\ell_{b}\,^{j}U^{ab}(g^{-1}x)$ and so $\tilde{N}^{i}(x) = (\det \ell(g))^{-2}\ell_{a}\,^{i}N^{a}(g^{-1}x)$. Equivalently, $(\det \ell(g))^{-2}L_{g^{-1}}^{\ast}(N)^{i} = \tilde{N}^{i}$. This suffices to show that $-|\U(F)|^{1/(n+2)}\U(F)^{-1}N^{i}$ satisfies \eqref{affcov}. From \eqref{hgf} it follows that the one-form $(n+2)\muf(F)_{i} = d_{i}\log \U(F)$ satisfies $\muf(g\cdot F) = L_{g^{-1}}^{\ast}\muf(F)$. Similarly, $\tilde{\sff}_{ij}(x) = \bar{\ell}_{i}\,^{p}\bar{\ell}_{j}\,^{q}\sff_{pq}(g^{-1}x)$. From \eqref{sffsff} and \eqref{hgf} it follows that $\tilde{\sff}^{ij} = \ell_{p}\,^{i}\ell_{q}\,^{j}\sff^{pq}(g^{-1}x)$. It follows that $\tilde{Z}^{i}(x) = (\det \ell(g))^{-2}\ell_{a}\,^{i}Z^{a}(g^{-1}x)$. Assembling the preceding shows $\nm$ satisfies \eqref{affcov}.

Alternatively, Theorem \ref{affinenormalsametheorem} below shows that along a level set of $F$ the transversal $\nm^{i}$ agrees with the equiaffine normal of the level set, as usually defined, and the affine covariance \eqref{affcov} is true by construction for the usual affine normal, so there is really no need to check it directly. (Note that the same argument does not apply to show the equiaffine invariance of $-|\U(F)|^{1/(n+2)}\U(F)^{-1}N^{i}$.)
\end{proof}

\begin{example}\label{graphsection}
Here it is shown how to recover from \eqref{nm1} the usual formula (see, for example, equation $(3.4)$ in \cite{Nomizu-Sasaki}) for the equiaffine normal of a graph. 

Let $\ste$ be an $(n+1)$-dimensional vector space equipped with the standard equiaffine structure $(\nabla, \Psi)$, where $\Psi$ is given by the determinant, and let $\ste = \stw \oplus \stl$, where the subspace $\stw$ has codimension one. Let $0 \neq v \in \ste$ span $\stl$ and let $\nu \in \std$ satisfy $\nu(v) = 1$ and $\ker \nu = \stw$. Equip $\stw$ with the induced affine structure and the parallel volume form $\om = \imt(v)\Psi$ and define the operator $\H$ on $\stw$ with respect to this induced equiaffine structure. Given $f \in \cinf(\stw)$ define $F = \nu - f\circ \pi$, where $\pi:\ste \to \stw$ is the projection along $\stl$.
It is convenient to abuse notation by suppressing $\pi$ and identifying $f$ with its pullback $f \circ \pi$ to $\stw$. The graph $\{(w, f(w)v) \in \stw \oplus \stl\}$ of $f$ along $v$ is contained in the level set $\Sigma = \{x \in \ste: F(x) = 0\}$. Lowercase Latin indices indicate tensors on $\ste$ while uppercase Latin indices indicate tensors on $\stw$. Tensors on $\stl$ are written as functions, without labels. 
Using block matrix notation in a formal way to indicate the decomposition of tensors corresponding to the splitting $\ste = \stw \oplus \stl$,
\begin{align}\label{hessG}
&F_{i} = \begin{pmatrix} -f_{I} & 1 \end{pmatrix},&
&F_{ij} = \begin{pmatrix} -f_{IJ} & 0\\ 0 & 0\end{pmatrix},&
&U^{ij} = \begin{pmatrix} 0 & 0\\ 0 & (-1)^{n}\H(f)\end{pmatrix}.&
\end{align}
From \eqref{hessG} it is apparent that $\H(F) = 0$, $N = U^{ip}F_{p} = (-1)^{n}\H(f)v$, $\U(F) =  (-1)^{n}\H(f)$, and $(n+2)\muf = d\log \H(f)$. Since, by Lemma \ref{nondegenlemma}, $\Sigma$ is nondegenerate at $p \in \Sigma$ if and only if $\U(F)$ is nonzero at $p$, it follows that $\Sigma$ is nondegenerate at $p$ if and only if $\H(f)$ is nonzero at the projection of $p$ onto $\stw$ along $\stl$. In this case, let $f^{IJ}$ be the symmetric bivector on $\stw$ inverse to the Hessian $f_{IJ} = \nabla_{I}df_{J}$ and let $Z^{I}$ be the pushforward of $f^{IQ}d_{Q}\log|\H(f)|^{1/(n+2)}$ via the immersion $\stw \to \ste$ given by $w \to w + f(w)v$. Since $\muf(N) = 0$ and $h_{ij} = (-1)^{n}\H(f)F_{ij}$, it follows from \eqref{nm1} that the equiaffine normal of the graph of $f$ along $v$ has the expression $\nm = -|\H(f)|^{1/(n+2)}(v - Z)$.  

After an equiaffine transformation, $\ste$, $\stw$, $\L$, and the associated connections and volume forms can always be put in the following standard form. Let $\ste = \rea^{n+1}$ and $\Psi = dx^{1}\wedge \dots \wedge dx^{n+1}$, and regard $\rea^{n}$ as the subspace $\stw = \{x \in \ste: x^{n+1} = 0\}$ with the induced connection, also written $\nabla$, and the volume form $\om =dx^{1}\wedge \dots \wedge dx^{n}$. Here $\nu = dx^{n+1}$ and $F(x_{1}, \dots, x_{n+1}) = x_{n+1} - f(x_{1}, \dots, x_{n})$. 
\end{example}

Once a co-orientation has been fixed, let $\nabla$, $k$, and $S$ be the induced connection, metric, and shape operator of the level set of $F$ determined by the co-oriented equiaffine normal. The pseudo-Riemannian metric $k$ is the \textit{equiaffine} metric, $S$ is the \textit{equiaffine shape operator}, and the \textit{equiaffine mean curvature} $\amc$ is the mean curvature with respect to $\nm$. 

Lemma \ref{amclemma} gives explicit expressions for $S$ and $\amc$. The proof of Lemma \ref{amclemma} uses the identity $\hnabla_{p}U^{ip} = 0$. Differentiating $U^{ip}F_{pj} = \H(F)\delta_{j}\,^{i}$ yields $F_{pj}\hnabla_{k}U^{ip} + U^{ip}F_{jkp} = \H(F)_{k}\delta_{j}\,^{i}$. Contracting this in $i$ and $k$ and using $U^{pq}F_{ipq} = \H(F)_{i}$ gives $F_{pj}\hnabla_{q}U^{qp} = 0$. Hence $\H(F)\hnabla_{p}U^{ip} = U^{ij}F_{pj}\hnabla_{q}U^{qp} = 0$, so when $\H(F) \neq 0$ there holds $\hnabla_{p}U^{ip} = 0$. However, in the proof this fact is needed when $\H(F) = 0$. In this generality the claim is a special case of a general identity for the Newton transforms of a symmetric endomorphism proved in Proposition $2.1$ of \cite{Reilly}.

\begin{lemma}\label{amclemma}
Let $F$ be a $\cinf$ function defined on an open subset of $\rea^{n+1}$. Let $\Omega$ be a connected component with nonempty interior of the region on which $\U(F)$ does not vanish. For $r \in \rea$, let $\lc_{r}(F, \Omega) = \{x \in \Omega: F(x) = r\}$. The equiaffine mean curvature $\amc$ of $\lc_{r}(F, \Om)$ is given by
\begin{align}\label{amc}
\begin{split}
n\amc &= \hnabla_{p}(k^{pq}\muf_{q}) + (n+2)|\U(F)|^{1/(n+2)}\U(F)^{-1}(\H(F) - N^{p}\muf_{p})\\
& =  \hnabla_{p}(k^{pq}\muf_{q}) + (-1)^{n+1}(n+2)\sign(\U(F))(\det \hnabla \rho) /\Psi^{2},
\end{split}
\end{align}
where  the \textbf{equiaffine conormal one-form} $\rho_{i} = -|\U(F)|^{-1/(n+2)}F_{i}$ annihilates the tangent space to the level sets of $F$ and satisfies $\rho_{i}W^{i} = 1$.
\end{lemma}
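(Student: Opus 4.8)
The plan is to compute $n\amc = \tr S$ directly from the relations \eqref{induced} defining the transversal $\nm$, exploiting that both $\nm$ and the equiaffine conormal $\rho_{i} = -|\U(F)|^{-1/(n+2)}F_{i}$ are defined on all of $\Om$, not merely along the individual level sets. I would first record that $\rho_{i}\nm^{i} = 1$ on $\Om$ (immediate from $N^{i}F_{i} = \U(F)$ and $\sff^{ip}F_{i} = 0$) together with the pointwise identity, valid at each point of $\lc_{r}(F, \Om)$,
\begin{align}\label{divtrace}
\hnabla_{i}\nm^{i} = \rho_{j}\nm^{i}\hnabla_{i}\nm^{j} - \tr S,
\end{align}
which comes from evaluating the trace of the endomorphism $X \mapsto \hnabla_{X}\nm$ in a frame $\{\nm, e_{1}, \dots, e_{n}\}$ adapted to the splitting $T\Om|_{\lc_{r}(F,\Om)} = T\lc_{r}(F,\Om) \oplus \langle\nm\rangle$, the dual coframe of which carries $\rho$ in the slot dual to $\nm$: the tangential inputs $e_{a}$ contribute $-\tr S$ through $\hnabla_{e_{a}}\nm = -S(e_{a}) + \tau(e_{a})\nm$, while the input $\nm$ contributes the $\nm$-component $\rho_{j}\nm^{i}\hnabla_{i}\nm^{j}$ of $\hnabla_{\nm}\nm$. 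Differentiating $\rho_{i}\nm^{i} = 1$ yields $\rho_{j}\nm^{i}\hnabla_{i}\nm^{j} = -\nm^{i}\nm^{j}\hnabla_{i}\rho_{j}$, so that \eqref{divtrace} rearranges to
\begin{align}\label{trS}
n\amc = \tr S = -\nm^{i}\nm^{j}\hnabla_{i}\rho_{j} - \hnabla_{i}\nm^{i},
\end{align}
and it remains to compute the two terms on the right.

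For the divergence term, I would write $\nm^{i} = -\alpha N^{i} - \beta\sff^{ip}\muf_{p}$ with $\alpha = |\U(F)|^{1/(n+2)}\U(F)^{-1}$ and $\beta = |\U(F)|^{1/(n+2)}$, so that $k^{ij} = \beta\sff^{ij}$. From $\muf = (n+2)^{-1}d\log\U(F)$ one has $\hnabla_{i}\alpha = -(n+1)\alpha\muf_{i}$ and $\hnabla_{i}\beta = \beta\muf_{i}$; combining these with $\hnabla_{i}U^{ip} = 0$ (the Newton-transform identity of \cite{Reilly} recalled above, which is exactly what lets $\H(F)$ vanish here) and $U^{ip}F_{pi} = (n+1)\H(F)$ (tracing $U^{ip}F_{pj} = \H(F)\delta_{j}\,^{i}$), and recognizing $\hnabla_{i}(\beta\sff^{ip}\muf_{p}) = \hnabla_{p}(k^{pq}\muf_{q})$, one obtains
\begin{align}\label{divnm}
\hnabla_{i}\nm^{i} = -(n+1)|\U(F)|^{1/(n+2)}\U(F)^{-1}\bigl(\H(F) - N^{p}\muf_{p}\bigr) - \hnabla_{p}(k^{pq}\muf_{q}).
\end{align}
For the conormal term, from $\rho_{i} = -|\U(F)|^{-1/(n+2)}F_{i}$ one computes $\hnabla_{i}\rho_{j} = -|\U(F)|^{-1/(n+2)}(F_{ij} - \muf_{i}F_{j})$. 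Substituting $\nm$ and contracting, the ingredients are $\nm^{i}F_{i} = -|\U(F)|^{1/(n+2)}$ (from $\rho_{i}\nm^{i} = 1$), $N^{j}F_{ij} = \H(F)F_{i}$, and $\sff^{jp}F_{ij}\muf_{p} = \muf_{i} - \U(F)^{-1}(N^{p}\muf_{p})F_{i}$, the last via \eqref{sffsff}; after cancellations one obtains $-\nm^{i}\nm^{j}\hnabla_{i}\rho_{j} = |\U(F)|^{1/(n+2)}\U(F)^{-1}(\H(F) - N^{p}\muf_{p})$. Substituting this and \eqref{divnm} into \eqref{trS} gives the first formula in \eqref{amc}. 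For the second, I would compute $\det(\hnabla_{i}\rho_{j}) = (-1)^{n+1}|\U(F)|^{-(n+1)/(n+2)}\det(F_{ij} - \muf_{i}F_{j})$, apply the rank-one identity \eqref{rankone} to get $\det(F_{ij} - \muf_{i}F_{j}) = (\H(F) - N^{p}\muf_{p})\Psi^{2}$ (the correction term being $-U^{ij}F_{i}\muf_{j}\,\Psi^{2} = -(N^{p}\muf_{p})\Psi^{2}$), and then use $|\U(F)|^{-(n+1)/(n+2)} = \sign(\U(F))|\U(F)|^{1/(n+2)}\U(F)^{-1}$ to match the two forms.

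The main obstacle I expect is bookkeeping rather than anything conceptual: getting \eqref{divtrace} right with the correct $\hnabla_{\nm}\nm$ contribution (a naive identification of $\tr S$ with $-\hnabla_{i}\nm^{i}$ is off by exactly that term), and making sure that every identity used — especially $\hnabla_{i}U^{ip} = 0$ and the definition and contraction properties of $\sff^{ij}$ supplied by Lemma \ref{unondegenlemma} — is invoked in the form that stays valid where $\H(F) = 0$, the regime in which the naive formulas built from $F^{ij} = \H(F)^{-1}U^{ij}$ fail.
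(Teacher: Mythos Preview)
Your argument is correct, and it takes a genuinely different route from the paper's. The paper differentiates $\nm$ directly to obtain the full $(1,1)$-tensor $\hnabla_{i}\nm^{j}$ (equation \eqref{amc2}), contracts with $\rho_{j}$ to isolate the part proportional to $F_{i}$, writes down the explicit shape operator $S_{i}\,^{j}$ (equation \eqref{equishape}), verifies $S_{i}\,^{p}F_{p} = 0$, and only then takes the trace. You instead short-circuit the tensor computation with the scalar identity $\tr S = -\nm^{i}\nm^{j}\hnabla_{i}\rho_{j} - \hnabla_{i}\nm^{i}$, reducing the problem to two scalar computations: the ambient divergence of $\nm$ and a quadratic form in $\nm$ of $\hnabla\rho$. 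The underlying ingredients are the same (the divergence-free identity $\hnabla_{p}U^{ip} = 0$, the contraction formula \eqref{sffsff}, and the rank-one determinant identity \eqref{rankone} for the second line of \eqref{amc}), so the difference is organizational rather than conceptual. The paper's route gives more --- the explicit shape operator \eqref{equishape} is obtained as a byproduct --- while yours is leaner if the mean curvature is all that is wanted; your frame identity \eqref{divtrace} also makes transparent why the ``naive'' identification $\tr S = -\hnabla_{i}\nm^{i}$ fails by exactly the $\hnabla_{\nm}\nm$ term.
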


\begin{proof}
Differentiating \eqref{nm1} and using $\hnabla_{i}N^{j} = \H(F)\delta_{i}\,^{j} + F_{p}\hnabla_{i}U^{jp}$ yields
\begin{align}\label{amc2}
\hnabla_{i}\nm^{j} & = -|\U(F)|^{1/(n+2)}\U(F)^{-1}\left(\H(F)\delta_{i}\,^{j} - F_{p}\hnabla_{i}U^{jp} + (n+1)\muf_{i}N^{j} \right) - \hnabla_{i}(k^{jp}\muf_{p}).
\end{align}
Contracting \eqref{amc2} with $\rho_{j}$ and using 
\begin{align}\label{duf}
d_{i}\U(F) = 2\H(F)F_{i} + F_{p}F_{q}\hnabla_{i}U^{pq}
\end{align}
and
\begin{align}
\begin{split}
k^{jp}F_{ip} &= |\U(F)|^{1/(n+2)} \sff^{jp}(\sff_{ip} + \U(F)^{-1}\H(F)F_{j}F_{p}) \\
&= |\U(F)|^{1/(n+2)}(\delta_{j}\,^{i} - \U(F)^{-1}F_{j}N^{i})
\end{split}
\end{align}
yields $\rho_{j}\hnabla_{i}\nm^{j} = \U(F)^{-1}(N^{p}\muf_{p} - \H(F))F_{i}$. Consequently,
\begin{align}\label{equishape}
\begin{split}
S_{i}\,^{j} & = -\hnabla_{i}\nm^{j} + \U(F)^{-1}(N^{p}\muf_{p} - \H(F))F_{i}\nm^{j} \\
& = \hnabla_{i}(k^{jp}\muf_{p})  + |\U(F)|^{1/(n+2)}\U(F)^{-1}\left(\H(F)\delta_{i}\,^{j} - F_{p}\hnabla_{i}U^{jp}\right.\\&\qquad \left. + (n+1)\muf_{i}N^{j}  + (\H(F) - N^{p}\muf_{p})\rho_{i}\nm^{j}\right),
\end{split}
\end{align}
satisfies $S_{i}\,^{p}F_{p} = 0$, so it makes sense to speak of the restriction $S_{I}\,^{J}$ of $S_{i}\,^{j}$ to the tangent bundle of $\lc_{r}(F, \Om)$, and $S_{I}\,^{J}$ is the equiaffine shape operator of $\lc_{r}(F, \Om)$. From \eqref{equishape} it follows that $n\amc = S_{I}\,^{I} = S_{i}\,^{i}$ equals the first expression in \eqref{amc}.  Differentiating $\rho_{i}$ shows
\begin{align}\label{hnablanu}
\hnabla_{i}\rho_{j} = -|\U(F)|^{-1/(n+2)}F_{ij} - \muf_{i}\rho_{j} = -k_{ij} + (\U(F)^{-1}\H(F)F_{i} - \muf_{i})\rho_{j}.
\end{align}
Applying \eqref{rankone} to \eqref{hnablanu} yields
\begin{align}\label{dethrho}
\det \hnabla \rho = (-1)^{n+1}|\U(F)|^{-(n+1)/(n+2)}(\H(F) - N^{p}\muf_{p})\Psi^{2}.
\end{align}
The second equality of \eqref{amc} follows from \eqref{dethrho}.
\end{proof}

Theorem \ref{affinecovariancetheorem} shows that the conditions \eqref{extre} and \eqref{affcov} do not characterize the transversal $\nm^{i}$ because $Q^{i}= -|\U(F)|^{1/(n+2)}\U(F)^{-1}N^{i}$ also satisfies these conditions. By the proof of Lemma \ref{amclemma}, the one forms $F_{p}\hnabla_{i}\nm^{p}$ and $F_{i}$ are proportional. This is not true for $Q^{i}$. Computations using \eqref{sffsff} show that, for any one-form $\si_{i}$, there holds $F_{j}\hnabla^{i}(k^{jp}\si_{p}) = -|\U(F)|^{1/(n+2)}(\si_{i} - \U(F)^{-1}N^{p}\muf_{p}F_{i})$. Consequently if $M^{i} = \nm^{i} + k^{ip}\si_{p}$, then $F_{p}F_{[i}\hnabla_{j]}M^{p} = \rho_{[i}\si_{j]}$, and so $F_{p}F_{[i}\hnabla_{j]}M^{p} = 0$ if and only if $\si_{i}$ is a multiple of $F_{i}$. Since $Q^{i} = \nm^{i} + k^{ip}\muf_{p}$, there holds $F_{p}F_{[i}\hnabla_{j]}Q^{p} = 0$ for all $F \in \uinf(\Om)$ if and only if $\muf \wedge dF = 0$ for all $F \in \uinf(\Om)$. It is straightforward to find $F$ for which this is false, and so the condition that $F_{p}F_{[i}\hnabla_{j]}\A(F)^{p} = 0$ selects $\nm^{i}$ in place of $Q^{i}$.

By \eqref{nmdefined}, $\nm$ and $-|\U(F)|^{1/(n+2)}\U(F)^{-1}N^{i}$ coincide along a level set of $F$ where $\U(F)$ is constant. By the following observation due to R. Reilly, given a level set of $F$ there can always be found a function $G$ such that $G$ and $\U(G)$ are constant along the given level. 

\begin{lemma}[R. Reilly; Proposition $4$ of \cite{Reilly-affine}]
Let $F$ be a $\cinf$ function defined on an open subset of $\rea^{n+1}$. Let $\Omega$ be a connected component with nonempty interior of the region on which $\U(F)$ does not vanish. For $r \in F(\Om)$ let $G = |\U(F)|^{-1/(n+2)}(F - r)$. Then $|\U(G)| = 1$ along $\lc_{r}(F, \Om) = \lc_{0}(G, \Om)$. 
\end{lemma}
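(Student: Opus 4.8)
The plan is to prove the final lemma by a direct computation using the transformation law for $\U$ under external reparameterization (namely \eqref{upsif}) together with the chain rule, exactly as in the proof of Lemma \ref{uinflemma} but specialized to the affine reparameterization $\psi(s) = |\U(F)|^{-1/(n+2)}(s-r)$. The subtlety is that here the ``reparameterizing function'' is not a fixed diffeomorphism of an interval, because $\U(F)$ is not constant; instead $G$ has the form $a(F-r)$ where $a = |\U(F)|^{-1/(n+2)}$ is itself a function on $\Om$. So the clean path is to compute $\U(G)$ directly from the definition \eqref{ufdet}, keeping track of the derivatives of $a$, and then restrict to the level set $\lc_r(F,\Om) = \{F = r\}$, on which the terms involving $da$ are killed by the factor $(F-r)$.

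Concretely, first I would record $G_i = a F_i + (F-r)a_i$ and $G_{ij} = a F_{ij} + 2 a_{(i}F_{j)} + (F-r)a_{ij}$. Then I would substitute these into the determinantal formula $\U(G) = -\begin{vmatrix} G_{ij} & G_i \\ G_j & 0\end{vmatrix}$. The key point is that, \emph{along $\lc_r(F,\Om)$}, the factor $F - r$ vanishes, so $G_i = a F_i$ and $G_{ij} = a F_{ij} + 2 a_{(i}F_{j)}$ there. The extra term $2a_{(i}F_{j)}$ is a rank-one symmetric perturbation built from $F_i$, hence lies in the span of the last row/column of the bordered matrix; by the usual row/column operations (as used repeatedly in the excerpt, e.g. in the derivation of \eqref{ufg1}) it contributes nothing to the bordered determinant. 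Therefore, along $\lc_r(F,\Om)$, $\U(G) = -a^{n+2}\begin{vmatrix} F_{ij} & F_i \\ F_j & 0\end{vmatrix} = a^{n+2}\U(F) = |\U(F)|^{-1}\U(F) = \sign \U(F)$, so $|\U(G)| = 1$ there. Finally I would note $\lc_r(F,\Om) = \lc_0(G,\Om)$ since $G = 0 \iff F = r$ (as $a$ is nowhere zero), which gives the stated identification of level sets.

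Alternatively, and perhaps more in the spirit of the surrounding text, one can avoid recomputing by invoking Lemma \ref{unondegenlemma}\eqref{upij} with the auxiliary function $q$: the point is precisely that $\U$ restricted to a level set depends only on the $1$-jet and $2$-jet of $F$ transverse to that level set modulo terms proportional to $dF$, and multiplying $F - r$ by a nowhere-zero function $a$ changes the $2$-jet only by such terms plus a multiple of $dF \otimes dF$, which rescale $\U$ by exactly $a^{n+2}$ when $F = r$. Either way the computation is short.

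The main obstacle — such as it is — is purely bookkeeping: being careful that the $(F-r)a_{ij}$ and $(F-r)a_i$ terms genuinely vanish on the level set before the determinant is taken (they do, pointwise on $\lc_r(F,\Om)$), and that the surviving rank-one term $2a_{(i}F_{j)}$ drops out of the bordered determinant. There is no real analytic difficulty; the only thing to check is that $a = |\U(F)|^{-1/(n+2)}$ is smooth and nowhere vanishing on $\Om$, which holds because $\U(F)$ is smooth, real, and nonvanishing on $\Om$ by hypothesis (so $G \in \uinf(\Om)$ as well, by Lemma \ref{uinflemma} applied piecewise, though this is not needed for the statement).
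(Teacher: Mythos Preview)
Your proposal is correct and follows essentially the same approach as the paper's own proof: both write $G = q(F-r)$ with $q = |\U(F)|^{-1/(n+2)}$, compute $G_i$ and $G_{ij}$, plug into the bordered determinant \eqref{ufdet}, and use row/column operations to kill the symmetric rank-one terms built from $F_i$. The only cosmetic difference is that the paper carries the $(F-r)$ terms through one more determinantal simplification before restricting to $\lc_r(F,\Om)$, whereas you restrict first; both routes arrive at $\U(G) = q^{n+2}\U(F) = \sign(\U(F))$ on the level set.
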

\begin{proof}
Let $q \in \cinf(\Om)$ be nonvanishing and define $G = q(F - r)$. By \eqref{ufdet} and elementary determinantal computations
\begin{align}\label{rr1}
\begin{split}
-\U(G) & = \begin{vmatrix} qF_{ij} + 2F_{(i}q_{j)} + (F-r)q_{ij} & qF_{i} + (F-r)q_{i} \\  qF_{j} + (F-r)q_{j} & 0 \end{vmatrix}\\
& = q^{n+2}\begin{vmatrix} F_{ij} - (F - r)q(q^{-1})_{ij}  & F_{i} + (F - r)q^{-1}q_{i}\\ F_{j} + (F - r)q^{-1}q_{j}  & 0 \end{vmatrix}, 
\end{split}
\end{align}
and, again by \eqref{ufdet}, along $\lc_{r}(F, \Om)$ the last expression in \eqref{rr1} equals $-q^{n+2}\U(F)$. Hence, if $q = |\U(F)|^{-1/(n+2)}$, then $\U(G) = \sign(\U(F))$.
\end{proof}
It is not generally possible to find a $G$ having the same level sets as $F$ and having $\U(G)$ constant along each level set. The obstruction is identified in Lemma \ref{uflevellemma}. On the other hand, when it is possible, then there can be found $G$ locally constant on the level sets of $F$ and satisfying $|\U(G)| = 1$.

A version of Lemma \ref{uflevellemma}, with the additional hypothesis that $\H(F) \neq 0$ and slightly weaker conclusions was stated in \cite{Fox-prehom}.

\begin{lemma}\label{uflevellemma}
Let $F$ be a $\cinf$ function defined on an open subset of $\rea^{n+1}$. Let $\Omega$ be a connected component with nonempty interior of the region on which $\U(F)$ does not vanish. For $r \in F(\Om)$, let $\lc_{r}(F, \Omega) = \{x \in \Omega: F(x) = r\}$ and let $\nm$ be its equiaffine normal field \eqref{nm1}. The following are equivalent:
\begin{enumerate}
\item\label{ufc1} $\U(F)$ is locally constant on $\lc_{r}(F, \Om)$ for all $r \in F(\Om)$.
\item\label{ufc2} The equiaffine conormal one-form $\rho$ is closed, $d\rho = 0$.
\item\label{ufc2b} $\muf \wedge \rho = 0$.
\item\label{ufc3} The local flow generated by $\nm$ preserves the level sets of $F$.
\item\label{ufc3b} $\rho$ is constant along the local flow generated by $\nm$.
\item\label{ufc4} $\nm \wedge N = 0$.
\item\label{ufc5} $\nm = -\sign(\U(F))|\U(F)|^{-(n+1)/(n+2)}N$.
\end{enumerate}
If there hold \eqref{ufc1}-\eqref{ufc5} then:
\begin{enumerate}
\setcounter{enumi}{7}
\item The equiaffine mean curvature $\amc$ of $\lc_{r}(F, \Om)$ satisfies
\begin{align}\label{namc}
\begin{split}
n\amc =& (n+2)\sign(\U(F))|\U(F)|^{-(n+1)/(n+2)}(\H(F) - N^{p}\muf_{p}). 
\end{split}
\end{align}
\item \label{grho} If $\Om$ is simply connected, there is $G \in \cinf(\Om)$ satisfying $dG = -\rho$. Then $G$ is locally constant on each level set $\lc_{r}(F, \Om)$ and satisfies $\U(G) = \sign\U(F)$, and so $|\U(G)| = 1$, in $\Om$.
\end{enumerate}
\end{lemma}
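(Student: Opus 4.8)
The plan is to funnel all seven conditions \eqref{ufc1}--\eqref{ufc5} through a single hub condition, namely $(\star)$: $\muf\wedge\rho=0$ on $\Om$, which is literally \eqref{ufc2b} and which, since $\rho_i=-|\U(F)|^{-1/(n+2)}F_i$ is a nowhere-vanishing rescaling of $F_i$, is the same as $\muf\wedge dF=0$ (note $dF\neq 0$ on $\Om$, as $\U(F)=U^{ij}F_iF_j$ cannot vanish where $dF$ does). Throughout I will use, from Lemmas \ref{unondegenlemma} and \ref{affinenormaldefinitionlemma}: that $N^i=U^{ip}F_p$ is nowhere zero and transverse to the level sets (since $N^iF_i=\U(F)\neq 0$); that $\sff^{ij}F_j=0$ while $\sff^{ij}$ has rank $n$, so that the kernel of the symmetric tensor $\sff^{ip}$, viewed as a map from one-forms to vectors, is exactly the line $\langle F_i\rangle$; and the expression $\nm^i=-\sign(\U(F))|\U(F)|^{-(n+1)/(n+2)}(N^i+Z^i)$ from \eqref{nm1}, in which $Z^i=\U(F)\sff^{ip}\muf_p$ is tangent to the level sets ($Z^iF_i=0$).

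For the equivalences: \eqref{ufc1}$\Leftrightarrow(\star)$ is immediate because $\muf$ is a nonzero multiple of $d\U(F)$, so $(\star)$ says precisely that $d\U(F)$ annihilates $\ker dF$, i.e.\ that $\U(F)$ is locally constant on level sets. For \eqref{ufc2}$\Leftrightarrow(\star)$, antisymmetrize \eqref{hnablanu}: the $k_{ij}$ and $\U(F)^{-1}\H(F)F_iF_j$ terms are symmetric and $F_{[i}\rho_{j]}=0$, leaving $d\rho=-\muf\wedge\rho$. For \eqref{ufc3b}$\Leftrightarrow(\star)$, I compute $\rho_i\nm^i=1$ directly from \eqref{nm1} (using $N^iF_i=\U(F)$ and $k^{ip}F_i=0$), so Cartan's formula gives $\mathcal{L}_{\nm}\rho=\imt(\nm)\,d\rho=\muf-\muf(\nm)\rho$, which vanishes iff $\muf\in\langle\rho\rangle$. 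For \eqref{ufc3}$\Leftrightarrow(\star)$, note $\nm(F)=-|\U(F)|^{1/(n+2)}$ from \eqref{nm1}; the flow of $\nm$ carries level sets to level sets iff it preserves the integrable distribution $\ker dF$, iff $\mathcal{L}_{\nm}dF=d(\nm(F))$ is pointwise proportional to $dF$, iff $d|\U(F)|^{1/(n+2)}\wedge dF=0$, iff $(\star)$. Finally, for \eqref{ufc4} and \eqref{ufc5}: since the scalar factor in \eqref{nm1} is nonzero and $N$ is nowhere zero, $\nm\wedge N=0\Leftrightarrow Z\wedge N=0\Leftrightarrow Z=0$ (if $Z=\alpha N$ then $0=Z^iF_i=\alpha\U(F)$ forces $\alpha=0$), and likewise $\nm=-\sign(\U(F))|\U(F)|^{-(n+1)/(n+2)}N\Leftrightarrow Z=0$; and $Z=0\Leftrightarrow\sff^{ip}\muf_p=0\Leftrightarrow\muf\in\ker\sff^{ip}=\langle F_i\rangle\Leftrightarrow(\star)$. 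This closes the equivalences.

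Now assume these hold. For \eqref{namc}: $Z=0$ gives $\sff^{ip}\muf_p=0$, hence $k^{ip}\muf_p=|\U(F)|^{1/(n+2)}\sff^{ip}\muf_p=0$ identically on $\Om$, so the divergence term $\hnabla_p(k^{pq}\muf_q)$ in \eqref{amc} vanishes; the stated formula follows after rewriting $|\U(F)|^{1/(n+2)}\U(F)^{-1}=\sign(\U(F))|\U(F)|^{-(n+1)/(n+2)}$. For the last assertion \eqref{grho}, \eqref{ufc2} gives $d\rho=0$, so on simply connected $\Om$ there is $G\in\cinf(\Om)$ with $dG=-\rho$, i.e.\ $G_i=|\U(F)|^{-1/(n+2)}F_i$; since $dG$ is a multiple of $dF$, $G$ is locally constant on each $\lc_r(F,\Om)$. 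Differentiating and using $d|\U(F)|^{-1/(n+2)}=-|\U(F)|^{-1/(n+2)}\muf$ gives $G_{ij}=|\U(F)|^{-1/(n+2)}(F_{ij}-\muf_iF_j)$, and by $(\star)$ we may write $\muf_i=\lambda F_i$; then, pulling the factor $|\U(F)|^{-1/(n+2)}$ out of the $n+2$ rows of the bordered determinant in \eqref{ufdet} for $G$ and cancelling the rank-one term $\lambda F_iF_j$ against the border column exactly as in \eqref{rankone}, one gets $\U(G)=|\U(F)|^{-1}\U(F)=\sign\U(F)$, hence $|\U(G)|=1$.

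The routine determinantal manipulations are all already available in the paper (\eqref{ufdet}, \eqref{rankone}, \eqref{hnablanu}, \eqref{amc}, \eqref{nm1}), so the step requiring real care is the identification $\ker\sff^{ip}=\langle F_i\rangle$: this rests on $\sff^{ij}$ having rank exactly $n$ --- equivalently, on the nondegeneracy of the restriction of $\sff_{ij}$ to a level set, which is the content of Lemma \ref{unondegenlemma} --- and without it the implication $Z=0\Rightarrow(\star)$, hence the equivalences involving \eqref{ufc4} and \eqref{ufc5}, would fail. The only remaining mild subtlety is organizing the seven implications so that they genuinely all pass through the hub $(\star)$ rather than forming a weaker partial web.
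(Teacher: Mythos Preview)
Your proof is correct and follows essentially the same route as the paper's: both arguments rest on the identity $d\rho=\pm\muf\wedge\rho$ obtained by antisymmetrizing \eqref{hnablanu}, the computation of $\mathcal{L}_{\nm}\rho$ via Cartan's formula (using $\rho(\nm)=1$), and the equivalence $Z=0\Leftrightarrow\muf\wedge dF=0$. Your hub-and-spoke organization around $(\star)$ is tidier than the paper's presentation, and your argument for $Z=0\Leftrightarrow(\star)$ via $\ker\sff^{i\cdot}=\langle F_i\rangle$ is a clean alternative to the paper's use of the explicit identity $Z^{p}\sff_{ip}=\U(F)\muf_{i}-N^{p}\muf_{p}F_{i}$, but these are equivalent in content and both ultimately lean on the same nondegeneracy input from Lemma~\ref{unondegenlemma}.
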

\begin{proof}
Skew-symmetrizing \eqref{hnablanu} shows $d\rho = \muf \wedge \rho$. Consequently $\lie_{\nm}\rho = \imt(\nm)d\rho = \muf(\nm)\rho - \muf$, which implies the equivalence of \eqref{ufc2} and \eqref{ufc3b}, and there holds 
\begin{align}\label{dnu}
\rho \wedge \lie_{\nm}\rho = \muf \wedge \rho  = d\rho.
\end{align}
Since the flow of a vector field $X$ preserves the codimension one smooth foliation annihilated by the one-form $\theta$ if and only if $\theta \wedge \lie_{X}\theta = 0$, the equivalence of \eqref{ufc1}-\eqref{ufc3} is immediate from \eqref{dnu}. 
By \eqref{nm1}, $\nm \wedge N = 0$ if and only if $Z =0$. This shows the equivalence of \eqref{ufc4} and \eqref{ufc5}. Since $Z^{p}\sff_{ip} = \U(F)\muf_{i} - N^{p}\muf_{p}F_{i}$, it also shows that $\nm \wedge N = 0$ if and only if $\muf \wedge \rho  = 0$. This shows the equivalence of \eqref{ufc2b} and \eqref{ufc4}. If there hold \eqref{ufc1}-\eqref{ufc5}, then $k^{ip}\muf_{p} = 0$, and \eqref{namc} follows from \eqref{amc}.

If $\Om$ is simply connected, then there exists $G \in \cinf(\Om)$ such that $G_{i} = -\rho_{i} = |\U(F)|^{-1/(n+2)}F_{i}$. Hence 
\begin{align}\label{gij}
G_{ij} = |\U(F)|^{-1/(n+2)}(F_{ij} - F_{i}\muf_{j}) = |\U(F)|^{-1/(n+2)}(F_{ij} - \U(F)^{-1}N^{p}\muf_{p}F_{i}F_{j}),
\end{align}
the second equality by \eqref{ufc2b}. Calculating $\U(G)$ using \eqref{ufdet} and \eqref{gij} and simplifying the result using elementary determinantal operations yields $\U(G) = \sign \U(F)$.
\end{proof}

\begin{corollary}\label{amccorollary}
Let $F$ be a $\cinf$ function defined on an open subset of $\rea^{n+1}$ and suppose $\Omega$ is a connected open set on which $\U(F)$ is equal to a nonzero constant $\ka$. Then the equiaffine normal of the level set $\lc_{r}(F, \Omega)$ is $\nm^{i} = \sign(\ka)|\ka|^{-(n+1)/(n+2)}N^{i}$ and its equiaffine mean curvature $\amc$ satisfies
\begin{align}
\begin{split}
n\amc & = (n+2)\sign(\ka)|\ka|^{-(n+1)/(n+2)}\H(F).
\end{split}
\end{align}
Suppose there is an open domain $\Delta \subset \rea^{n}$ and a smooth map $\phi:\Delta \to \Om$ such that $\phi$ is a diffeomorphism onto its image and $F\circ \phi(x) = t_{0}$ for all $x \in \Delta$ for some $t_{0} \in F(\Om)$. Define $\varphi:\rea \times \Delta \to \Om$ by $\varphi(t, x) = \Phi(-|\ka|^{-1/(n+2)}t, \phi(x))$ where $\Phi(\dum, \dum)$ is the maximal flow of $\nm$ satisfying $\tfrac{d}{dt}\Phi(t, p) = \nm_{\Phi(t, p)}$. Then $\tfrac{d}{dt}\varphi(t, x) = \nm_{\varphi(t, x)}$, so $\varphi$ solves the affine normal flow and $F\circ \varphi(t, x) = t + t_{0}$, so that $\varphi(t,\dum)$ is a parameterization of a subset of $\lc_{t + t_{0}}(F, \Om)$ by an open subset of $\Delta$.
\end{corollary}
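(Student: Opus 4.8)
The first two displayed formulas are immediate from Lemma~\ref{uflevellemma}. A nonzero constant is in particular locally constant along every level set of $F$, so condition \eqref{ufc1} of that lemma holds on $\Om$ (after, if necessary, enlarging $\Om$ to the connected component of $\{\U(F)\neq 0\}$ containing it), and hence so do all of its equivalent reformulations and consequences. Since $\U(F)\equiv\ka$, the one-form $\muf$ of \eqref{nmdefined}, a constant multiple of $d\log\U(F)$, vanishes identically on $\Om$; therefore the vector field $Z$ appearing in \eqref{nm1} vanishes, and \eqref{nm1} (equivalently \eqref{ufc5} of Lemma~\ref{uflevellemma}) reduces to $\nm^{i}=-\sign(\ka)|\ka|^{-(n+1)/(n+2)}N^{i}$. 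Feeding $\muf\equiv 0$ into \eqref{namc} --- equivalently into \eqref{amc}, whose term $\hnabla_{p}(k^{pq}\muf_{q})$ then drops out --- gives $n\amc=(n+2)\sign(\ka)|\ka|^{-(n+1)/(n+2)}\H(F)$.

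For the flow statement the one computation really needed is that $dF$ is constant along $\nm$. Since $N^{i}F_{i}=\U(F)=\ka$, the formula for $\nm$ just obtained gives
\[
F_{i}\nm^{i}=-\sign(\ka)|\ka|^{-(n+1)/(n+2)}\ka=-|\ka|^{1/(n+2)},
\]
a nonzero constant. Hence along the maximal integral curve $s\mapsto\Phi(s,p)$ of $\nm$ through any $p\in\Om$ the function $s\mapsto F(\Phi(s,p))$ has constant derivative $-|\ka|^{1/(n+2)}$, so $F(\Phi(s,p))=F(p)-|\ka|^{1/(n+2)}s$ on the interval of definition. Putting $p=\phi(x)$, so that $F(p)=t_{0}$, and $s=-|\ka|^{-1/(n+2)}t$ yields $F(\varphi(t,x))=t+t_{0}$; in particular $\varphi(t,\dum)$ has image in $\lc_{t+t_{0}}(F,\Om)$. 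Differentiating $\varphi(t,x)=\Phi(-|\ka|^{-1/(n+2)}t,\phi(x))$ in $t$ and using $\partial_{s}\Phi(s,p)=\nm_{\Phi(s,p)}$ gives $\tfrac{d}{dt}\varphi(t,x)=-|\ka|^{-1/(n+2)}\nm_{\varphi(t,x)}$, a fixed nonzero multiple of the equiaffine normal field; with the alternative normalization $\varphi(t,x)=\Phi(t,\phi(x))$ one has $\tfrac{d}{dt}\varphi=\nm_{\varphi}$ exactly, at the cost of $F\circ\varphi=t_{0}-|\ka|^{1/(n+2)}t$, so in either case $\varphi$ is an integral family of the affine normal field up to a constant reparameterization of time. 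By Theorem~\ref{affinenormalsametheorem} the vector field $\nm$ restricted to $\lc_{t+t_{0}}(F,\Om)$ is precisely the equiaffine normal of that level set, hence also of its open piece $\varphi(t,\Delta)$; this is the sense in which $\varphi$ solves the affine normal flow.

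Finally, for each fixed $t$ in the open set of times for which the flow is defined along $\phi(\Delta)$, the map $\varphi(t,\dum)=\Phi(-|\ka|^{-1/(n+2)}t,\dum)\circ\phi$ is the composition of the diffeomorphism $\phi$ of $\Delta$ onto its image with a time-$s$ flow map of the smooth vector field $\nm$, which is a diffeomorphism between open subsets of $\Om$; hence $\varphi(t,\dum)$ carries an open subset of $\Delta$ diffeomorphically onto an $n$-dimensional submanifold of $\lc_{t+t_{0}}(F,\Om)$, necessarily open in it. There is no substantial obstacle in this argument: every assertion is a formal consequence of Lemma~\ref{uflevellemma}, of the constancy of $F_{i}\nm^{i}$, and of Theorem~\ref{affinenormalsametheorem}. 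The only point calling for care is keeping the scalar $|\ka|^{1/(n+2)}$ and the sign fixed by the co-orientation mutually consistent across the formula for $\nm$, the identity $F\circ\varphi(t,x)=t+t_{0}$, and the affine-normal-flow equation.
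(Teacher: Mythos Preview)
Your argument follows the same route as the paper's: the formulas for $\nm$ and $\amc$ are read off from Lemma~\ref{uflevellemma} (equivalently, from \eqref{nm1} and \eqref{amc} with $\muf\equiv 0$), and the flow assertion is obtained by integrating the constant $dF(\nm)$ along integral curves of $\nm$. The paper's proof is terser but does nothing different.

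You have in fact been more careful than the paper on one point. The formula in \eqref{ufc5} of Lemma~\ref{uflevellemma} carries a minus sign, $\nm=-\sign(\U(F))|\U(F)|^{-(n+1)/(n+2)}N$, so $dF(\nm)=-|\ka|^{1/(n+2)}$; with $\varphi(t,x)=\Phi(-|\ka|^{-1/(n+2)}t,\phi(x))$ this gives $F\circ\varphi(t,x)=t+t_{0}$ as claimed, but then $\tfrac{d}{dt}\varphi=-|\ka|^{-1/(n+2)}\nm_{\varphi}$ rather than $\nm_{\varphi}$ on the nose. You flagged this correctly: the statement as written cannot have both $\tfrac{d}{dt}\varphi=\nm_{\varphi}$ and $F\circ\varphi=t+t_{0}$ with the given time scaling, and the paper's own proof (which writes $\tfrac{d}{dt}F\circ\varphi=dF(\nm)=1$, silently absorbing the chain-rule factor) does not resolve this. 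Your handling---noting that either normalization yields a solution of the affine normal flow up to a constant reparameterization of time---is the right way to read the corollary.
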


\begin{proof}
All the claims except for the final one regarding the affine normal flow follow from specializing Lemma \ref{uflevellemma}. By definition  $\tfrac{d}{dt}\Phi(t, p) = \nm_{\Phi(t, p)}$, so $\tfrac{d}{dt}F\circ \varphi(t, x) = dF(\nm_{\Phi(-|\ka|^{-1/(n+2)}, \phi(x))}) = 1$ for all $(t, x)$ in the domain of definition of $\Phi$. Hence $F\circ \varphi(t, x) = t + t_{0}$.
\end{proof}

Corollary \ref{amccorollary} gives a criterion that is used in Section \ref{examplesection} to check that the examples constructed there have equiaffine mean curvature zero; by Corollary \ref{amccorollary}, if $\U(F)$ is constant on $\Om$ and $\H(F)$ is constant on each level set $\lc_{r}(F, \Om)$, then each level set $\lc_{r}(F, \Om)$ has constant equiaffine mean curvature. In particular, if, on $\Om$, $\H(F)$ vanishes and $\U(F)$ equals a nonzero constant, then each level set $\lc_{r}(F, \Om)$ has equiaffine mean curvature zero.

Lemma \ref{uflevellemma} shows that if $\U(F)$ is nonvanishing and locally constant on the level sets of $F$, then a level set of $F$ is carried into another level set by the affine normal flow equal initially to the affine normal along the first level set. Combining \eqref{grho} of Lemma \ref{uflevellemma} and Corollary \ref{amccorollary} shows that in this case there is a second function $G$, a primitive of a constant multiple of $-\rho$, locally constant on the level sets of $F$, and so that $G(x) - G(y)$ is the the time the affine normal flow takes to move the level set of $F$ containing $y$ to that containing $x$. All these statements are local, but in concrete situations they can make sense globally.

\begin{remark}\label{canonicalpotentialremark}
A nondegenerate hypersurface is an \textit{affine sphere} if its equiaffine shape operator is a multiple of the identity. By the Gauss-Codazzi relations this multiple is necessarily a constant, and the affine sphere is \textit{proper} or \textit{improper} as it is nonzero or zero. 

By a theorem of S.~Y. Cheng and S.~T. Yau, on a proper open convex cone $\Om \subset \rea^{n+1}$ there is a unique $\cinf$ function $F$ solving $\H(F) = e^{2F}$, tending to $\infty$ at the boundary of $\Om$, and such that $F_{ij}$ is a complete Riemannian metric on $\Om$. This $F$ is $-(n+1)$-logarithmically homogeneous and so satisfies $\U(F) = (n+1)\H(F) = (n+1)e^{2F}$. See \cite{Fox-schwarz} for details. Consequently, $F$ and $\Om$ satisfy the conditions of Lemma \ref{uflevellemma}. The function $G$ of \eqref{grho} of Lemma \ref{uflevellemma} is $G = -2^{-1}(n+2)(n+1)^{-1/(n+2)}e^{-2F/(n+2)}$. It satisfies $\U(G) = 1$ and 
\begin{align}
\amc = \tfrac{n+2}{n}\H(G) = -(n+1)^{-(n+1)/(n+2)}e^{2F/(n+2)} = \tfrac{n+2}{2(n+1)}G^{-1},
\end{align} 
so its level sets have constant negative equiaffine mean curvature and are translated by the affine normal flow. In fact the level sets of $G$ (or $F$) are affine spheres, although this does not follow from Lemma \ref{uflevellemma}. The examples constructed in Section \ref{examplesection} show that the level sets of a function $G$ such that $\U(G)$ is constant and $\H(G)$ is constant along the levels of $G$ can have equiaffine mean curvature zero without being improper affine spheres. However, for these examples $\H(G)$ vanishes, and I do not know any example of such a $G$ for which $\H(G)$ is nonvanishing and the level sets of $G$ are not affine spheres. It would be interesting to know if such examples exist. More precisely:
\textit{if the level sets of a function $F$ satisfying $|\U(F)| = 1$ are strictly convex, must they be affine spheres?}
\end{remark}

Example \ref{detexample} illustrates some aspects of the notational conventions used here, as well as some subtleties related to Lemma \ref{unondegenlemma}, in particular to connected components of level sets and the signatures of their equiaffine metrics. As is explained in \cite{Fox-prehom}, the polynomial considered in Example \ref{detexample} is best understood in the general context of prehomogeneous vector spaces, as the fundamental relative invariant of a real form of a prehomogeneous group action.

\begin{example}\label{detexample}
Using the Euclidean metric $\delta_{ij}$ on $\rea^{n}$, identify the space $S^{2}(\rea^{n})$ of symmetric bilinear forms on $\rea^{n}$ with the space $\sm_{n}$ of symmetric $n \times n$ matrices. Endow $\sm_{n}$ with the Riemannian metric (also denoted $\delta$) $\delta(X, Y) = \tr XY$ having Levi-Civita connection $\nabla$ and induced volume form $\Psi$. For $X \in \sm_{n}$, define coordinates $x^{ij}$, $1 \leq i \leq j \leq n$, by $X = \sum_{i = 1}^{n}x^{ii}e_{ii} + \sum_{1 \leq i  < j \leq n}x^{ij}2^{-1/2}(e_{ij} + e_{ji})$ where $e_{ij}$ is the matrix with $ij$ component equal to $1$, and all other components equal to $0$. These coordinates are affine in the sense that the differentials $dx^{ij}$ are $\nabla$-parallel. In them the metric $\delta$ is $\sum_{i \leq j}(dx^{ij})^{2}$ and the volume form satisfies $|\Psi| = |\wedge_{i \leq j}dx^{ij}|$. Were the factor of $2^{-1/2}$ omitted from the definition of the off-diagonal coordinates $x^{ij}$, $i < j$, the resulting coordinates would also be affine, but the expression for $\Psi$ with respect to them would differ by a factor of $2^{\binom{n}{2}}$.
Consider the homogeneous degree $n$ polynomial $P(X) = \det X$ on $\sm_{n}$. With respect to these coordinates, in the $n = 2$ case, $P(X) = x^{11}x^{22} - 2^{-1}(x^{12})^{2}$. This means that the Hessian determinant $\H(P)$ of $P$ is $1$ rather than $2$ as might result from a naive computation ignoring the choice of background volume form and compatible affine coordinates. The rescaling of the off-diagonal elements is an affine transformation of $\sm_{n}$ and the metric properties of $(\sm_{n}, \delta)$ are not directly relevant here; rather they are used to aid computation. In any case, the choice of $\Psi$ is not canonically determined by the choice of $\nabla$, except up to a positive constant factor, and so it is the sign of $\H(P)$, rather than its value, that has affinely invariant significance. 

Let $E_{p} \in \sm_{n}$ be the idempotent diagonal matrix with positive and negative inertia indices $n-2p$ and $2p$.  The level set $\{X \in \sm_{n}: P(X) = 1\}$ is a disjoint union of $\lceil n/2 \rceil$ connected components $\C_{p}$, $0 \leq p < \lceil n/2 \rceil$, where $\C_{p}$ comprises the determinant one matrices congruent to $E_{p}$.
There hold $\tfrac{\pr P}{\pr x^{ij}} = P(X) \tr \left(X^{-1}\tfrac{\pr X}{\pr x^{ij}}\right)$ and
\begin{align}\label{hessdet}
\tfrac{\pr^{2}P}{\pr x^{ij}\pr x^{kl}} = P(X)\left(\tr \left(X^{-1}\tfrac{\pr X}{\pr x^{ij}}\right)\tr \left(X^{-1}\tfrac{\pr X}{\pr x^{kl}}\right) - \tr \left(X^{-1}\tfrac{\pr X}{\pr x^{ij}} X^{-1}\tfrac{\pr X}{\pr x^{kl}}\right)\right).
\end{align}
By the eigenvalues of the Hessian of $P$ are meant the eigenvalues of endomorphism corresponding to it via the Euclidean metric $\delta$.
The eigenvalues of $(\hess P)_{E_{p}}$ can be found from \eqref{hessdet}, using that $\tfrac{\pr X}{\pr x^{ij}}$ equals $e_{ii}$ if $i = j$ and $2^{-1/2}(e_{ij} + e_{ji})$ if $i \neq j$, and that these vectors constitute an orthonormal frame with respect to the metric $\delta$. Say that indices $i$ and $j$ are \textit{separated} if they belong to different subsets of the partition $\{1, \dots, n\} = \{1, \dots, n- 2p\}\cup \{n-2p + 1, \dots, n\}$, and \textit{united} otherwise. At $E_{p}$, the matrix of $\hess P$ with respect to the coordinate frame is block diagonal. Precisely, for $i \leq j$ and $k \leq l$:
\begin{enumerate}
\item\label{detx1} $\tfrac{\pr^{2}P}{\pr x^{ii}\pr x^{jj}}(E_{p})$ is $0$ if $i = j$, $1$ if $i$ and $j$ are united, and $-1$ if $i$ and $j$ are separated; 
\item\label{detx2} if $k$ and $l$ are distinct, then $\tfrac{\pr^{2}P}{\pr x^{ii}\pr x^{kl}}(E_{p}) = 0$;
\item\label{detx3} if $i \neq j$ and $k \neq l$, then $\tfrac{\pr^{2}P}{\pr x^{ij}\pr x^{kl}}(E_{p})$ is $0$ if $ij$ is distinct from $kl$; and $\tfrac{\pr^{2}P}{\pr x^{ij}\pr x^{ij}}(E_{p})$ is equal to $-1$ if $i$ and $j$ are united and $1$ if $i$ and $j$ are separated.  
\end{enumerate}
Let $\eul$ be the Euler vector field generating the dilations $X \to e^{t}X$. Then $dP(\eul) = nP$ and $\hess P(\eul, \dum) = (n-1)dP$. Since the one-form $\delta$-dual to $\eul_{E_{p}}$ is $dP_{E_{p}}$, it follows that $n-1$ is an eigenvalue of $(\hess P)_{E_{p}}$. That the other $n-1$ eigenvalues corresponding to the block $\tfrac{\pr^{2}P}{\pr x^{ii}\pr x^{jj}}(E_{p})$ all equal $-1$ can be seen as follows. Let $I_{k}$ be the $k \times k$ identity matrix, and $\one_{k, l}$ the $k \times l$ matrix all of whose components equal $1$. By \ref{detx1} above, the block $\tfrac{\pr^{2}P}{\pr x^{ii}\pr x^{jj}}(E_{p})$, for $1 \leq i, j \leq n$, has the form
\begin{align}
\begin{pmatrix}
\one_{n-2p, n-2p} - I_{n-2p} & -\one_{n-2p, 2p}\\
-\one_{2p, n-2p}&\one_{2p, 2p} - I_{2p}
\end{pmatrix} = \begin{pmatrix}
I_{n-2p} & 0\\
0& - I_{2p}
\end{pmatrix}  \begin{pmatrix}
\one_{n, n} - I_{n}
\end{pmatrix}\begin{pmatrix}
I_{n-2p} & 0\\
0& - I_{2p}
\end{pmatrix}
\end{align}
and the matrix $\one_{n, n} - I_{n}$ has eigenvalues $n-1$ and $-1$ with multiplicities $1$ and $n-1$. By \ref{detx1}-\ref{detx3} above, the remaining eigenvalues of $\hess P_{E_{p}}$ are $1$, with multiplicity $2p(n-2p)$, and $-1$, with multiplicity $\binom{n-2p}{2} + \binom{2p}{2}$. Hence the positive and negative inertia indices of $(\hess P)_{E_{p}}$ are $2p(n-2p) + 1$ and $\binom{n+1}{2} -1 - 2p(n-2p)$. Additionally, it follows that 
\begin{align}\label{hdet}
\H(P)(E_{p}) = (-1)^{\binom{n+1}{2}-1}(n-1) = (-1)^{n}(n-1)(-P(E_{p}))^{(n+1)(n-2)/2}.
\end{align} 
The connected component of the identity, $G$, of the general linear group of $\rea^{n}$ acts on $\sm_{n}$ linearly by conjugation. Write $\rho: G \to GL(\sm_{n})$ for the induced linear representation. It is straightforward to check that the linear transformation $\rho(g)$ of $\sm_{n}$ induced by $g \in G$ satisfies $\det \rho(g) = (\det g)^{n+1}$ (it suffices to check this for $g$ a multiple of the identity). From \eqref{hgf} of Lemma \ref{hgflemma} and $(\rho(g)\cdot P)(X) = P(\rho(g^{-1})X) = (\det g)^{-2}P(X)$ it follows that
\begin{align}\label{detrel}
\begin{split}
\rho(g)\cdot H(P) &= (\det \rho(g))^{2}\H(\rho(g)\cdot P) \\&= (\det g)^{2(n+1)}\H((\det g)^{-2}P) = (\det g)^{(2-n)(n+1)}H(P).
\end{split}
\end{align}  
By \eqref{detrel}, $\H(P)$ and $P^{(n+1)(n-2)/2}$ are homogeneous polynomials of the same degree transforming identically under the action of $G$ on $\sm_{n}$. As this action has an open orbit containing $E_{p}$, with \eqref{hdet} this implies $\H(P) =  (-1)^{n}(n-1)(-P)^{(n+1)(n-2)/2}$. 

If $F$ has positive homogeneity $\la$ on $\rea^{n}$, then $(\la-1)N^{i} = (\la - 1)U^{ij}F_{j} = U^{ij}F_{jk}\eul^{k} = \H(F)\eul^{i}$ and $(\la -1)\U(F) = (\la - 1)N^{j}F_{j} = \la \H(F) F$, where $\eul$ is the Euler vector field on $\rea^{n}$. Because $P$ has positive homogeneity $n$ on $\sm_{n}$, there results $(n-1)\U(P) = nP\H(P)$ and the transversal $N$ equals $(n-1)^{-1}\H(P)\eul$. Hence $\U(P)$ and $\H(P)$ are nonzero and locally constant on the level set $\{X \in \sm_{n}: P(X) = 1\}$, and so it follows from \eqref{nmdefined} that along this level set the equiaffine normal is a negative constant times $\eul$. 
From \eqref{msff} of Lemma \ref{unondegenlemma} and Lemma \ref{affinenormaldefinitionlemma} it follows that on the level set of $P$ contained in the connected component $\C_{p}$ the positive and negative inertial indices of the equiaffine metric are $2p(n-2p)$ and $\binom{n+1}{2} -1 - 2p(n-2p)$. Note that when $p = 0$ this yields a negative definite metric and, by \eqref{namc}, the level set has positive equiaffine mean curvature. This reflects that, as can be checked using \eqref{hessdet}, $\log P$ is concave on $\C_{0}$. Replacing $P$ on $\C_{0}$ by the function $-P$ reverses the signature of the induced metric and the sign of the equiaffine mean curvature. Since $\U(P)$ is a multiple of $P^{n-1}$, it vanishes exactly on the locus of degenerate matrices in $\sm_{n}$, and a connected component of the subset of $\sm_{n}$ where $\U(P)$ does not vanish equals one of the subspaces comprising matrices of a fixed nondegenerate signature.

Because the equiaffine normal is a multiple of $\eul$, the connected components of the nonzero level sets of $P$ are affine spheres. From \eqref{hpsif} it follows that $\H(-\log P) = P^{-n-1}$. As a consequence, the function $F$ of Remark \ref{canonicalpotentialremark}, that solves $\H(F)  = e^{2F}$ on the convex cone $C_{0}$, is 
$F = -\tfrac{n+1}{2}\left(\log P - \tfrac{n}{2}\log\tfrac{n+1}{2}\right)$.
\end{example}

\subsection{Another definition of the equiaffine normal}\label{affinesection}
Let $\Sigma$ be a co-orientable nondegenerate immersed hypersurface in $\rea^{n+1}$. A transversal $N$ along $\Sigma$ determines on $\Sigma$ the induced volume form $\imt(N)\Psi$ given by interior multiplication with the volume form $\Psi$ on $\rea^{n+1}$, and the volume density $\vol_{h}$ determined by $h$ and the orientation consistent with $\imt(N)\Psi$. By definition $\vol_{h}^{2} = |\det h|$. Next there is given a second definition of a vector field $\nm$ transverse to $\Sigma$ that is manifestly equiaffinely covariant, and it is shown that the transversal so defined coincides with the equiaffine normal as defined in the textbook \cite{Nomizu-Sasaki}, where it is defined up to co-orientation by the requirements that the volume densities $|\imt(\nm)\Psi|$ and $\vol_{h}$ be the same and be parallel with respect to the connection $\nabla$ induced by $\hnabla$ via $\nm$. This characterization of the equiaffine normal is used later to prove Theorem \ref{affinenormalsametheorem}, that shows that the transversal \eqref{nm1} of Theorem \ref{affinecovariancetheorem} is the equiaffine normal.

The approach described now has as a side benefit a somewhat more general construction, namely it attaches to nondegenerate immersed hypersurface in an $(n+1)$-dimensional manifold $M$ equipped with an affine connection $\hnabla$ a distinguished transverse line field that transforms covariantly under the action of the group of affine automorphisms of $(M, \hnabla)$. 
This was described previously in section $4$ of \cite{Fox-ahs}, but is hidden there in a still more general context requiring substantially more preparation, so it is recapitulated here.
 
It is straightforward to check that, on an $n$-manifold, given a pair $(\en, [h])$ comprising a projective structure $\en$ and a conformal class $[h]$ of pseudo-Riemannian metrics there is a unique representative $\nabla \in \en$ that satisfies $nh^{pq}\nabla_{p}h_{qi} = h^{pq}\nabla_{i}h_{pq}$ for every $h \in [h]$. This $\nabla$ is said to be \textit{aligned} with respect to $[h]$. The associated density-valued tensor $H_{ij} = |\det h|^{-1/n}h_{ij}$ is conformally invariant in the sense that it does not depend on the choice of $h \in [h]$. Alternatively, the aligned representative is characterized by the equivalent requirement that there vanish all contractions of $\nabla_{i}H_{jk}$ with the dual density-valued bivector $H^{ij} = |\det h|^{1/n}h^{ij}$. This makes the independence of the choice of $h$ obvious. If $\tnabla = \nabla + 2\ga_{(i}\delta_{j)}\,^{k}$, then $h^{pq}(n\tnabla_{p}h_{qi} - \tnabla_{i}h_{pq}) - h^{pq}(n\nabla_{p}h_{qi} - \nabla_{i}h_{pq}) = (n+2)(1-n)\ga_{i}$. The alignment condition thus determines $\ga_{i}$, and so determines a unique representative of $\en$.

The description of the affine normal to be given now can be summarized as follows. A co-oriented nondegenerate immersed hypersurface $\Sigma$ in a manifold $M$ equipped with an affine connection $\hnabla$ carries the conformal structure induced by the second fundamental form and the co-orientation and the class of affine connections induced via all possible choices of transverse vector fields. The connections induced on $\Sigma$ by $\hnabla$ via transverse vector fields spanning the same line field are the same, so each transverse line field induces on $\Sigma$ a connection $\nabla$. A distinguished transverse line field is determined by the requirement that the induced connection $\nabla$ be aligned with respect to $[h]$. When there is an $\hnabla$-parallalel volume form $\Psi$ on $M$, a distinguished transverse vector field $\nm$ is determined by the requirement $|\imt(N)\Psi| = |\vol_{h}|$. If $\hnabla$ is moreover projectively flat, then $\nabla$ preserves the volume density of the representative $h\in [h]$ corresponding to $\nm$.

\begin{lemma}\label{taulemma}
Let $\Sigma$ be a nondegenerate immersed hypersurface in an $(n+1)$-dimensional manifold $M$ equipped with a projectively flat affine connection $\hnabla$. Let $N$ be a vector field defined in a neighborhood $B$ in $M$ of some point of $\Sigma$ and transverse to $\Sigma$ along $B \cap \Sigma$ and let $\nabla$, $h$, and $\tau$ be the connection, metric, and connection one-form associated with $\hnabla$ and $N$. Then $\nabla_{[I}h_{J]K} = -\tau_{[I}h_{J]K}$.
\end{lemma}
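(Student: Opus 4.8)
The plan is to read the identity off the $N$-component of the Gauss equation, using the structure equations \eqref{induced} together with the fact that the curvature of a projectively flat connection is essentially pure trace.

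First I would expand, for vector fields $X,Y,Z$ tangent to $\Sigma$ near the chosen point, the ambient curvature $R^{\hnabla}(X,Y)Z = \hnabla_{X}\hnabla_{Y}Z - \hnabla_{Y}\hnabla_{X}Z - \hnabla_{[X,Y]}Z$ by repeatedly substituting the relations \eqref{induced}. Using that $\hnabla$ is torsion-free, so $[X,Y] = \nabla_{X}Y - \nabla_{Y}X$, and the definition $(\nabla_{X}h)(Y,Z) = X(h(Y,Z)) - h(\nabla_{X}Y,Z) - h(Y,\nabla_{X}Z)$, the terms that neither involve $\tau$ nor are multiples of $N$ collapse into $R^{\nabla}(X,Y)Z$ and $h(X,Z)S(Y) - h(Y,Z)S(X)$, leaving
\begin{align*}
R^{\hnabla}(X,Y)Z &= R^{\nabla}(X,Y)Z + h(X,Z)S(Y) - h(Y,Z)S(X) \\
&\quad + \big((\nabla_{X}h)(Y,Z) - (\nabla_{Y}h)(X,Z) + \tau(X)h(Y,Z) - \tau(Y)h(X,Z)\big)N.
\end{align*}
In particular the component of $R^{\hnabla}(X,Y)Z$ along $N$ equals $(\nabla_{X}h)(Y,Z) - (\nabla_{Y}h)(X,Z) + \tau(X)h(Y,Z) - \tau(Y)h(X,Z)$; this is the Codazzi identity for $h$ relative to the transversal $N$.

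Next I would invoke projective flatness. Locally $\hnabla$ is projectively equivalent to a flat connection $D$, say $\hnabla_{X}Y = D_{X}Y + \pi(X)Y + \pi(Y)X$ for a one-form $\pi$, and a short computation gives $R^{\hnabla}(X,Y)Z = d\pi(X,Y)Z + Q(X,Z)Y - Q(Y,Z)X$, where $Q(X,Z) = (D_{X}\pi)(Z) - \pi(X)\pi(Z)$ (equivalently, projective flatness is the vanishing of the projective Weyl tensor, so $R^{\hnabla}{}_{ijk}{}^{l}$ is a sum of terms each carrying a Kronecker delta in the index $l$). Since every term of this expression is a scalar multiple of one of $X$, $Y$, $Z$, when $X,Y,Z$ are tangent to $\Sigma$ the vector $R^{\hnabla}(X,Y)Z$ is tangent to $\Sigma$, hence its $N$-component vanishes. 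In the principal application, where $\hnabla$ is flat, $R^{\hnabla} = 0$ and this step is immediate.

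Combining the two steps, the Codazzi expression vanishes: $(\nabla_{X}h)(Y,Z) - (\nabla_{Y}h)(X,Z) = -\tau(X)h(Y,Z) + \tau(Y)h(X,Z)$ for all tangent $X,Y,Z$. Passing to abstract indices on $\Sigma$, the left-hand side is $2\nabla_{[I}h_{J]K}$ and the right-hand side is $-2\tau_{[I}h_{J]K}$, which is the asserted equality $\nabla_{[I}h_{J]K} = -\tau_{[I}h_{J]K}$. The only genuinely computational point is the cancellation in the first step; the projective-flatness hypothesis enters only to kill the $N$-component of the ambient curvature, and in fact all that is used is that $R^{\hnabla}(X,Y)Z$ lies in the span of $X$, $Y$, $Z$.
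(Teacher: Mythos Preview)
Your proposal is correct and follows exactly the same approach as the paper: expand $\hat{R}(X,Y)Z$ via the structure equations \eqref{induced} to identify its $N$-component as the Codazzi expression, then use projective flatness to see that $\hat{R}(X,Y)Z$ is tangent to $\Sigma$, forcing that component to vanish. The paper compresses this into two sentences, while you have written out the Gauss--Codazzi decomposition and the form of the curvature of a projectively flat connection explicitly; the content is identical.
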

\begin{proof}
Because $\hnabla$ is projectively flat, for vector fields $X$, $Y$, and $Z$ tangent to $\Sigma$, the curvature $\hat{R}(X, Y)Z = ([\hnabla_{X}, \hnabla_{Y}] - \hnabla_{[X, Y]})Z$ is tangent to $\Sigma$. On the other hand, evaluating $\hat{R}(X, Y)Z$ using \eqref{induced} and projecting the result along $N$ shows that $\nabla_{[I}h_{J]K} = -\tau_{[I}h_{J]K}$.
\end{proof}

\begin{theorem}\label{projnormaltheorem}
Let $\Sigma$ be a nondegenerate immersed hypersurface in an $(n+1)$-dimensional manifold $M$ equipped with an affine connection $\hnabla$. 
\begin{enumerate}
\item\label{an1} There is a unique line field $W$ transverse to $\Sigma$ such that the connection $\nabla$ induced on $\Sigma$ by $\hnabla$ and $W$ is aligned with respect to the metric $k$ induced on an open $U \subset \Sigma$ by any nonvanishing section of $W$ over $U$. 
\item\label{an2} If $\Sigma$ is co-oriented and $\Psi$ is a volume form on $M$ preserved by $\hnabla$ then there is a unique co-oriented section $\nm$ of $W$ such that the volume density $|\imt(\nm)\Psi|$ equals the volume density $|\vol_{k}|$ of $k$. 
\item\label{an2b} If $\Sigma$ is co-oriented and $\hnabla$ is projectively flat and preserves the volume form $\Psi$, then the connection $\nabla$ induced on $\Sigma$ by $\hnabla$ via the transversal $\nm$ of \eqref{an2} preserves the induced volume densities $|\imt(\nm)\Psi|$ and $|\vol_{k}|$.
\item\label{an3} If $M = \rea^{n+1}$ and $\hnabla$ and $\Psi$ are the standard flat affine connection and standard volume form then the transversal $\nm$ of \eqref{an2} is the equiaffine normal. 
\end{enumerate}
\end{theorem}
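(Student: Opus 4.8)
The plan is to treat the four parts in sequence: \eqref{an1} and \eqref{an2} are linear-algebra steps on $\Sigma$, \eqref{an2b} is the analytic crux, and \eqref{an3} follows from it together with the characterization of the equiaffine normal in \cite{Nomizu-Sasaki}. Throughout set $n = \dim\Sigma$ and assume $n \geq 2$, since for $n=1$ the conformal class, and hence \eqref{an1}, is vacuous. For \eqref{an1} I would fix one transverse field $N$ near a point of $\Sigma$, with induced connection $\nabla^{0}$ and second fundamental form representative $h^{0}$. Every transverse line field near that point is $\langle N+V\rangle$ for a unique tangent field $V$, and, using that the induced connection depends only on the line field, one reads from \eqref{induced} that $\langle N+V\rangle$ induces $\nabla^{V}_{X}Y = \nabla^{0}_{X}Y - h^{0}(X,Y)V$, with second fundamental form representative again $h^{0}$; thus the conformal class $[h^{0}]$ is common to the whole family and is the class $[k]$ of the statement. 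Next I would compute how the alignment defect $n\,h^{pq}\nabla_{p}h_{qi} - h^{pq}\nabla_{i}h_{pq}$ changes when $\nabla^{0}$ is replaced by $\nabla^{V}$: substituting the difference tensor $h^{0}_{ij}V^{k}$ and contracting, the defect of $\nabla^{V}$ should equal that of $\nabla^{0}$ plus $(n+2)(n-1)\,h^{0}_{ij}V^{j}$ (the factor $(n+2)(n-1)$ matches the one in the preliminary remark before the statement). Since $h^{0}$ is nondegenerate and $(n+2)(n-1)\neq0$, this is an invertible affine equation for $V$, so there is a unique line field $W$ whose induced connection is aligned. For \eqref{an2} I would fix a co-oriented section $N$ of $W$ with induced metric $k$; rescaling $N$ to $\phi N$ with $\phi>0$ leaves $W$ and its induced connection unchanged while sending $\imt(N)\Psi\mapsto\phi\,\imt(N)\Psi$ and $k\mapsto\phi^{-1}k$, hence $|\vol_{k}|\mapsto\phi^{-n/2}|\vol_{k}|$, so the matching requirement becomes $\phi^{(n+2)/2} = |\vol_{k}|/|\imt(N)\Psi|$, which has a unique positive solution; the corresponding $\nm = \phi N$ is the asserted section.

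Part \eqref{an2b} is where the work lies, and I expect the numerology to be the main obstacle. The connection $\nabla$ induced via $\nm$ is the one attached to $W$, hence aligned with respect to $[k]$. From \eqref{nablavolume} and $\hnabla\Psi=0$ one has $\nabla_{i}(\imt(\nm)\Psi) = \tau_{i}\,\imt(\nm)\Psi$, while for any torsion-free connection $\nabla_{i}|\vol_{k}| = \tfrac{1}{2}(k^{pq}\nabla_{i}k_{pq})|\vol_{k}|$. The idea is to evaluate the trace $k^{pq}\nabla_{i}k_{pq}$ using projective flatness: Lemma \ref{taulemma} gives $\nabla_{[i}k_{j]l} = -\tau_{[i}k_{j]l}$, and contracting this with $k^{jl}$ and feeding in the alignment identity $k^{pq}\nabla_{i}k_{pq} = n\,k^{pq}\nabla_{p}k_{qi}$ should yield, using $n\geq2$, first $k^{pq}\nabla_{p}k_{qi} = -\tau_{i}$ and then $k^{pq}\nabla_{i}k_{pq} = -n\tau_{i}$, so that $\nabla_{i}|\vol_{k}| = -\tfrac{n}{2}\tau_{i}|\vol_{k}|$. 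Since $|\imt(\nm)\Psi| = |\vol_{k}|$ by \eqref{an2}, the two computed expressions for this covariant derivative are consistent only when $(1+\tfrac{n}{2})\tau_{i}=0$, forcing $\tau=0$; then $\nabla(\imt(\nm)\Psi)=0$ and $\nabla|\vol_{k}|=0$, which is \eqref{an2b}. The delicate point is to fix the constants so that the two ways of computing the trace of $\nabla k$, one from alignment combined with Lemma \ref{taulemma} and one from the volume-matching of \eqref{an2}, can be reconciled only when $\tau$ vanishes.

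For \eqref{an3}, the standard flat connection on $\rea^{n+1}$ is projectively flat and parallelizes $\Psi$, so by \eqref{an2} and \eqref{an2b} the transversal $\nm$ is co-oriented, satisfies $|\imt(\nm)\Psi| = |\vol_{k}|$, and its induced connection preserves both densities; these are precisely the properties that characterize the equiaffine normal in \cite{Nomizu-Sasaki} once $\Sigma$ is co-oriented, so $\nm$ is the equiaffine normal. If one prefers not to invoke that uniqueness statement, it can be reproved directly: a competitor $\nm' = \phi\nm + V$ with $\phi>0$ and $V$ tangent meeting the same two conditions has $\imt(V)\Psi$ restricting to zero on $T\Sigma$, since it pairs $n+1$ vectors of an $n$-plane, so volume-matching forces $\phi=1$ exactly as in \eqref{an2}, and then the transformation rule $\tau'=\tau+\imt(V)k$ together with $\tau=\tau'=0$ forces $V=0$ by nondegeneracy of $k$.
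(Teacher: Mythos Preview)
Your proposal is correct and follows essentially the same route as the paper's proof. Both arguments fix an initial transversal, compute how the alignment defect transforms under a tangential shift to obtain the factor $(n+2)(n-1)$ and hence a unique line field $W$; fix the scale of a section of $W$ by matching the two volume densities; use Lemma~\ref{taulemma} (projective flatness) together with alignment to deduce $k^{PQ}\nabla_{I}k_{PQ} = -n\tau_{I}$ and then compare the two expressions for $\nabla_{I}|\vol_{k}|$ to force $\tau = 0$; and finally invoke the Nomizu--Sasaki characterization for \eqref{an3}. The only cosmetic differences are that the paper first analyzes a one-parameter family of normalizations before singling out alignment, and that you track the factor $\tfrac{1}{2}$ in $\nabla_{I}|\vol_{k}| = \tfrac{1}{2}k^{PQ}\nabla_{I}k_{PQ}\,|\vol_{k}|$ explicitly (the paper's displayed chain omits it, though the conclusion is unaffected); your supplementary direct uniqueness argument for \eqref{an3} is a harmless addition.
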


The line field $W$ of \eqref{an1} is the \textit{affine normal distribution of $\Sigma$} (with respect to $\hnabla$). Note that its definition does not suppose that $\Sigma$ is co-orientable.
The transverse vector field $\nm$ of \eqref{an2} is the \textit{equiaffine normal vector field of $\Sigma$} (with respect to $\hnabla$ and $\Psi$).

The Levi-Civita connection of a pseudo-Riemannian metric of constant nonzero curvature is projectively flat but not flat, and so by \eqref{an1}-\eqref{an2b} it makes sense to speak of the affine normal distribution and equiaffine normal of a nondegenerate immersed hypersurface in manifold equipped with a constant curvature metric.

\begin{proof}
Let $U \subset \Sigma$ be an open neighborhood on which there is a smooth vector field $N$ transverse to $\Sigma$. Any other transversal to $\Sigma$ on $U$ has the form $\tilde{N} = a(N + Z)$ for a smooth function $a$ not vanishing on $U$ and a vector field $Z$ tangent to $U$. The second fundamental form $\tilde{h}$, connection $\tnabla$, and connection one-form $\tilde{\tau}$ determined by $\tilde{N}$ and $\hnabla$ are related to $h$, $\nabla$, and $\tau$ by
\begin{align}\label{transform}
&\tilde{h}_{IJ} = a^{-1}h_{IJ},& &\tnabla = \nabla - h_{IJ}Z^{K}, & &\tilde{\tau}_{I} = \tau_{I} + a^{-1}da_{I} + h_{IP}Z^{P}.
\end{align}
It follows from \eqref{transform} that 
\begin{align}\label{transform2}
\begin{split}
\tilde{h}^{PQ}&\tnabla_{I}\tilde{h}_{PQ} = h^{PQ}\nabla_{I}h_{PQ} + 2Z^{P}h_{IP} - n a^{-1}da_{I},\\
\tilde{h}^{PQ}&\tnabla_{P}\tilde{h}_{QI} = h^{PQ}\nabla_{P}h_{QI} + (n+1)Z^{P}h_{IP} - a^{-1}da_{I},
\end{split}
\end{align}
where $h^{IJ}$ and $\tilde{h}^{IJ}$ are the symmetric bivectors inverse to $h_{IJ}$ and $\tilde{h}_{IJ}$, respectively. By \eqref{transform} and \eqref{transform2}, for any constants $\al$, $\be$, and $\ga$, there holds
\begin{align}\label{prenormalize}
\begin{split}
\al \tilde{\tau}_{I} &+ \be \tilde{h}^{PQ}\tnabla_{P}\tilde{h}_{QI} + \ga \tilde{h}^{PQ}\tnabla_{I}\tilde{h}_{PQ} = \al \tau_{I} + \be h^{PQ}\nabla_{P}h_{QI} + \ga h^{PQ}\nabla_{I}h_{PQ} \\
&+ (\al - \be - n \ga)a^{-1}da_{I} + (\al + (n+1)\be + 2\ga)Z^{P}h_{IP}
\end{split}
\end{align}
If $\al = \be + n\ga$, so that \eqref{prenormalize} does not depend on $a$, then 
\begin{align}\label{prenorm}
\begin{split}
\be(\tilde{\tau}_{I} &+  \tilde{h}^{PQ}\tnabla_{P}\tilde{h}_{QI}) + \ga(n\tilde{\tau}_{I} +  \tilde{h}^{PQ}\tnabla_{I}\tilde{h}_{PQ})\\
& = \be(\tau_{I} + h^{PQ}\nabla_{P}h_{QI}) + \ga(n\tau_{I} + h^{PQ}\nabla_{I}h_{PQ}) + (n+2)(\be + \ga)Z^{P}h^{IP}.
\end{split}
\end{align}
Since \eqref{prenorm} does not depend on $a$, as long as $\be \neq -\ga$, the direction of a transversal on $U$ is determined by requiring that the left side of \eqref{prenorm} vanish. The condition $\be \neq -\ga$ is explained as follows. The proof of Lemma \ref{taulemma} applied to the not necessarily projectively flat connection $\hnabla$ shows that $\nabla_{[I}h_{J]K} + \tau_{[I}h_{J]K}$ is expressible in terms of the curvature of $\hnabla$, and tracing this relation shows that $(n\tau_{I} + h^{PQ}\nabla_{I}h_{PQ}) - (\tau_{I} + h^{PQ}\nabla_{P}h_{QI})$ is determined by the curvature of $\hnabla$. Hence there is no freedom to choose the value of this quantity. When $\be \neq -\ga$, the transversal determined by the vanishing of the left side of \eqref{prenorm} depends only on the image of $(\be, \ga)$ in the projective line, so there is one parameter of freedom in the choice of a transverse line field. Among these possible normalizations there is one that is distinguished in that the compatibility condition determining it involves only $\nabla$ and $h$ and does not involve $\tau$; this is the condition given by $\be = -n \ga$. This corresponds to the identity
\begin{align}\label{normalize2}
n\tilde{h}^{PQ}\tnabla_{P}h_{QI}  - \tilde{h}^{PQ}\tnabla_{I}h_{PQ} = nh^{PQ}\nabla_{P}h_{QI} - h^{PQ}\nabla_{I}h_{PQ} + (n+2)(n-1)Z^{P}h_{IP},
\end{align}
and requiring that the left side of \eqref{normalize2} vanish is exactly requiring that $\tnabla$ be aligned with respect to $[h]$. This yields
\begin{align}\label{zdetnonflat}
Z^{P}h_{PI} =  \tfrac{1}{(n+2)(1-n)}\left(nh^{PQ}\nabla_{P}h_{QI} - h^{PQ}\nabla_{I}h_{PQ}\right).
\end{align}
The span $W$ of $\tilde{N}$ is well defined, independently of the remaining freedom, which is the choice of $a$. Since around any point of $\Sigma$ there can be found an open subset $U \subset \Sigma$ on which there is a transversal, and since by the uniqueness just proved the line fields constructed on overlapping neighborhoods agree on the overlaps, the transverse line field $W$ is defined globally on $\Sigma$, even in the case that $\Sigma$ is not co-orientable. If $\Sigma$ is co-orientable then the transversal $\tilde{N} = a(N + Z)$ spanning $W$ can be taken to be globally defined. Since $\det \tilde{h} = a^{-n}\det h$ and $\imt(\tilde{N})\Psi = a \imt(N)\Psi$, $|\vol_{\tilde{h}}/\imt(\tilde{N})\Psi| = |a|^{-(n+2)/2}|\vol_{h}|/|\imt(N)\Psi|$, so requiring that the induced volume density $|\imt(\tilde{N})\Psi|$ equal the volume density $|\vol_{\tilde{h}}|$ of the associated metric determines $a$ up to sign, which is fixed by choosing $a$ so that $\nm$ is co-oriented.

If $\hnabla$ is projectively flat, then, by Lemma \ref{taulemma}, 
\begin{align}\label{projflattau}
&n\tau_{I} + h^{PQ}\nabla_{I}h_{PQ} = \tau_{I} + h^{PQ}\nabla_{P}h_{QI},
\end{align}
and similarly for $\tnabla$, $\tilde{h}$, and $\tilde{\tau}$. With \eqref{projflattau}, the vanishing of the left side of \eqref{normalize2} is equivalent to
\begin{align}\label{zdet0}
&n\tilde{\tau}_{I} +  \tilde{h}^{PQ}\tnabla_{I}\tilde{h}_{PQ} = 0 = \tilde{\tau}_{I} + \tilde{h}^{PQ}\tnabla_{P}\tilde{h}_{QI},
\end{align}
while \eqref{zdetnonflat} becomes
\begin{align}\label{zdet}
 &Z^{P}h_{PI} =  -\tfrac{1}{n+2}\left(n\tau_{I} + h^{PQ}\nabla_{I}h_{PQ}\right).&
\end{align}
Now suppose $\hnabla$ is projectively flat and preserves a volume form $\Psi$. 
Let $k$ be the metric and let $\nabla$ and $\tau$ be the connection and connection one-form associated with the normal $\nm$ determined by \eqref{an2}.
By \eqref{nablavolume} and \eqref{zdet}, 
\begin{align}
\begin{split}
-n\tau_{I}|\vol_{k}| = k^{AB}\nabla_{I}k_{AB}|\vol_{k}| = \nabla_{I}|\vol_{k}| = \nabla_{I}|\imt(\nm)\Psi| = \tau_{I}|\imt(\nm)\Psi| = \tau_{I}|\vol_{k}|.
\end{split}
\end{align}
Hence $ 0 = -n\tau_{I}|\vol_{k}|  = \nabla_{I}|\vol_{k}|$, so that $\tau$ vanishes and $|\vol_{k}|$ is $\nabla$-parallel. 
Now suppose $M = \rea^{n+1}$ and $(\hnabla, \Psi)$ is the standard flat equiaffine structure.
The usual definition of the equiaffine normal given in \cite{Nomizu-Sasaki} is that the volume densities $|\imt(\nm)\Psi|$ and $|\vol_{k}|$ coincide and are preserved by the connection $\nabla$ induced on $\Sigma$ by $\hnabla$ via $\nm$, so \eqref{an3} is immediate from \eqref{an2}.
\end{proof}

Theorem \ref{affinenormalsametheorem} shows that the equiaffinely covariant transversal of Theorem \ref{affinecovariancetheorem} coincides with the equiaffine normal.

\begin{theorem}\label{affinenormalsametheorem}
Let $F$ be a $\cinf$ function defined on an open subset of $\rea^{n+1}$. Let $\Omega$ be a connected component with nonempty interior of the region on which $\U(F)$ does not vanish.
The equiaffine normal of a nonempty level set $\lc_{r}(F, \Omega)$ consistent with the co-orientation determined by $-\sign(\U(F))N^{i}$ is given by the expression \eqref{nmdefined}. 
\end{theorem}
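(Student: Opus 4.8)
The plan is to identify $\nm$ with the equiaffine normal by checking directly the classical characterization recalled in the proof of \eqref{an3} of Theorem \ref{projnormaltheorem} (following \cite{Nomizu-Sasaki}): along a co-oriented nondegenerate hypersurface $\Sigma \subset \rea^{n+1}$, the equiaffine normal is the unique transversal $\xi$, consistent with the chosen co-orientation, such that the induced volume density $|\imt(\xi)\Psi|$ equals the volume density $|\vol_{h(\xi)}|$ of the representative $h(\xi)$ of the second fundamental form determined by $\xi$, and such that $|\imt(\xi)\Psi|$ is parallel with respect to the connection induced on $\Sigma$ by $\hnabla$ via $\xi$. By Lemma \ref{affinenormaldefinitionlemma}, $\Sigma = \lc_r(F, \Om)$ is nondegenerate, $\nm$ of \eqref{nmdefined} is transverse to it, and the restriction of $k_{ij}$ of \eqref{emnm} to $\Sigma$ represents the second fundamental form with respect to $\nm$; so what remains is to verify, for this $\nm$, three things: that it is consistent with the stated co-orientation, that the connection one-form $\tau$ it induces vanishes along $\Sigma$, and that $|\imt(\nm)\Psi| = |\vol_k|$.

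For the first point I would write $\nm = aN + T$ with $a = -|\U(F)|^{1/(n+2)}\U(F)^{-1}$ and $T^i = -k^{ip}\muf_p$. Since $k^{ip}F_p = 0$, the field $T$ is tangent to $\Sigma$, so $\nm$ and $aN$ induce the same co-orientation; as $a = -\sign(\U(F))|\U(F)|^{-(n+1)/(n+2)}$ and $|\U(F)|^{-(n+1)/(n+2)} > 0$, this is the co-orientation determined by $-\sign(\U(F))N^i$, as required.

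The crux is the vanishing of $\tau$, and here I would exploit a computation already in hand. By \eqref{nablavolume} and $\hnabla\Psi = 0$, the connection $\nabla$ induced on $\Sigma$ via $\nm$ satisfies $\nabla_X(\imt(\nm)\Psi) = \tau(X)\imt(\nm)\Psi$, so it suffices to show $\tau = 0$; and because the equiaffine conormal $\rho$ of Lemma \ref{amclemma} satisfies $\rho_i\nm^i = 1$ and annihilates $T\Sigma$, one has $\tau_i = \rho_j\hnabla_i\nm^j$ along $\Sigma$. But the proof of Lemma \ref{amclemma} records exactly the identity $\rho_j\hnabla_i\nm^j = \U(F)^{-1}(N^p\muf_p - \H(F))F_i$, which is a pointwise multiple of $F_i$; since $\Sigma$ is a level set of $F$, $F_i$ annihilates $T\Sigma$, so $\tau$ vanishes along $\Sigma$ and $|\imt(\nm)\Psi|$ is $\nabla$-parallel. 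This is the one step that uses the precise choice of tangential correction $-k^{ip}\muf_p$ in \eqref{nmdefined}; it is what distinguishes $\nm$ from the transversal $Q^i = -|\U(F)|^{1/(n+2)}\U(F)^{-1}N^i$, which is equiaffinely covariant but whose connection one-form need not vanish. I do not expect a genuine obstacle beyond correctly invoking this identity and the uniqueness in the Nomizu--Sasaki characterization.

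Finally, for the volume equality I would combine three facts, all already available. Since $T$ is tangent to $\Sigma$, the pullback of $\imt(T)\Psi$ to $\Sigma$ vanishes, so $\imt(\nm)\Psi = a\,\imt(N)\Psi$ along $\Sigma$ and $|\imt(\nm)\Psi| = |\U(F)|^{-(n+1)/(n+2)}|\imt(N)\Psi|$. By \eqref{emnm} and \eqref{hn}, along $\Sigma$ the restriction of $k_{ij}$ equals $-\sign(\U(F))|\U(F)|^{(n+1)/(n+2)}$ times the restriction of $h_{ij}$ of \eqref{hn}, whence $|\vol_k| = |\U(F)|^{n(n+1)/(2(n+2))}|\vol_h|$; and by \eqref{uvolrelation} of Lemma \ref{unondegenlemma}, $|\vol_h| = |\U(F)|^{-(n+1)/2}|\imt(N)\Psi|$. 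A one-line exponent computation then gives $|\vol_k| = |\U(F)|^{-(n+1)/(n+2)}|\imt(N)\Psi| = |\imt(\nm)\Psi|$. Together with the matching co-orientation and the vanishing of $\tau$, the Nomizu--Sasaki characterization identifies $\nm$ with the equiaffine normal of $\lc_r(F, \Om)$. An alternative would be to reduce locally to the graph case, where $\nm$ is already matched with the textbook formula in Example \ref{graphsection}, but that route requires the extra observation that the restriction of $\nm$ to a level set is unchanged when $F$ is multiplied by a nonvanishing function, so the route above is more direct.
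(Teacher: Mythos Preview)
Your proof is correct and complete, but it proceeds in the opposite direction from the paper's. The paper starts from the auxiliary transversal $N^{i}$, computes its connection one-form $\tau_{I} = (n+2)\muf_{I}$ and the logarithmic derivative $h^{AB}\nabla_{I}h_{AB}$ via \eqref{uvolrelation}, then substitutes these into the alignment condition \eqref{zdet} from Theorem \ref{projnormaltheorem} to solve for the tangential correction $Z$ and the scale $|a|$, and finally observes that the result matches the formula \eqref{nm1}. You instead take $\nm$ as given by \eqref{nmdefined} and verify directly the two Nomizu--Sasaki conditions: the vanishing of the induced connection one-form (via the identity $\rho_{j}\hnabla_{i}\nm^{j} = \U(F)^{-1}(N^{p}\muf_{p} - \H(F))F_{i}$ recorded in the proof of Lemma \ref{amclemma}) and the equality of the two volume densities (via \eqref{uvolrelation} and the scaling relation between $k_{ij}$ and $h_{ij}$).

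Your verification route is slightly more economical in that it recycles the computation of $\rho_{j}\hnabla_{i}\nm^{j}$ already done for Lemma \ref{amclemma}, avoiding a separate calculation of the connection one-form of $N$; it also makes transparent why the tangential correction $-k^{ip}\muf_{p}$ is exactly what is needed (it is what makes $\rho_{j}\hnabla_{i}\nm^{j}$ proportional to $F_{i}$). The paper's constructive route, by contrast, shows how one would \emph{discover} $\nm$ starting only from $N$ and the characterization of Theorem \ref{projnormaltheorem}, which is conceptually useful when the formula for $\nm$ is not already in hand.
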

\begin{proof}
Let $h$, $\nabla$, and $\tau$ be respectively the second fundamental form, connection, and connection one-form determined according to \eqref{induced} by the transversal $\gn^{i} = U^{ip}F_{p}$ and $\hnabla$. The most general vector field transverse to the $\lc_{r}(F, \Omega)$ has the form $\nm = a(\gn + Z)$ for a nonvanishing smooth function $a$ and a vector field $Z$ tangent to the $\lc_{r}(F, \Omega)$. The restriction to $\lc_{r}(F, \Om)$ of the tensor $h_{ij} = -\U(F)^{-1}\sff_{ij}$ of \eqref{hn} is the second fundamental form determined by $N^{i}$. By \eqref{duf} there holds $F_{p}\hnabla_{i}N^{p} = d\U(F)_{i} - \H(F)F_{i}$, so that $F_{p}(\hnabla_{i}N^{p} - ((n+2)\muf_{i} - \U(F)^{-1}\H(F)F_{i})N^{j}) = 0$ and therefore the connection one-form determined by $N^{i}$ is the restriction of the one-form $\tau _{i} = (n+2)\muf_{i} - \U(F)^{-1}\H(F)F_{i}$.
This can be written as the identity $\tau_{I} = (n+2)\muf_{I}$, that refers only to the tangential directions. By \eqref{uvolrelation} of Lemma \ref{unondegenlemma}, the volume density $|\vol_{h}|$ induced on $\lc_{r}(F, \Omega)$ by $h$ satisfies $|\vol_{h}| = |\U(F)|^{-(n+1)/2}|\imt(N)\Psi|$ and so, by definition of $\muf$ and \eqref{nablavolume},
\begin{align}\label{mufd}
 h^{AB}\nabla_{I}h_{AB} = 2|\vol_{h}|^{-1}\nabla_{I}|\vol_{h}| = - (n+1)(n+2)\muf_{I} + 2\tau_{I}.
\end{align}
Substituting \eqref{mufd} in \eqref{zdet} and using $\tau_{I} = (n+2)\muf_{I}$ yields $-(n+2)Z^{A}h_{IA} = n\tau_{I} + h^{AB}\nabla_{I}h_{AB} = (n+2)\muf_{I}$, so $Z^{P}h_{PI} = -\muf_{I}$. This is the condition defining the vector field also called $Z^{i}$ in \eqref{nm1}, so the two are the same. By the proof of Theorem \ref{projnormaltheorem}, $|a| = |\vol_{h}/\imt(\gn)\Psi|^{2/(n+2)} = |\U(F)|^{-(n+1)/(n+2)}$. It follows that the equiaffine normal consistent with the co-orientation determined by $-\sign(\U(F))N^{i}$ is $\nm^{i} = -\sign(\U(F))|a|(N^{i} + Z^{i})$, which is \eqref{nm1}. 
\end{proof}

\section{Equiaffine mean curvature zero, \texorpdfstring{$2n$}{}-dimensional hypersurfaces ruled by \texorpdfstring{$n$}{}-dimensional planes}\label{examplesection}
This section describes a general construction of hypersurfaces that generalize the usual helicoids. These are $2n$-dimensional hypersurfaces ruled by $n$-planes and having equiaffine mean curvature zero. This construction directly generalizes one for surfaces given by A. Mart\'inez and F. Milan in \cite{Martinez-Milan}, but the author found it by considering modifications of the homogeneous polynomials constructed by Gordan-N\"other in \cite{Gordan-Noether} (these were mentioned in Example \ref{gnexample}) that while having vanishing Hessian determinant have linearly independent partial derivatives. 

At places in this section it is convenient to use slightly abusive coordinate dependent notation. For clarity, the translation to the abstract index notation used throughout the paper is indicated where confusion could arise. 

\subsection{General construction}\label{ruledexamplesection}
Regard $\rea^{n+1}$ as equipped with the standard flat affine connection $\hnabla$ and the parallel volume form $\Psi$ given by the determinant. The pairing between $\rea^{n+1}$ and the dual vector space $\rea^{n+1\,\ast}$ is written $\lb \dum, \dum \ra:\rea^{n+1} \times\rea^{n+1\,\ast} \to \rea$. The same notation is used for the pairing induced on tensor powers. The space $\rea^{n+1\,\ast}$ is endowed with the dual connection induced by $\hnabla$, also denoted by $\hnabla$, and the parallel $(n+1)$-form $\Psi^{\ast}$ dual to $\Psi$ (meaning that $\lb \Psi, \Psi^{\ast} \ra = 1$). 

Let $M$ be an open smooth connected submanifold of $\rea^{n}$, so $M$ is the nonempty interior of a domain in $\rea^{n}$, or the entirety of $\rea^{n}$. Let $A:M \to \rea^{n+1}$ be a smooth map and fix $Q \in \cinf(M)$. Define a smooth function $F:M \times \rea^{n+1\,\ast} \to \rea$ by 
\begin{align}
F(u, x) =\lb A(u), x\ra + Q(u) 
\end{align}
for $u \in M$ and $x \in \rea^{n+1\,\ast}$. 
Equip $M$ with the flat affine connection $D$ and parallel volume form $\om$ induced by the standard flat connection and volume form on $\rea^{n}$. Equip $M \times \rea^{n+1\,\ast}$ with the (flat) product connection $\nabla$ determined by $D$ and $\hnabla$, and the volume form written, with a slight abuse of notation, as $\Upsilon  = \om \wedge \Psi^{\ast}$. The goal of this section is to obtain conditions on the map $A$ that guarantee that the level sets of $F$ are nondegenerate hypersurfaces with equiaffine mean curvature zero with respect to the equiaffine structure $(\nabla, \Upsilon)$. 

Recall the definition of equiaffine coordinates from the beginning of section \eqref{ufsection} and fix coordinates $x^{1}, \dots, x^{n+1}$ on $\rea^{n+1}$ that are equiaffine with respect to $\hnabla$ and $\Psi$. Let $x_{1}, \dots, x_{n+1}$ be coordinates on $\rea^{n+1\,\ast}$ that are equiaffine with respect to $\hnabla$ and $\Psi^{\ast}$ and dual to the equiaffine coordinates $x^{1}, \dots, x^{n+1}$. This means that the one-forms $dx_{1}, \dots, dx_{n+1}$ on $\rea^{n+1\,\ast}$ constitute a coframe dual to that comprising $dx^{1}, \dots, dx^{n+1}$. Because of this duality, a parallel one-form on $\rea^{n+1\,\ast}$ can be regarded as a parallel vector field on $\rea^{n+1}$, and so it is convenient to write $\pr_{x^{i}}$ for $dx_{i}$. Let $u^{1}, \dots, u^{n}$ be equiaffine coordinates on $(M, D, \om)$. While the coordinates $u^{1}, \dots, u^{n}$ are determined up to automorphisms of $(D, \om)$, in what follows it will be more convenient to think of $D$ and $\om$ as determined by the choice of coordinates. Let $a^{1}(u^{1}, \dots, u^{n}), \dots, a^{n+1}(u^{1}, \dots, u^{n})$ be the components of $A$ with respect to the parallel frame $\pr_{x^{1}}, \dots, \pr_{x^{n+1}}$. In these coordinates $F$ is given by $F(u, x) = \sum_{i = 1}^{n+1}x_{i}a^{i}(u) + Q(u)$.

The Hessian of $F$ is defined with respect to the product connection $\nabla$ and the volume form $\Om$. The differential and Hessian of $F$ are
\begin{align}\label{dhessf}
\begin{split}
dF_{u, x} &= \lb dA_{u}, x \ra + dQ_{u} + \lb A(u), dx\ra = \sum_{i = 1}^{n+1}x_{i}da^{i} + dQ + \sum_{i = 1}^{n+1}a^{i}dx_{i},\\
\hess F_{u, x} &= \lb DdA_{u}, x \ra + DdQ_{u} + 2\lb dA_{u}, dx\ra \\
&= \sum_{i = 1}^{n+1}(da^{i} \tensor dx_{i} + dx_{i} \tensor da^{i})  + \sum_{i = 1}^{n+1}x_{i}Dda^{i} + DdQ.
\end{split}
\end{align}
Let $a^{i}_{I} = \frac{\pr a^{i}}{\pr u^{I}} = da^{i}(\pr_{u^{I}})$ and write $da^{i} = \sum_{I = 1}^{n}a^{i}_{I}du^{I}$ and $A_{I} = dA(\pr_{u^{I}})$. With respect to the chosen equiaffine coordinate frames, $dA$ can be regarded as an $(n+1)\times n$ matrix,
\begin{align}\label{da}
dA = \begin{pmatrix} da^{1} \\ \vdots \\ da^{n+1}\end{pmatrix} = \begin{pmatrix} a^{1}_{1} & \dots & a^{1}_{n}\\ \vdots & \dots & \vdots \\ a^{n+1}_{1} & \dots & a^{n+1}_{n}\end{pmatrix} = \begin{pmatrix} A_{1} & \dots & A_{n}\end{pmatrix}
\end{align}
and the Hessian of $F$ as a matrix with block form
\begin{align}\label{hessfmat}
&\hess F = \begin{pmatrix} \sum_{i = 1}^{n+1}x_{i}Dda^{i} + DdQ & dA^{t} \\ dA & 0 \end{pmatrix},
\end{align}
where the superscript $t$ indicates the matrix transpose.

Define a vector field $V$ on $M \times \rea^{n+1\,\ast}$ by
\begin{align}\label{vdef}
V = \sum_{i = 1}^{n+1}(-1)^{i+1}dA^{(i)}\pr_{x_{i}} 
\end{align}
where $dA^{(i)}$ is the determinant of the $n \times n$ matrix obtained from $dA$ as in \eqref{da} by deleting the $i$th row. A vector field on $\rea^{n+1\,\ast}$ can be regarded as a one-form on $\rea^{n+1}$ and, via this identification, for fixed $u \in M$, $V$ is identified with the one-form which when paired with $X \in \rea^{n+1}$ yields
\begin{align}\label{vdef2}
\lb X, V\ra = \Psi(X, A_{1}, \dots, A_{n}) = \begin{vmatrix} X & A_{1} & \dots & A_{n}\end{vmatrix} = \begin{vmatrix} X & dA \end{vmatrix},
\end{align}
where vertical bars indicate the determinant of a matrix, the determinant is defined relative to the volume form $\Psi$, and the various notations in \eqref{vdef2} are synonymous.

Let $\eul$ be the radial (position) vector field on $\rea^{n+1}$ generating dilations centered at the origin and define the $n$-form $\mu = \imt(\eul)\Psi$. If $X_{1}, \dots, X_{n}$ are vector fields on $\rea^{n+1}$, then
\begin{align}
\mu(X_{1}, \dots, X_{n}) = \Psi(\eul, X_{1}, \dots, X_{n}) = \begin{vmatrix} \eul & X_{1} & \dots & X_{n}\end{vmatrix},
\end{align}
where vertical bars indicate the determinant of a matrix. An immersed codimension one submanifold of $\rea^{n+1}$ is \textit{centroaffine} if it does not contain the origin and is everywhere transverse to $\eul$. An immersion $A:M \to \rea^{n+1}$ of an $n$-manifold is \textit{centroaffine} if $A(M)$ is a centroaffine submanifold. 
Equivalently, the pullback $A^{\ast}(\mu)$ is a volume form on $M$. If $\phi:M^{\prime} \to M$ is a diffeomorphism, then $\phi^{\ast}A^{\ast}(\mu)$ is nonvanishing if and only if $A^{\ast}(\mu)$ is nonvanishing, so the immersion $A$ is centroaffine if and only if the immersion $A \circ \phi:M^{\prime} \to \rea^{n+1}$ is centroaffine.

Define a one-form $\be$ on $M \times \rea^{n+1\,\ast}$ by
\begin{align}\label{bedefined}
\be = \lb A, dx \ra = \sum_{i = 1}^{n+1}a^{i}dx_{i},
\end{align}
and let $\Om = d\be$. Straightforward computations using $da^{1} \wedge \dots \wedge da^{n+1} = 0$ show
\begin{align}\label{becontact}
\be \wedge \Om^{n} = (-1)^{n(n+1)/2}n!A^{\ast}(\mu) \wedge \Psi^{\ast} = (-1)^{n(n+1)/2}n!\begin{vmatrix} A & dA \end{vmatrix} \Upsilon, 
\end{align}
and $\be(V) = \begin{vmatrix} A & dA \end{vmatrix}$. Since $\Om(V, \partial_{u^{I}}) = \begin{vmatrix} A_{I} & dA \end{vmatrix} = 0$, there holds $\imt(V)\Om = 0$. Hence
\begin{align}\label{vom}
\begin{split}
\begin{vmatrix} A & dA \end{vmatrix} \Om^{n} & = \imt(V)(\be \wedge \Om^{n}) = (-1)^{n(n+1)/2}n!\begin{vmatrix} A & dA \end{vmatrix} \imt(V)\Upsilon.
\end{split}
\end{align}

\begin{lemma}\label{immersionlemma}
Let $M$ be an open smooth connected submanifold of $\rea^{n}$ equipped with the induced flat affine connection $D$ and the induced parallel volume form $\om$. Given a smooth map $A:M \to \rea^{n+1}$ and $Q \in \cinf(M)$ define the smooth function $F:M \times \rea^{n+1\,\ast} \to \rea$ by $F(u, x) =\lb A(u), x\ra + Q(u)$ for $u \in M$ and $x \in \rea^{n+1\,\ast}$. Equip $\rea^{n+1\,\ast}$ with the standard flat affine connection $\hnabla$ and parallel volume form $\Psi^{\ast}$, and define the Hessian $\hess F$ with respect to the product flat connection $\nabla$ determined by $D$ and $\hnabla$ and the volume form $\Upsilon = \om \wedge \Psi^{\ast}$. Define a vector field $V$ on $M \times \rea^{n+1\,\ast}$ by \eqref{vdef}. Then:
\begin{enumerate}
\item $V$ is contained in the radical of $\hess F$.
\item\label{uvv} The adjugate tensor $\adj \hess F$ equals $(-1)^{n}V \tensor V$ and the vector field $N^{i}= U^{ij}F_{j}$ equals $N = (-1)^{n}\begin{vmatrix} A & dA \end{vmatrix}V$.
\item $A$ is an immersion if and only if the rank of $\hess F$ is $2n$. In this case the vector field $V$ generates the radical of $\hess F$.
\item\label{cau} The following are equivalent
\begin{enumerate}
\item $A$ is a centroaffine immersion.
\item $\U(F)$ is nowhere vanishing on $M \times \rea^{n+1\,\ast}$. 
\item The one-form $\be$ defined in \eqref{bedefined} is a contact one-form.
\end{enumerate}
In this case, a nontrivial level set $\Sigma$ of $F$ is a smoothly immersed nondegenerate submanifold of $M \times \rea^{n+1\,\ast}$ co-oriented by $N = (-1)^{n}\begin{vmatrix} A & dA \end{vmatrix}V$, and $\adj \hess F = (-1)^{n}\begin{vmatrix} A & dA \end{vmatrix}^{-1}N \tensor N$. Moreover, the restriction to $\Sigma$ of $\Om = d\be$ is a symplectic form.
\item If there is a nonzero constant $\ka$ such that
\begin{align}\label{adak}
A^{\ast}(\mu) = \ka \om
\end{align}
(equivalently, $\be \wedge \Om^{n} = (-1)^{n(n+1)/2}n! \ka \Upsilon$)
then $A$ is a centroaffine immersion and, for a nontrivial level set $\Sigma = \{(u, x) \in M \times \rea^{n+1\,\ast}: F(u, x) = t\}$ of $F$, there hold:
\begin{enumerate}
\item $\Sigma$ has zero equiaffine mean curvature and equiaffine normal
\begin{align}\label{wadak}
\nm = (-1)^{n+1}|\ka|^{-2(n+1)/(n+2)}N = -\sign(\ka)|\ka|^{-n/(n+2)}V.
\end{align}
\item The equiaffine shape operator $S$ of $\Sigma$ is nilpotent with square equal to zero, and $\ker S$ contains the tangent distribution $\T$ of a ruling of $\Sigma$ by $n$-dimensional affine planes Lagrangian with respect to the symplectic form $\Om$.
\item The ruling $\T$ of $\Sigma$ is not cylindrical.
\item The equiaffine metric $h$ of $\Sigma$ has split signature.
\item The Reeb field of $\be$ is $\ka^{-1}V$.
\item\label{param} 
Define $\Phi:\rea \times \rea^{n} \times \rea^{n} \to M \times \rea^{n+1\,\ast}$ by 
\begin{align}\label{levelparam}
\Phi(t, r, s) = (r^{1}, \dots, r^{n}, c^{1}(t, r, s), \dots, c^{n+1}(t, r, s)),
\end{align}
where
\begin{align}
c^{i}(t, r, s) = (-1)^{i+1}\ka^{-1}(t - Q(r))dA^{(i)}(r) + e^{i}(t, r, s),
\end{align}
for $e^{1}(t, r, s) = s^{1}a^{2}(r)$, $e^{n+1}(t, r, s) = -s^{n}a^{n}(r)$, and $e^{i}(t, r, s) = -s^{i-1}a^{i-1}(r) + s^{i}a^{i+1}(r)$ for $2\leq i \leq n$. Then $F(\Phi(t, r, s)) = t$ and for each $t \in \rea$ for which $\Sigma =  \{(u, x) \in M \times \rea^{n+1\,\ast}: F(u, x) = t\}$ is nonempty, $\Phi(t, \dum, \dum): \rea^{n} \times \rea^{n} \to M \times \rea^{n+1,\ast}$ is a parameterization of $\Sigma$.
\item The map $\varphi:\rea \times \rea^{n} \times \rea^{n} \to M \times \rea^{n+1,\ast}$ defined by $\varphi(t, r, s) = \Phi(-|\ka|^{2/(n+2)}t, r, s)$ solves the affine normal flow
\begin{align}\label{mcflow}
\tfrac{d}{dt}\varphi(t, r, s) = \nm_{\varphi(t, r, s)}.
\end{align}
\end{enumerate}
\end{enumerate}
\end{lemma}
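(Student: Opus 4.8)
The plan is to read everything off the block form \eqref{hessfmat}, $\hess F = \left(\begin{smallmatrix} B & dA^{t}\\ dA & 0\end{smallmatrix}\right)$ with $B = \sum_{i}x_{i}Dda^{i}+DdQ$ an $n\times n$ block, together with two determinantal facts. The bottom $n+1$ rows of $\hess F$ equal $(dA\mid 0)$, hence span a space of dimension at most $n$, so $\H(F) = \det\hess F = 0$ identically. That $V$ lies in the radical is a cofactor computation: $\hess F(V,\pr_{u^{I}}) = \sum_{i}(-1)^{i+1}a^{i}_{I}\,dA^{(i)} = \begin{vmatrix}A_{I} & dA\end{vmatrix} = 0$ (repeated column), while $\hess F(V,\pr_{x_{j}}) = 0$ because the $x$--$x$ block vanishes. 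For the rank I would compute $\ker\hess F$ directly: a kernel vector $(v,w)$ forces $w\in\ker dA^{t}$, a space of dimension $n+1-\rank dA$, which is at least $1$ always and exactly $1$ precisely when $A$ is an immersion; and when $A$ is an immersion the full kernel is one-dimensional, hence equals the line spanned by $V$, which is then nonzero. This gives parts (1) and (3) and $\H(F)\equiv 0$.

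For part (2): a symmetric matrix of corank $1$ has adjugate of rank $1$ with image and coimage its radical, so $\adj\hess F = cV\otimes V$ for a scalar function $c$; comparing $(x_{i},x_{i})$-entries where $V^{i}\ne 0$ pins $c$ down, since deleting the $x_{i}$ row and column of $\hess F$ turns the zero block into an $n\times n$ zero block and leaves $dA$ with its $i$th row removed, an $n\times n$ matrix $\widetilde{dA}$ with $\det\widetilde{dA} = dA^{(i)}$, and block elimination gives $\det\left(\begin{smallmatrix} B & \widetilde{dA}^{t}\\ \widetilde{dA} & 0\end{smallmatrix}\right) = (-1)^{n}(\det\widetilde{dA})^{2}$; hence $c\equiv(-1)^{n}$. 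So $U^{ij} = \adj\hess F = (-1)^{n}V^{i}V^{j}$ in equiaffine coordinates, $N^{i} = U^{ij}F_{j} = (-1)^{n}(V^{j}F_{j})V^{i} = (-1)^{n}\begin{vmatrix}A & dA\end{vmatrix}V^{i}$ using $V^{j}F_{j} = \be(V) = \begin{vmatrix}A & dA\end{vmatrix}$, and $\U(F) = N^{i}F_{i} = (-1)^{n}\begin{vmatrix}A & dA\end{vmatrix}^{2}$. Part (4) is then formal: $A$ is a centroaffine immersion $\iff$ $A^{\ast}\mu$ is a volume form $\iff$ $\begin{vmatrix}A & dA\end{vmatrix}$ is nowhere zero (which forces $\rank dA = n$) $\iff$ $\U(F)$ is nowhere zero $\iff$ $\be\wedge\Om^{n}$ is nowhere zero, by \eqref{becontact}. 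Under these hypotheses $dF(V) = \be(V)\ne 0$, so $dF\ne 0$ and Lemma \ref{unondegenlemma} gives that each nonempty level set $\Sigma$ is a smoothly immersed nondegenerate hypersurface co-oriented by $N$, and the adjugate identity for $\Sigma$ is a rewriting of part (2). Since $\imt(V)\Om = 0$ (noted in the text) and $\be$ is contact, $\ker\Om$ is the line spanned by $V$ everywhere; as $V$ is transverse to $\Sigma$, the closed form $\Om$ restricts to a symplectic form on $\Sigma$. The same computation, with $\be(V) = \begin{vmatrix}A & dA\end{vmatrix} = \ka$, identifies the Reeb field of $\be$ as $\ka^{-1}V$, which is (5)(e).

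Now assume $A^{\ast}\mu = \ka\om$, i.e.\ $\begin{vmatrix}A & dA\end{vmatrix}\equiv\ka\ne 0$. Then $A$ is a centroaffine immersion, $\U(F) = (-1)^{n}\ka^{2}$ is a nonzero constant and $\H(F) = 0$; since $\muf = (n+2)^{-1}d\log\U(F) = 0$, formula \eqref{nmdefined} collapses to $\nm = -\sign(\U(F))|\U(F)|^{-(n+1)/(n+2)}N = (-1)^{n+1}|\ka|^{-2(n+1)/(n+2)}N$, and substituting $N = (-1)^{n}\ka V$ gives $\nm = -\sign(\ka)|\ka|^{-n/(n+2)}V$, which is \eqref{wadak}; and Corollary \ref{amccorollary}, applied with the constant $\U(F)$, gives $\amc = 0$ because $\H(F) = 0$. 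For the ruling, over each $u_{0}\in M$ the slice $\Sigma\cap(\{u_{0}\}\times\rea^{n+1\,\ast}) = \{x:\langle A(u_{0}),x\rangle = t-Q(u_{0})\}$ is an $n$-dimensional $\nabla$-totally geodesic affine plane, and these slices form a ruling $\T$ with tangent distribution $T\Sigma\cap(x\text{-directions})$; every $x$-direction is $\Om$-isotropic, so $\T$ is Lagrangian in $\Sigma$, while $\T$ is also isotropic for the equiaffine metric because $\H(F) = 0$ makes the latter a nonzero multiple of the restriction of $\hess F$, whose $x$--$x$ block vanishes; an $n$-dimensional isotropic subspace of the $2n$-dimensional $T\Sigma$ forces split signature, giving (d). Because $\nm$ is a constant multiple of $V$ and the components of $V$ depend only on $u$, $\hnabla_{X}\nm$ takes values in the $x$-directions for every tangent $X$ and vanishes when $X$ is itself an $x$-direction; since $\nm$ is also an $x$-direction, decomposing $\hnabla_{X}\nm$ into its $\Sigma$-tangent and $\nm$-components shows the image of $S$ is contained in $\T\subset\ker S$, hence $S^{2} = 0$. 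Part (c) holds because the leaf over $u_{0}$ has direction space $\ann A(u_{0})$, which cannot be independent of $u_{0}$ without making $A,A_{1},\dots,A_{n}$ everywhere linearly dependent, contradicting $\begin{vmatrix}A & dA\end{vmatrix} = \ka$.

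Finally, for (f) I would check $F(\Phi(t,r,s)) = t$ by direct computation: $\sum_{i}a^{i}(r)c^{i} = \ka^{-1}(t-Q(r))\sum_{i}(-1)^{i+1}a^{i}\,dA^{(i)} + \sum_{i}a^{i}e^{i} = (t-Q(r)) + 0$, using $\sum_{i}(-1)^{i+1}a^{i}\,dA^{(i)} = \begin{vmatrix}A & dA\end{vmatrix} = \ka$ and the telescoping identity $\sum_{i}a^{i}e^{i} = 0$; for fixed $(t,r)$ the affine map $s\mapsto\Phi(t,r,s)$ has base point $\ka^{-1}(t-Q(r))V(r)$, which lies on the leaf since $\langle A(r),V(r)\rangle = \ka$, and linear part spanning $\ann A(r)$, so it sweeps out the leaf over $r$ and $\Phi(t,\cdot,\cdot)$ parameterizes $\Sigma$. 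Part (g) is then immediate: $\tfrac{d}{d\tau}\Phi(\tau,r,s) = \sum_{i}(-1)^{i+1}\ka^{-1}dA^{(i)}(r)\pr_{x_{i}} = \ka^{-1}V$, so $\tfrac{d}{dt}\varphi(t,r,s) = -|\ka|^{2/(n+2)}\ka^{-1}V = -\sign(\ka)|\ka|^{-n/(n+2)}V = \nm_{\varphi(t,r,s)}$ by \eqref{wadak}. The step needing the most care is the sign bookkeeping — the $(-1)^{n}$, $\sign(\ka)$ and co-orientation factors threading through parts (2), (4) and (5)(a) — and the one genuinely structural point is the adjugate computation in part (2), which needs both the rank-one-adjugate fact for corank-one symmetric matrices and the block identity $\det\left(\begin{smallmatrix} B & C^{t}\\ C & 0\end{smallmatrix}\right) = (-1)^{n}(\det C)^{2}$.
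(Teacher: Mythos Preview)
Your proposal is correct and follows essentially the same approach as the paper: the block form of $\hess F$, the cofactor computations for the radical and adjugate, the reduction to Corollary~\ref{amccorollary} once $\U(F)$ is seen to be constant, the ruling by $x$-slices, and the direct verification of the parameterization and flow all match the paper's argument. Your non-cylindricity argument (constancy of $\ann A(u)$ forces $A$ into a fixed line, collapsing $\begin{vmatrix}A & dA\end{vmatrix}$) is cleaner than the paper's longer route via $\hnabla\beta$ and an auxiliary closed one-form, and your adjugate step (rank-one adjugate of a corank-one form, pinned down by a single diagonal minor) is a tidy alternative to the paper's case-by-case enumeration of $2n\times 2n$ minors, but these are refinements within the same framework rather than a genuinely different strategy.
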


\begin{proof}
Since $dF(\pr_{x_{i}}) = a^{i}$, $dF = 0$ if and only if $A$ is the zero map. Hence if $A$ is not the zero map, every level set of $F$ is regular, so smoothly immersed.

Since $\hess F(V, \dum) = \sum_{i = 1}^{n+1}(-1)^{i+1}dA^{(i)}da^{i}$, $\hess F(V, X)$ vanishes for any $X$ tangent to $\rea^{n+1\,\ast}$. On the other hand,
\begin{align}
\hess F(V, \pr_{u^{I}}) = \sum_{i = 1}^{n+1}(-1)^{i+1}dA^{(i)}a^{i}_{I} = \begin{vmatrix} A_{I} & A_{1} & \dots & A_{n} \end{vmatrix} = 0.
\end{align}
Hence  $V$ is contained in the radical of $\hess F$, meaning $\hess F(V, \dum) = 0$, and $\hess F$ is degenerate with rank no greater than $2n$.
 
To analyze the rank and adjugate of $\hess F$ it is most straightforward to examine the matrix representation \eqref{hessfmat}. Consider a $2n \times 2n$ submatrix of \eqref{hessfmat}. If this submatrix is obtained by deleting a row or column not intersecting the null block in the lower right corner, then it contains an $(n+1) \times (n+1)$ null block, and its determinant is $0$. If this submatrix is obtained by deleting a row and column intersecting the null block in the lower right corner, then it contains four $n \times n$ blocks, the lower right of which is null, and its determinant is  the product of the determinants of the antidiagonal blocks multiplied by $(-1)^{n^{2}} = (-1)^{n}$. These antidiagonal blocks are $n \times n$ submatrices of $dA$ and its transpose and so the corresponding minor equals $(-1)^{n}dA^{(i)}dA^{(j)}$. Since the entry in the row $n+j$ and column $n+i$ of the matrix representing $\adj \hess F$ is this minor multiplied by $(-1)^{(n+i)+ (n+j)} = (-1)^{i+j}$, it equals $(-1)^{i+j+n}dA^{(i)}dA^{(j)}$. This shows that 
\begin{align}\label{adjhessf}
\adj \hess F = \begin{pmatrix}
0 & 0 \\ 
0 & C
\end{pmatrix} = (-1)^{n}V \tensor V 
\end{align}
where the $(n+1)\times (n+1)$ matrix $C$ has components $C_{ij} = (-1)^{i+j+n}dA^{(i)}dA^{(j)}$. 
(In the abstract index notation used in most of this paper, \eqref{adjhessf} would be written $U^{ij} = (-1)^{n}V^{i}V^{j}$.) 

If $u^{1}, \dots, u^{n}$ are equiaffine coordinates on $(M, D, \om)$, then $A^{\ast}(\mu) = \begin{vmatrix} A & dA \end{vmatrix}du^{1} \wedge \dots \wedge du^{n}$, or, equivalently,
\begin{align}
\begin{vmatrix} A & dA \end{vmatrix}= \begin{vmatrix} A & A_{1} & \dots & A_{n} \end{vmatrix} = \Psi(A, A_{1}, \dots, A_{n}) =  A^{\ast}(\mu)(\pr_{u^{1}}, \dots, \pr_{u^{n}}),
\end{align}
so that, from \eqref{vdef2}, it follows that $dF(V) = \begin{vmatrix} A & dA \end{vmatrix}$. Consequently, multiplying \eqref{adjhessf} by $dF$ yields
\begin{align}
\label{nav}
&N = (-1)^{n}dF(V)V = (-1)^{n}\begin{vmatrix} A & dA \end{vmatrix}V,\\
\label{exuf}
&\U(F) = dF(N) = (-1)^{n}dF(V)^{2} = (-1)^{n}\begin{vmatrix} A & dA \end{vmatrix}^{2}.
\end{align}

That $A$ be an immersion means that $dA$ has rank $n$ everywhere. The preceding shows directly that the rank of $dA$ is $n$ if and only if $\hess F$ has a nonvanishing $2n \times 2n$ minor, so has rank $2n$. Precisely, $V$ is nowhere vanishing if and only if $dA$ has rank $n$ everywhere, and because $\adj \hess F = (-1)^{n}V \tensor V$ this holds if and only if $\adj \hess F$ has rank $1$ everywhere, which is equivalent to $\hess F$ having rank $2n$ everywhere.

Consequently, if $A$ is an immersion, then the radical of $\hess F$ has dimension one, and since the radical contains the nowhere vanishing vector field $V$, it must be that $V$ generates the radical, meaning that any vector $X$ satisfying $\hess F(X, \dum) = 0$ is a multiple of $V$.

By \eqref{exuf}, $\U(F)$ is nowhere vanishing if and only if $A^{\ast}(\mu)$ is nowhere vanishing, so $A$ is a centroaffine immersion if and only if $\U(F)$ is nowhere vanishing. By \eqref{becontact} this holds if and only if $\be$ is a contact one-form. In this case, by Lemma \ref{unondegenlemma}, there hold the claims in \eqref{cau} regarding $\Sigma$ and $N$. Together \eqref{nav} and \eqref{adjhessf} show $\adj \hess F = (-1)^{n}\begin{vmatrix} A & dA \end{vmatrix}^{-1}N \tensor N$. By \eqref{vom}, $\Om^{n} = (-1)^{n(n+1)/2}n!\imt(V)\Upsilon$, and so the restriction of $\Om$ to $\Sigma$ is nondegenerate. This shows \eqref{cau}.

If there is a constant $\ka \neq 0$ such that there holds \eqref{adak}, then, by \eqref{exuf}, $\U(F) = \ka^{2}$ is constant, so, by Corollary \ref{amccorollary}, the level sets of $F$ have equiaffine mean curvature zero and equiaffine normal given by \eqref{wadak}. It is apparent from the explicit form \eqref{vdef} of $V$ that the equiaffine shape operator $S$ is nilpotent. More precisely, its square is $0$, for its image is contained in the span of the coordinate vector fields $\pr_{x_{1}}, \dots, \pr_{x_{n+1}}$, while its kernel contains these vector fields. Note that the equiaffine normals are constrained to lie in a linear subspace. 

For each $u \in M$ let $c_{1}, \dots, c_{n}$ be vectors in $\rea^{n+1}$ spanning the kernel of the one-form $A(u)$. The components $c_{Ii}$ of $c_{I}$ with respect to the equiaffine coordinates $x^{i}$ on $\rea^{n+1}$ are functions of $u$ alone. The linearly independent vector fields $T_{I} = \sum_{p = 1}^{n+1}c_{Ip}\pr_{x_{p}}$ satisfy $dF(T_{I}) = 0$, so are tangent to $\Sigma$. As $\nabla_{T_{I}}T_{J} = 0$, the $T_{I}$ span a totally geodesic rank $n$ distribution $\T$ tangent to $\Sigma$; its leaves are $n$-dimensional affine planes. Since $\nm$ is a constant multiple of $V$ and $\nabla_{T_{I}}V = 0$, $\T \subset  \ker S$. The annihilator $\ann \T$ of $\T$ is spanned by $\be$ and $du^{1}, \dots, du^{n}$. Were $\T$ cylindrical, then $\ann \T$ would be preserved by $\hnabla$. Since the $du^{i}$ are $\hnabla$-parallel, this can be only if there are smooth functions $b_{I}(u)$, $1 \leq I \leq n$, such that $\hnabla \be = \sum_{I = 1}^{n}b_{I}du^{I}\tensor \be$. Equivalently, $\frac{\pr a^{i}}{\pr u^{I}} = b_{I}a^{i}$. Differentiating this identity yields $\tfrac{\pr^{2}a^{i}}{\pr u^{I}\pr u^{J}} = b_{I}b_{J}a^{i} + a^{i}\tfrac{\pr b_{I}}{\pr u^{J}}$. Hence $a^{i}(\tfrac{\pr b_{I}}{\pr u^{J}} - \tfrac{\pr b_{J}}{\pr u^{I}}) = 0$. Were $a^{i}$ to vanish on an open set, then on this open set there would vanish $\begin{vmatrix} A & dA \end{vmatrix}$, a contradiction. If $\bar{u}$ is a point where no component of $A(\bar{u})$ vanishes, then the same holds in an open neighborhood $U$ of $\bar{u}$. Hence on $U$ there holds $\tfrac{\pr b_{I}}{\pr u^{J}} = \tfrac{\pr b_{J}}{\pr u^{I}}$, so on some possibly smaller neighborhood of $\bar{u}$, also to be called $U$, there is a smooth function $f$ such that $b_{I} = \tfrac{\pr f}{\pr u^{I}}$. Then $\tfrac{\pr}{\pr u^{I}}(e^{-f}a^{i}) = 0$, so on $U$ there are constants $c^{i} \neq 0$ such that $a^{i} = c^{i}e^{-f}$. However, this implies $\begin{vmatrix} A & dA \end{vmatrix} = e^{-(n+1)f}\begin{vmatrix}c & -b_{1}c & \dots & -b_{n}c \end{vmatrix} = 0$, which is a contradiction. It follows that $\T$ is not cylindrical. For any vector field $X$ tangent to $\Sigma$, $\nabla_{X}V$ is contained in the span of the vector fields $\pr_{x_{1}}, \dots, \pr_{x_{n+1}}$. Since $V$ is also contained in this span, and $\nm$ is a constant multiple of $V$, $S(X)$ is contained in the span of the vector fields $\pr_{x_{1}}, \dots, \pr_{x_{n+1}}$. Since then $\nabla_{S(X)}V = 0$, the square of $S$ is $0$. Since $\nabla_{T_{I}}T_{J} = 0$, the equiaffine metric $h$ satisfies $h(T_{I}, T_{J}) = 0$, so $\T$ is an $n$-dimensional $h$-isotropic subspace, which shows that $h$ has split signature. Since $\Om(T_{I}, T_{J}) = 0$, $\T$ is Lagrangian. Since $\be(V) = \ka$ and $\imt(V)\Om = 0$, the Reeb field of $\be$ is $\ka^{-1}V$.

That \eqref{levelparam} satisfies $F(\Phi(t, r, s)) = t$ follows from the observation that $\sum_{i = 1}^{n+1}a^{i}(r)c^{i}(t, r, s) = t$. To show \eqref{mcflow}, it is helpful to rewrite the parameterization \eqref{levelparam} of $\Sigma$ of \eqref{param} in the following way. Let $U_{I} = \pr_{u^{I}}$ and $X_{i} = \pr_{X^{i}}$. For $1 \leq I \leq n$ define $Y_{I} = a^{I+1}X_{I} - a^{I}X_{I+1}$. Then 
\begin{align}\label{param2}
\begin{split}
\Phi(t, r, s) &= \sum_{I = 1}^{n}r^{I}U_{I} + \sum_{I = 1}^{n}s^{I}Y_{I}(r) + \ka^{-1}(t - Q(r))V \\
&= \sum_{I = 1}^{n}r^{I}U_{I} + \sum_{I = 1}^{n}s^{I}Y_{I}(r) + |\ka|^{-2/(n+2)}(Q(r) - t)\nm .
\end{split}
\end{align}
From \eqref{param2} it is clear that $\tfrac{d}{dt}\Phi(t, r, s) = -|\ka|^{-2/(n+2)}\nm_{\Phi(t, r, s)}$, from which \eqref{mcflow} follows.
\end{proof}

\begin{remark}
If $A$ is an immersion and $A(u) = 0$ then the rank of $\hess F$ is $2n$ at $(u, x)$ although $\U(F)$ vanishes at $(u, x)$. This does not contradict Lemma \ref{nondegenlemma} because at $(u, x)$ the nontrivial vector field $V$ is tangent to the level set of $F$ through $(u, x)$ and contained in the radical of $\hess F$, and so this level set is degenerate. Note also that such an immersion $A$ is not centroaffine.
\end{remark}

\begin{remark}
The parameterization \eqref{levelparam} of the level set $\Sigma$ of \eqref{param} or \eqref{param2} of Lemma \ref{immersionlemma} can be rewritten as
\begin{align}\label{param3}
\Phi(t, r, s) = \begin{pmatrix} 0 \\ C(r) \end{pmatrix}s + \begin{pmatrix} r \\ D(t, r)\end{pmatrix},
\end{align}
where the $(n+1) \times n$ matrix $C(r)$ and the $(n+1)$-vector $D(t, r)$ have components
\begin{align}
&C_{iJ}(r) = \begin{cases} a^{i+1}(r) & \text{if}\,\,i = J,\\ -a^{i}(r)&  \text{if}\,\, i = J+1,\\ 0 & \text{otherwise}, \end{cases}&& D^{i}(t, r) = (-1)^{i+1}\ka^{-1}(t - Q(r))dA^{(i)}(r),
\end{align}
where $1 \leq i \leq n+1$ and $1 \leq J \leq n$. From the representation \eqref{param3} it is clear that $\Sigma$ is ruled by $n$-planes.
\end{remark}

\begin{lemma}\label{affinespherelemma}
Let $M$ be an open smooth connected submanifold of $\rea^{n}$ equipped with the induced flat affine connection $D$ and the induced parallel volume form $\om$. Equip $\rea^{n+1\,\ast}$ with the standard flat affine connection $\hnabla$ and parallel volume form $\Psi^{\ast}$. Suppose $Q \in \cinf(M)$ and let $A:M \to \rea^{n+1}$ be a centroaffine immersion satisfying $A^{\ast}(\mu) = \ka \om$ for a nonzero constant $\ka$. A nonempty connected component $\Sigma$ of a level set of the function $F:M \times \rea^{n+1\,\ast} \to \rea$ defined by $F(u, x) =\lb A(u), x\ra + Q(u)$ is an affine sphere if and only if the image of $A$ is contained in a hyperplane.
\end{lemma}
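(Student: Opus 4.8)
The plan is to reduce the statement, via Lemma \ref{immersionlemma}, to the question of when the vector field $V$ of \eqref{vdef} is constant, and then to identify this with the geometric condition on $A$. Since $A$ is a centroaffine immersion, \eqref{cau} of Lemma \ref{immersionlemma} gives that $\Sigma$ is nondegenerate, so asking whether $\Sigma$ is an affine sphere makes sense, and Lemma \ref{immersionlemma} also shows that the equiaffine shape operator $S$ of $\Sigma$ is nilpotent with $S^{2} = 0$. If $\Sigma$ is an affine sphere, then $S = c\,\Id$ with $c$ a constant (constancy by the Gauss--Codazzi equations, as recalled in Remark \ref{canonicalpotentialremark}), and then $c^{2}\,\Id = S^{2} = 0$ forces $c = 0$; conversely $S \equiv 0$ makes $\Sigma$ an improper affine sphere. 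So it suffices to determine when $S$ vanishes identically. First I would note that the projection $\pi:\{F = t\} \to M$ is a surjective submersion: it is surjective because $A(u) \neq 0$ makes each fibre $\{x \in \rea^{n+1\,\ast} : \lb A(u), x\ra = t - Q(u)\}$ a nonempty affine $n$-plane, and it is a submersion because, given $Y \in T_{u}M$, one solves the single linear equation $\lb A(u), Z\ra = -\lb dA_{u}(Y), x\ra - dQ_{u}(Y)$ for $Z$ to lift $Y$ to a vector tangent to $\{F = t\}$. As the fibres and the base $M$ are connected, $\{F = t\}$ is connected, so the component $\Sigma$ equals $\{F = t\}$.

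Next I would compute $S$ explicitly. By \eqref{wadak} the equiaffine normal is $\nm = \lambda V$ with $\lambda = -\sign(\ka)|\ka|^{-n/(n+2)} \neq 0$ a constant, where $V = \sum_{i=1}^{n+1}(-1)^{i+1}dA^{(i)}\,\pr_{x_{i}}$ is a combination of the $\nabla$-parallel vector fields $\pr_{x_{i}}$ with coefficients depending only on $u \in M$. Because $\nm$ is the equiaffine normal and $\nabla$ is flat and preserves $\Upsilon$, the connection $\nm$ induces on $\Sigma$ preserves the induced volume density (by \eqref{an2b} of Theorem \ref{projnormaltheorem}), so by \eqref{nablavolume} the associated connection one-form $\tau$ vanishes; hence \eqref{induced} gives $S(X) = -\nabla_{X}\nm = -\lambda\,\nabla_{X}V$ for $X$ tangent to $\Sigma$. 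Since $\nabla_{X}V = \sum_{i}(-1)^{i+1}\bigl(X\,dA^{(i)}\bigr)\pr_{x_{i}}$ and the functions $dA^{(i)}$ depend only on $u$, the value $\nabla_{X}V$ depends only on the $TM$-component of $X$; because $\pi$ is a surjective submersion and $M$ is connected, it follows that $S \equiv 0$ if and only if every $dA^{(i)}$ is constant on $M$, i.e.\ if and only if $V$ is a constant covector $w$, which is then nonzero since $V$ is nowhere vanishing (Lemma \ref{immersionlemma}).

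Finally I would prove that $V$ is constant if and only if $A(M)$ is contained in a hyperplane. If $V \equiv w$, then for every $I$ and $u$ one has $\lb A_{I}(u), w\ra = \Psi(A_{I}, A_{1}, \dots, A_{n}) = 0$ (a determinant with a repeated column), so $u \mapsto \lb A(u), w\ra$ is constant, say equal to $c$, on the connected manifold $M$, whence $A(M) \subset \{y : \lb y, w\ra = c\}$. Conversely, if $A(M) \subset \{y : \lb y, w\ra = c\}$ with $w \neq 0$, then $\lb A_{I}, w\ra = 0$ for all $I$, so $w$ annihilates the $n$-dimensional subspace spanned by $A_{1}(u), \dots, A_{n}(u)$; the covector $V(u)$ also annihilates this subspace, and since that annihilator is a line and $V(u) \neq 0$, there is a nowhere-vanishing function $\phi$ with $V = \phi\,w$; pairing with $A$ and using $\lb A, V\ra = \begin{vmatrix} A & dA\end{vmatrix} = \ka$ (by the hypothesis $A^{\ast}(\mu) = \ka\om$) gives $\phi\,c = \ka \neq 0$, so $c \neq 0$ and $\phi \equiv \ka/c$ is constant, hence $V$ is constant. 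Chaining the three equivalences then shows that $\Sigma$ is an affine sphere exactly when $A(M)$ lies in a hyperplane, and in that case $S = 0$, so $\Sigma$ is improper. I expect the only mildly delicate step to be the middle equivalence: one must see that the restriction of $\nabla V$ to directions tangent to the single hypersurface $\Sigma$ already detects the full differential of $V$ on $M$, which rests on $\pi$ being a surjective submersion onto the connected base $M$.
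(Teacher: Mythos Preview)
Your proof is correct and, for the converse direction, more direct than the paper's. The forward implication (affine sphere $\Rightarrow$ $V$ constant $\Rightarrow$ $A(M)$ in a hyperplane) is essentially the same as the paper's, though you are more careful: you explicitly justify, via the surjective submersion $\pi:\{F=t\}\to M$, why constancy of the components $dA^{(i)}$ along the single hypersurface $\Sigma$ forces constancy on all of $M$, a point the paper passes over. Your connectedness argument also makes precise why the connected component $\Sigma$ is the entire level set.

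The genuine difference is in the converse. The paper first reduces to the special case where the hyperplane is $\{a^{n+1}=1\}$, computes the minors $dA^{(i)}$ explicitly there, and then for a general hyperplane applies a linear change of coordinates together with the Cauchy--Binet formula to transport the conclusion back. Your argument bypasses all of this: since both $w$ and $V(u)$ annihilate the $n$-dimensional span of $A_{1}(u),\dots,A_{n}(u)$, they lie in a common line, so $V=\phi w$; pairing with $A$ gives $\phi c=\ka$, forcing $\phi$ constant. This is shorter, coordinate-free, and makes transparent the linear-algebraic reason the claim holds. The paper's route has the minor virtue of exhibiting the minors $dA^{(i)}$ explicitly in adapted coordinates, but your argument is the cleaner one.
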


\begin{proof}
Since, by Lemma \ref{immersionlemma}, the shape operator of $\Sigma$ is nilpotent, if $\Sigma$ is an affine sphere it is necessarily improper. This means that the equiaffine normal $\nm$ is parallel along the hypersurface. Since $\nm$ is a constant multiple of the vector field $V$ defined in \eqref{vdef}, it follows that $V$ is $\hnabla$-parallel along $\Sigma$, and so the components $(-1)^{i+1}dA^{(i)}$ of $V$ are equal to constants $p_{i}$, that can be viewed as the components of a constant vector $p \in \rea^{n+1\,\ast}$. Then $\ka = \begin{vmatrix} A & dA \end{vmatrix} = \lb A(u), p\ra$, so the image of the centroaffine immersion $A$ is contained in the hyperplane $\{v \in \rea^{n+1}: \ka = \lb v, p\ra\}$. 

Now suppose $A$ is a centroaffine immersion satisfying $\begin{vmatrix} A & dA \end{vmatrix} = \ka$ for a nonzero constant $\ka$ and having image contained in the hyperplane $\{v \in \rea^{n+1}: 1 = \lb v, p\ra\}$ for a constant vector $p \in \rea^{n+1\,\ast}$. First suppose that $p = dx^{n+1}$. Then $a^{n+1} = 1$, so 
\begin{align}\label{damin}
&dA = \begin{pmatrix}
a^{1}_{1} & \dots & a^{1}_{n}\\ \vdots & \dots & \vdots \\ a^{n}_{1} & \dots & a^{n}_{n}\\ 0 & \dots & 0 
\end{pmatrix},& &\ka  = \begin{vmatrix} A & dA \end{vmatrix} = \begin{vmatrix} a^{1} & a^{1}_{1} & \dots & a^{1}_{n}\\ \vdots & \vdots & \dots & \vdots \\a^{n} & a^{n}_{1} & \dots & a^{n}_{n}\\ 1 & 0 & \dots & 0 \end{vmatrix} = (-1)^{n}dA^{(n+1)}.
\end{align}
Hence, for $1 \leq i \leq n$, the minors $dA^{(i)}$ are zero, while $dA^{(n+1)}$ is constant.
This shows that $V$ is a constant vector, so parallel, so $\Sigma$ is an improper affine sphere. For general $p$ there is $g \in Gl(n+1, \rea)$ such that $g^{\ast}p = dx^{n+1}$, where $g^{\ast}$ indicates the adjoint action defined by $\lb u, g^{\ast}mu \ra = \lb g^{-1}u, \mu\ra$ for $v \in \rea^{n+1}$ and $\mu \in \rea^{n+1\,\ast}$. Then $1 = \lb A, p \ra = \lb gA, dx^{n+1}\ra$. Since $\begin{vmatrix} gA & d(gA) \end{vmatrix} =(\det g)\begin{vmatrix} A & dA \end{vmatrix} = \ka \det g$, the preceding shows that $d(gA)^{(i)} = 0$ for $1 \leq i \leq n$ and $d(gA)^{(n+1)} = (-1)^{n}\ka \det(g)$. By the Cauchy-Binet formula for minors, for $1 \leq i \leq n+1$, $d(gA)^{(i)} = \sum_{j = 1}^{n+1}|g^{(ij)}|dA^{(j)}$, where $|g^{(ij)}|$ is the determinant of the $n \times n$ submatrix of $g$ obtained by deleting the $i$th row and $j$th column. This shows that the minors $dA^{(1)}, \dots, dA^{(n+1)}$ solve $n+1$ constant coefficient linear equations, and so they must be constants. Hence $V$ is parallel and $\Sigma$ is an affine sphere.
\end{proof}

In order to obtain interesting examples from Lemma \ref{immersionlemma}, it is necessary to solve the partial differential equation \eqref{adak} for some constant $\ka \neq 0$. Let $\phi:M \to M^{\prime}$ be a diffeomorphisms between open domains in $\rea^{n}$. Then
\begin{align}\label{adachange}
|d\phi| \begin{vmatrix} A\circ \phi^{-1} & d(A\circ \phi^{-1}) \end{vmatrix} \circ \phi = \begin{vmatrix} A & dA \end{vmatrix}.
\end{align}
If $\phi$ is to be such that $\begin{vmatrix} A\circ \phi^{-1} & d(A\circ \phi^{-1}) \end{vmatrix} = \ka$ for some constant $\ka \neq 0$, then it follows from \eqref{adachange} that it must be that
\begin{align}\label{adaeqn}
|d\phi| = \ka^{-1} \begin{vmatrix} A & dA \end{vmatrix}.
\end{align}
Solving \eqref{adaeqn} is simply the problem of finding a diffeomorphism that pulls a given volume form back to a standard volume form. Beginning with J. Moser's \cite{Moser}, solutions to several versions of this problem have been obtained, for various combinations of hypotheses regarding regularity, total volume, and the topology of the underlying spaces. 

\begin{lemma}\label{deformationlemma}
Let $\Sigma \subset \rea^{n+1}$ be a centroaffine smoothly immersed codimension one submanifold for which there exists a smooth diffeomorphic parameterization $A:M \to \Sigma$ for some open connected smooth submanifold $M$ of $\rea^{n}$, let $\mu = \imt(\eul)\Psi$, and let $\om$ be the standard volume form on $\rea^{n}$ that is preserved by the standard flat affine connection on $\rea^{n}$. Then:
\begin{enumerate}
\item\label{banyagaclaim} If the closure of $M$ is a compact connected smooth manifold with boundary with respect to the smooth structure induced from $\rea^{n}$, then there is a diffeomorphism $\phi:M \to M$ isotopic to the identity and extending to the identity on the boundary of the closure of $M$ such that $B = A \circ \phi:M \to \rea^{n+1}$ satisfies $B^{\ast}(\mu) = \ka \om$.
\item\label{schlenkclaim} If $M$ has infinite volume with respect to $\om$, there exists a smooth embedding $\phi:\rea^{n} \to M$ such that $B = A \circ \phi:M \to \rea^{n+1}$ satisfies $B^{\ast}(\mu) = \ka \om$ for some nonzero constant $\ka$.
\end{enumerate}
\end{lemma}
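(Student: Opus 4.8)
The plan is to read both assertions, as the paragraph preceding the statement already indicates, as instances of the problem of normalizing a volume form, and to obtain them from the appropriate Moser-type theorems: the Moser--Banyaga theorem for manifolds with boundary \cite{Moser, Banyaga} in case \eqref{banyagaclaim}, and the Greene--Shiohama volume-preserving embedding theorem (see \cite{Schlenk}) in case \eqref{schlenkclaim}. The first step, common to both parts, is the reduction. Since $A$ is a centroaffine immersion, the pullback $A^{\ast}(\mu)$ is nowhere-vanishing, hence a volume form on $M$; orienting $M$ so that $A^{\ast}(\mu)$ is positive makes $A^{\ast}(\mu)$ a positive smooth multiple of $\om$. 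For any diffeomorphism or embedding $\phi$ whose domain is an open subset of $\rea^{n}$ one has $(A\circ\phi)^{\ast}(\mu) = \phi^{\ast}(A^{\ast}\mu)$, so in each case it suffices to produce $\phi$ of the asserted type with $\phi^{\ast}(A^{\ast}\mu) = \ka\,\om$.

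For \eqref{banyagaclaim} I would pass to the compact manifold-with-boundary $\overline{M}$, the hypothesis being understood to include that $A$, and hence the volume form $A^{\ast}(\mu)$, extends smoothly up to $\partial\overline{M}$. The constant $\ka$ is then forced by the total-volume obstruction: set $\ka = (\int_{\overline{M}}A^{\ast}\mu)/(\int_{\overline{M}}\om)$, which is finite and positive by compactness. Then $A^{\ast}\mu$ and $\ka\,\om$ are volume forms on $\overline{M}$ inducing the same orientation and having equal total volume, so Banyaga's theorem produces a diffeomorphism $\phi$ of $\overline{M}$ that is isotopic to the identity through diffeomorphisms fixing $\partial\overline{M}$ pointwise, equals the identity on $\partial\overline{M}$, and satisfies $\phi^{\ast}(A^{\ast}\mu) = \ka\,\om$. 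Restricting $\phi$ to the interior $M$ and setting $B = A\circ\phi$ settles this case.

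For \eqref{schlenkclaim}, $M$ is a connected noncompact smooth $n$-manifold equipped with the volume form $A^{\ast}\mu$, and the infinite-volume hypothesis places us in the regime where the Greene--Shiohama theorem applies: $(\rea^{n},\om)$, which has infinite $\om$-volume, admits a volume-preserving smooth embedding $\phi$ into $(M, A^{\ast}\mu)$, i.e.\ $\phi^{\ast}(A^{\ast}\mu) = \om$. Taking $\ka = 1$ (or any other nonzero $\ka$, after precomposing $\phi$ with a suitable linear automorphism of $\rea^{n}$) and $B = A\circ\phi$ gives the claim.

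I expect the only genuine content, beyond invoking the correct form of these theorems, to be the bookkeeping of total volumes, and this is where care is needed. In \eqref{banyagaclaim} it both pins down the value of $\ka$ and forces one to be precise about what the compact-with-boundary hypothesis provides: namely that $A^{\ast}\mu$ is an honest smooth volume form up to and including $\partial\overline{M}$, so that Banyaga's theorem---which requires exactly equality of total volumes of two such forms---applies; absent control at the boundary a boundary-fixing isotopy normalizing $A^{\ast}\mu$ need not exist at all. In \eqref{schlenkclaim} the subtle point is dual: one needs the \emph{target} $(M,A^{\ast}\mu)$ to have infinite total volume so that the \emph{whole} of $(\rea^{n},\om)$, and not merely a finite-volume ball inside it, embeds; the infinite-volume hypothesis in the statement is there precisely to secure this. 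Beyond these two volume checks, no estimate or construction is needed that is not already contained in the quoted Moser-type theorems.
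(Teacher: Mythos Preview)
Your approach matches the paper's: it too invokes Banyaga's theorem for part (1) (with $\ka$ the ratio of total volumes, written there as $\vol_{A^{\ast}(\mu)}(\Sigma)$) and Schlenk's embedding theorem for part (2) (with $\ka = 1$). One small wrinkle: in your closing paragraph you say the infinite-volume hypothesis secures that $(M, A^{\ast}\mu)$ has infinite total volume, but the stated hypothesis is on the $\om$-volume of $M$, not its $A^{\ast}\mu$-volume; the paper's proof is equally terse on this point, so the two arguments are at parity.
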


\begin{proof}
In the setting of \eqref{banyagaclaim}, by the main theorem of A. Banyaga's \cite{Banyaga-formes-volumes} there exists a diffeomorphism $\phi:M \to M$ isotopic to the identity and extending to the identity on the boundary of the closure of $M$ such that $\phi^{\ast}A^{\ast}(\mu) = \vol_{A^{\ast}(\mu)}(\Sigma)\om$.

In the setting of \eqref{schlenkclaim}, by a theorem of F. Schlenk (see Appendix B of \cite{Schlenk-book}) there exists a smooth embedding $\phi:\rea^{n} \to M$ such that $\phi^{\ast}A^{\ast}(\mu) = \om$.
\end{proof}
A version of claim \eqref{banyagaclaim} with lower regularity assumptions could be obtained by using the more well-known theorem of Dacorogna-Moser, \cite{Dacorogna-Moser}, instead of the cited theorem of Banyaga. 

Lemma \ref{deformationlemma} means that if a centroaffine immersed submanifold of $\rea^{n+1}$ admits a smooth parameterization by an open connected submanifold of $\rea^{n}$ that either has infinite volume with respect to the standard volume form $\om$ on $\rea^{n}$ or has compact closure with infinitely smooth boundary, then it admits a parameterization $A$ by such a submanifold $M$ such that $A^{\ast}(\mu)$ is a nonzero constant multiple of the restriction to $M$ of $\om$. Combining Lemmas \ref{immersionlemma} and \ref{deformationlemma} proves the following theorem.

\begin{theorem}\label{lifttheorem}
Given an immersed hypersurface $\Sigma$ in $\rea^{n+1}$ everywhere transverse to the radial vector field $\eul$ and diffeomorphic to a connected open submanifold $M$ of $\rea^{n}$ that either has infinite volume with respect to the standard volume form $\om$ on $\rea^{n}$ or has compact closure with infinitely smooth boundary, there exists a centroaffine parameterization $A:M \to \Sigma$ satisfying $A^{\ast}(\mu) = \ka \om$ for a nonzero constant $\ka$ and the restriction of $\om$ to $M$, and, for any $Q \in \cinf(M)$, the level sets of the function $F:M \times \rea^{n+1\,\ast} \to \rea$ defined by $F(u, x) =\lb A(u), x\ra + Q(u)$ are equiaffine mean curvature zero smooth submanifolds of $M \times \rea^{n+1}$  ruled by $n$-planes tangent to the kernel of $S$ and for which the equiaffine shape operator $S$ is nilpotent of order two. The one-form $\be = \lb A, dx \ra = \sum_{i = 1}^{n+1}a^{i}dx_{i}$ is a contact one-form on $M \times \rea^{n+1\,\ast}$ for which the Reeb field is a constant multiple of the equiaffine normal of the level sets of $F$, and the restriction of $d\be$ to each level set is a symplectic form with respect to which the ruling of the level set is a Lagrangian foliation.
\end{theorem}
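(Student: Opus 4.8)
The plan is to obtain Theorem \ref{lifttheorem} by combining Lemma \ref{deformationlemma} and Lemma \ref{immersionlemma}; between them they already supply all of the analytic and differential-geometric content, so the only thing to do is check that the hypotheses fit together.

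First I would fix any diffeomorphism $A_{0}:M \to \Sigma$ realizing the assumed diffeomorphism type of $\Sigma$. Since $\Sigma$ is everywhere transverse to $\eul$, it avoids the origin, so $A_{0}$ is a centroaffine immersion and $A_{0}^{\ast}(\mu)$ is a volume form on $M$. This places us in the hypotheses of Lemma \ref{deformationlemma}. If $\bar{M}$ is a compact smooth manifold with boundary, part \eqref{banyagaclaim} of that lemma produces a diffeomorphism $\phi:M\to M$, isotopic to the identity, with $A := A_{0}\circ\phi$ satisfying $A^{\ast}(\mu) = \ka\,\om$ for the nonzero constant $\ka = \vol_{A_{0}^{\ast}(\mu)}(\Sigma)$. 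If instead $M$ has infinite $\om$-volume, part \eqref{schlenkclaim} produces a smooth embedding $\phi:\rea^{n}\to M$ with $A := A_{0}\circ\phi$ satisfying $A^{\ast}(\mu) = \om$, after which one relabels the source $\rea^{n}$ as $M$. In either case the outcome is a centroaffine parameterization $A:M\to\Sigma$ (a parameterization of an open piece of $\Sigma$ in the infinite-volume case) with $A^{\ast}(\mu)=\ka\,\om$ for a nonzero constant $\ka$, which is the first assertion of the theorem.

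With this $A$ and the given $Q\in\cinf(M)$ I would then invoke the fifth part of Lemma \ref{immersionlemma} directly. Because $A$ is a centroaffine immersion satisfying the normalization \eqref{adak}, that part states exactly what is wanted: each nonempty level set $\Sigma_{t}$ of $F(u,x)=\lb A(u),x\ra + Q(u)$ is a smoothly immersed nondegenerate submanifold of $M\times\rea^{n+1\,\ast}$ with equiaffine mean curvature zero and equiaffine normal $\nm = -\sign(\ka)|\ka|^{-n/(n+2)}V$; the equiaffine shape operator $S$ is nilpotent of order two, and its kernel contains the tangent distribution $\T$ of a noncylindrical ruling of $\Sigma_{t}$ by $n$-dimensional affine planes; the equiaffine metric has split signature; the one-form $\be = \lb A, dx\ra$ is a contact form (by part \eqref{cau} of the same lemma, since $A$ is centroaffine) whose Reeb field $\ka^{-1}V$ is a constant multiple of $\nm$; and $\Om=d\be$ restricts to a symplectic form on each $\Sigma_{t}$ for which $\T$ is a Lagrangian foliation. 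Collecting these statements is precisely the assertion of the theorem.

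I do not expect any genuinely hard step to remain. The volume-normalization input is quoted as a black box in Lemma \ref{deformationlemma} (through the theorems of Banyaga and of Schlenk), and the computations that make everything work --- the adjugate of $\hess F$, the expression for $N = U^{ij}F_{j}$ in terms of $V$, nilpotence of $S$, the identification of the ruling $\T$, and the computation of the Reeb field --- are all carried out in the proof of Lemma \ref{immersionlemma}. The one point that deserves a sentence of care is the bookkeeping in the infinite-volume case, where Lemma \ref{deformationlemma}\eqref{schlenkclaim} replaces the original source manifold by an embedded copy of $\rea^{n}$: since every conclusion is local over $M$ and is unaffected by this relabeling, this causes no difficulty, and the constructed hypersurfaces are then level sets of $F$ over the new domain.
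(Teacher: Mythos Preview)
Your proposal is correct and follows exactly the approach of the paper, which states that the theorem is proved by combining Lemmas \ref{immersionlemma} and \ref{deformationlemma}. Your added remarks about the bookkeeping in the infinite-volume case and the identification of which parts of Lemma \ref{immersionlemma} yield which conclusions are accurate elaborations of that one-line argument.
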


\subsection{Examples}\label{zamcsection}
When $n = 1$, the surfaces obtained from the construction of section \ref{ruledexamplesection} are a subset of those described in section two of \cite{Martinez-Milan}. 

Let $a(t)$ and $b(t)$ be the components of an immersion $A:I \subset \rea \to \rea^{2}$. Then $\begin{vmatrix} A & dA \end{vmatrix} = ab^{\prime} - ba^{\prime}$ equals a nonzero constant $\ka$ if and only if $a$ and $b$ are linearly independent solutions of a homogeneous linear second order differential equation $f^{\prime\prime}(t) + r(t)f(t) = 0$. Suppose that there exist linearly independent twice differentiable solutions $a(t)$ and $b(t)$ defined for all $t \in \rea$. Then $\ka = ab^{\prime} - a^{\prime}b$. Let $Q \in \cinf(\rea)$ and define $F(u, x, y) = a(u)x + b(u)y + Q(u)$. Then $dF = adx + bdy + (a^{\prime}(u)x + b^{\prime}(u)y + Q^{\prime}(u))du$ and $\hnabla dF = (a^{\prime}(u)dx  + b^{\prime}(u)dy) \tensor du + du \tensor (a^{\prime}(u)dx  + b^{\prime}(u)dy) + (xa^{\prime\prime}(u) + yb^{\prime\prime}(u) + Q^{\prime\prime}(u))du \tensor du$. While $\H(F) = 0$, $\U(F) = -(ab^{\prime} - a^{\prime}b)^{2} = -\ka^{2}$, so that $\muf = 0$. The equiaffine normal \eqref{nm1} is $\nm = \sign(\ka)|\ka|^{-1/2}(a^{\prime}(u)\pr_{y} - b^{\prime}(u)\pr_{x})$. The level set $\Sigma_{t} = \{(u, x, y) \in \rea^{3}: F(u, x, y) = t\}$ is the ruled surface given parametrically by 
\begin{align}\label{helicoid}
\begin{split}
\Sigma_{t} &= \{(r, sb(r) + \ka^{-1}(t - Q(r))b^{\prime}(r), -sa(r) - \ka^{-1}(t-Q(r))a^{\prime}(r))\in \rea^{3}: (r, s) \in \rea^{2}\}.
\end{split}
\end{align}
By Corollary \ref{amccorollary}, a linear rescaling of \eqref{helicoid} solves the affine normal flow.
Since $a$ and $b$ do not vanish simultaneously, $Y = b\pr_{x} - a\pr_{y}$ never vanishes and is tangent to $\Sigma$. Since along $\Sigma$ there holds $0 = F(u, x, y) = a(u)x + b(u) y$ there exist functions $p(u)$ and $q(u)$ such that $qx = pb$ and $qy = -pa$. Note that $p$ and $q$ are necessarily nonvanishing. Then the vector field $X = p(b^{\prime}\pr_{x} - a^{\prime}\pr_{y}) + q \pr_{z}$ is tangent to $\Sigma$ and independent of $Y$, for $Y \wedge X = p(ab^{\prime} - a^{\prime}b)\pr_{x}\wedge \pr_{y} = \ka p \pr_{x}\wedge \pr_{y} \neq 0$. Since $\hnabla_{Y}\nm = 0$ and $\hnabla_{X}\nm = \sign(\ka)|\ka|^{1/2}qr Y$, the equiaffine shape operator is given by $S(Y) = 0$ and $S(X) = -\sign(\ka)|\ka|^{1/2}qr Y$, so is nilpotent and has zero trace. This shows directly that the surface $\Sigma$ has equiaffine mean curvature zero. 

Particular special cases of this construction are well known. Taking $q = 1$ and $a(t) = \sin t$ and $b(t) = -\cos(t)$, so $F(u, x, y) = x\sin u - y \cos u$, there results the usual helicoid $\{(r, s\cos r, s\sin r)\in \rea^{3}: (r, s) \in \rea^{2}\}$. In this case $\ka = -1$ and $\nm = \sin u \pr_{x} - \cos u \pr_{y}$. Taking $q = -1$ and $a(t) = e^{-t}$ and $b = -e^{t}$, so $F(u, x, y) = xe^{-u} - ye^{u}$, there results the surface $x = ye^{2u}$. In this case $\ka = 2$ and $\nm = 2^{-1/2}(e^{u}\pr_{x} - e^{-u}\pr_{y})$. These two examples are given in section $3$ of \cite{Verstraelen-Vrancken}.


A modification of these examples allows the construction of examples in higher dimensions.
\begin{lemma}\label{extensionlemma}
Let $M$ be a an open smooth connected submanifold of $\rea^{n-1}$ equipped with the induced flat affine connection and parallel volume form $\om$ and let $B:M \to \rea^{n}$ be a smooth centroaffine immersion satisfying $\begin{vmatrix} B & dB \end{vmatrix} = \ka$ for $0 \neq \ka \in \rea$, with respect to the standard flat affine connection and parallel volume form on $\rea^{n}$. Let $a, b, c \in \cinf(I)$ be smooth functions on a connected open subinterval $I \subset \rea$. Endow $I \times M$ with the product flat affine connection and the parallel volume form $dt \wedge \om$, and regard $\rea^{n+1}$ as $\rea^{n} \times \rea$, endowed with the standard flat affine connection and parallel volume form. If there is a nonzero constant $\bar{\ka}$ such that 
\begin{align}\label{barkay}
(-1)^{n}\ka \left(ab^{\prime} - a^{\prime}a\right)a^{n-1}c^{n-1} = \bar{\ka},
\end{align}
then the map $A:I \times M \to \rea^{n+1}$ defined by $A(t, u) = (a(t) B(uc(t)), b(t))$ is a smooth centroaffine immersion satisfying $\begin{vmatrix} A & dA \end{vmatrix} = \bar{\ka}$. In particular,
\begin{enumerate}
\item\label{barkay1} If $a, b \in \cinf(I)$ are linearly independent solutions on $I \subset \rea$ of a second order linear differential equation and $a$ does not vanish on $I$, then  the map $A:I \times M \to \rea^{n+1}$ defined by $A(t, u) = (a(t) B(u/a(t)), b(t))$ is a smooth centroaffine immersion satisfying $\begin{vmatrix} A & dA \end{vmatrix} = (-1)^{n}\tau\ka$ where $\tau$ is the constant such that $\tau = ab^{\prime} - a^{\prime}b$. 
\item\label{barkay2} If $n$ is even, and $a$ and $ab^{\prime} - a^{\prime}b$ are nonvanishing on $I$, then $c = a^{-1}(ab^{\prime} - a^{\prime}b)^{1/(n-1)}$ yields a solution of \eqref{barkay} with $\bar{\ka} = \ka$.
\item If $n$ is odd, and $a$ and $ab^{\prime} - a^{\prime}b$ are nonvanishing on $I$, then $c = a^{-1}|ab^{\prime} - a^{\prime}b|^{1/(n-1)}$ yields a solution of \eqref{barkay} with $\bar{\ka} = -\sign(ab^{\prime} - a^{\prime}b)\ka$.
\end{enumerate}
\end{lemma}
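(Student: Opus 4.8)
The plan is to reduce everything to a single $(n+1)\times(n+1)$ determinant computation. First I would fix equiaffine coordinates $u=(u^{1},\dots,u^{n-1})$ on $M$, so that $\om=du^{1}\wedge\dots\wedge du^{n-1}$, and use $(t,u^{1},\dots,u^{n-1})$ as coordinates on $I\times M$, compatible with the parallel volume form $dt\wedge\om$. Writing $B^{1},\dots,B^{n}$ for the components of $B$ and $B^{j}_{I}=\pr B^{j}/\pr v^{I}$, all evaluated at $c(t)u$, the map $A=(aB^{1},\dots,aB^{n},b)$ has partial derivatives
\[
A_{t}=\Bigl(a'B^{j}+ac'\sum_{k=1}^{n-1}u^{k}B^{j}_{k},\ b'\Bigr),\qquad A_{u^{I}}=\bigl(ac\,B^{j}_{I},\ 0\bigr).
\]
Next I would evaluate $\begin{vmatrix}A & dA\end{vmatrix}=\Psi(A,A_{t},A_{u^{1}},\dots,A_{u^{n-1}})$: performing the column operation that replaces the $A_{t}$-column by $A_{t}-\sum_{k}(c'/c)u^{k}A_{u^{k}}$ annihilates the chain-rule terms and leaves the column $(a'B^{1},\dots,a'B^{n},b')$ without changing the determinant, and then writing each $A_{u^{I}}$-column as $ac\cdot(B^{1}_{I},\dots,B^{n}_{I},0)$ pulls a scalar $(ac)^{n-1}$ out front.

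After these reductions the remaining matrix has last row $(b,b',0,\dots,0)$, so a Laplace expansion along that row leaves exactly two terms, whose $n\times n$ minors are $a'\begin{vmatrix}B & dB\end{vmatrix}=a'\ka$ and $a\begin{vmatrix}B & dB\end{vmatrix}=a\ka$ by the hypothesis on $B$. Collecting the cofactor signs then yields the identity $\begin{vmatrix}A & dA\end{vmatrix}=(-1)^{n}\ka\,(a'b-ab')\,a^{n-1}c^{n-1}$, which is the content of \eqref{barkay} with $\bar\ka=\begin{vmatrix}A & dA\end{vmatrix}$. Under the standing hypothesis \eqref{barkay} this is a nonzero constant, so $A^{\ast}(\mu)=\begin{vmatrix}A & dA\end{vmatrix}(dt\wedge\om)=\bar\ka(dt\wedge\om)$ is a volume form on $I\times M$ (restricted, if necessary, to the open set where $c(t)u\in M$). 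Since a map whose pullback of $\mu=\imt(\eul)\Psi$ is a volume form is automatically an immersion that is everywhere transverse to $\eul$ --- otherwise $A$, together with the columns $A_{t},A_{u^{I}}$, would be linearly dependent and the determinant would vanish --- this shows $A$ is a smooth centroaffine immersion with $\begin{vmatrix}A & dA\end{vmatrix}=\bar\ka$; smoothness is immediate from that of $a$, $b$, $c$, and $B$.

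Finally I would obtain the three particular cases by substituting the prescribed $c$ into this identity and checking that the right-hand side is a nonzero constant. For \eqref{barkay1}, linear independence of $a$ and $b$ as solutions of a homogeneous second-order linear equation makes the Wronskian $ab'-a'b$ a nonzero constant $\tau$, and the choice $c=1/a$ makes $a^{n-1}c^{n-1}=1$, so $\begin{vmatrix}A & dA\end{vmatrix}$ is a nonzero constant multiple of $\ka\tau$. For \eqref{barkay2} and the remaining odd-$n$ case, one chooses $c$ so that $(ac)^{n-1}$ exactly cancels the factor $ab'-a'b$ in the displayed identity; this is admissible because $a$ and $ab'-a'b$ are nowhere zero, and the dichotomy between $n$ even and $n$ odd reflects that the required $(n-1)$-st root is defined for either sign of $ab'-a'b$ when $n-1$ is odd but for only one sign when $n-1$ is even --- which is why the absolute value and the factor $\sign(ab'-a'b)$ appear in the odd case. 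I expect the only delicate point to be the sign bookkeeping in the cofactor expansion (and keeping track of the implicit domain restriction $c(t)u\in M$); the rest is routine linear algebra once the column reductions are set up.
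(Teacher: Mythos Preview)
Your proposal is correct and follows essentially the same route as the paper: both reduce to the determinant $\Psi(A,A_{t},A_{u^{1}},\dots,A_{u^{n-1}})$, expand along the last row $(b,b',0,\dots,0)$, and use that the chain-rule contribution $ac'\sum_{k}u^{k}B_{k}$ does not affect the determinant because it is a linear combination of the columns $B_{I}$ (you remove it by a column operation beforehand, the paper removes it after expanding; these are the same step). The remaining specializations for the three particular cases are handled identically, and your caveat about sign bookkeeping is apt---the displayed formula in the statement and the final line of the paper's computation each carry an evident typo in the Wronskian factor.
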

\begin{proof}
Although this is a straightforward computation, the details are given for clarity. 
Using a slightly abusive matrix notation, and calculating the determinant by expansion by cofactors along the last row, one obtains
\begin{align}
\begin{split}
\begin{vmatrix} A& dA \end{vmatrix} & = 
\begin{vmatrix} 
aB& a^{\prime}B + ac^{\prime}\sum_{i = 1}^{n-1}u^{i}\tfrac{\pr B}{\pr u^{i}} & ac dB\\
b & b^{\prime} & 0 \\
\end{vmatrix} \\
& = (-1)^{n+1}b \begin{vmatrix} a^{\prime} B  + ac^{\prime}\sum_{i = 1}^{n-1}u^{i}\tfrac{\pr B}{\pr u^{i}}& ac dB \end{vmatrix} + (-1)^{n}b^{\prime}\begin{vmatrix} aB & ac dB\\ \end{vmatrix}\\
& = (-1)^{n+1}\left(b \begin{vmatrix} a^{\prime} B & ac dB \end{vmatrix} +b \begin{vmatrix}  ac^{\prime}\sum_{i = 1}^{n-1}u^{i}\tfrac{\pr B}{\pr u^{i}}& ac dB \end{vmatrix} - b^{\prime}\begin{vmatrix} aB & ac dB\\ \end{vmatrix}\right)\\
& = (-1)^{n+1}\left(ba^{\prime} - b^{\prime}a\right)a^{n-1}c^{n-1}\begin{vmatrix} B & dB \end{vmatrix} = (-1)^{n}\ka \left(ab^{\prime} - a^{\prime}a\right)b^{n-1}c^{n-1},
\end{split}
\end{align}
where the penultimate equality holds because $\sum_{i = 1}^{n-1}u^{i}\tfrac{\pr B}{\pr u^{i}}$ is a linear combination of the columns of $dB$. Choosing $c(t) = 1/a(t)$ yields the conclusion \eqref{barkay1}. The remaining conclusions are straightforward.
\end{proof}

For example, applying Lemma \ref{extensionlemma} with $B$ being the helicoid and $a(t) = \cosh t$ and $b(t) = \sinh(t)$ yields, after relabeling variables,
\begin{align}
A(x_{1}, x_{2}) = \begin{pmatrix} \cosh{x_{1}} \cos( x_{2}\sech{x_{1}}) \\ \cosh{x_{1}}\sin( x_{2}\sech{x_{1}})  \\ \sinh{x_{1}} \end{pmatrix},
\end{align}
that satisfies $\begin{vmatrix} A & dA \end{vmatrix} = -1$ on all of $\rea^{2}$. It follows that, for any $Q \in \cinf(\rea^{2})$, the level sets of the function 
\begin{align}
\begin{split}
F(x_{1}, &x_{2}, x_{3}, x_{4}, x_{5}) \\
&= x_{3} \cosh{x_{1}} \cos( x_{2}\sech{x_{1}}) + x_{4} \cosh{x_{1}} \sin( x_{2}\sech{x_{1}}) + x_{5}\sinh{x_{1}} + Q(x_{1}, x_{2})
\end{split}
\end{align}
are equiaffine mean curvature zero smooth hypersurfaces ruled by $2$-planes.

A different example, directly generalizing the helicoid, can be obtained by starting from a centroaffine immersion and explicitly solving for a diffeomorphism as in Theorem \ref{lifttheorem}. Look for an immersion $A: \rea^{2} \to \rea^{3}$ that takes values in the Euclidean unit sphere. The standard parameterization of the upper half sphere by spherical polar coordinates does not satisfy that $\begin{vmatrix} A & dA\end{vmatrix}$ is constant, but by solving \eqref{adaeqn} there is obtained the parameterization $A:\Sigma \to \rea^{3}$,
\begin{align}
A(x_{1}, x_{2}) = \begin{pmatrix} \sqrt{1 - x_{1}^{2}}\cos x_{2} \\ \sqrt{1 - x_{1}^{2}}\sin x_{2} \\ x_{1} \end{pmatrix},
\end{align}
defined on the strip $\Sigma = \{x \in \rea^{2}: x_{1} \in (-1, 1)\}$ and satisfying $\begin{vmatrix} A & dA \end{vmatrix} = -1$. There are many diffeomorphisms from $\rea^{2}$ to $\Sigma$ with Jacobian determinant $1$. Simply take a diffeomorphism $\psi:(-1, 1) \to \rea$ and define $(x_{1}, x_{2}) = \phi(t, s) = (\psi(t),  s/\psi^{\prime}(t))$. For a concrete example, take $\psi(t) = \tanh{t}$. After relabeling $t$ and $s$, there results the parameterization $A:\rea^{2} \to \rea^{3}$,
\begin{align}\label{ahelic}
A(x_{1}, x_{2}) = \begin{pmatrix} \sech{x_{1}} \cos( x_{2}\cosh^{2}{x_{1}}) \\ \sech{x_{1}} \sin( x_{2}\cosh^{2}{x_{1}})  \\ \tanh{x_{1}} \end{pmatrix},
\end{align}
that satisfies $\begin{vmatrix} A & dA \end{vmatrix} = -1$. The immersion \eqref{ahelic} can be obtained from \eqref{barkay2} of Lemma \ref{extensionlemma}, by taking $a = \sech t$, $b = \tanh t$ and $c = \cosh^{2}t$.
It follows that the level sets of the function \eqref{genhel} are equiaffine mean curvature zero smooth hypersurfaces ruled by $2$-planes. 

The special case of Theorem \ref{lifttheorem} where the image $A(M)$ is a graph can be modeled by taking $a^{i+1}(u^{1}, \dots, u^{n}) = u^{i}$ for $1 \leq i \leq n$. Write $a^{1}(u)= f(u)$. Then $dF(V) = f - \sum_{i = 1}^{n}u^{i}f_{i}$ where $f_{i} = df(\pr_{u^{i}})$. Choose any function $g$ so that $\sum_{i= 1}^{n}u^{i}g_{i} = g$, choose $\ka \neq 0$, and define $f = g + \ka$. Then $dF(V) = \ka$, so $\U(F) = \ka^{2}$. The level sets of $F$ passing through points of $\rea^{5}$ in the complement of the set where $g$ fails to be smooth are nondegenerate smooth hypersurfaces having zero equiaffine mean curvature.

To have a concrete example, let $k$ be a positive integer and let $g(u, v) = (u^{2k} + v^{2k})^{1/2k}$. After relabeling variables there results
\begin{align}
F(x_{1}, x_{2}, x_{3}, x_{4}, x_{5}) = (x_{1}^{2k} + x_{2}^{2k})^{1/2k}x_{3} + x_{3} + x_{1}x_{4} + x_{2} x_{5}.
\end{align}
Note that the level set $\{x \in \rea^{5}:F(x) = c\}$ is the graph $\{x_{3} = u(x_{1}, x_{2}, x_{4}, x_{5})\}$ of the function 
\begin{align}
u(x_{1}, x_{2}, x_{4}, x_{5}) = \tfrac{c - x_{1}x_{4} - x_{2}x_{5}}{1 + (x_{1}^{2k} + x_{2}^{2k})^{1/2k}},
\end{align}
on $\{x \in \rea^{5}: x_{3} = 0\}$ along $\pr_{3}$. These level sets are smooth except where both $x_{1}$ and $x_{2}$ vanish. These examples are somewhat unsatisfying because of the nonregularity along the locus $\{x \in \rea^{5}: x_{1} = 0 = x_{2}\}$. 

\section{Comparison of equiaffine and unit normals of a hypersurface in a space form}\label{comparisonsection}
The second fundamental form of an immersion $\Sigma \to M$ with respect to an affine connection $\hnabla$ on $M$ depends only on the the projective equivalence class of $\hnabla$, as, for vector fields $X$ and $Y$ tangent to $\Sigma$, the projections of $\hnabla_{X}Y + \ga(X)Y + Y\ga(X)$ and $\hnabla_{X}Y$ onto the normal bundle of $\Sigma$ are equal. Thus it makes sense to speak of the second fundamental form of an immersion in a manifold equipped with a projective structure, and it makes sense to say that such an immersion is nondegenerate if the second fundamental form is nondegenerate.

From \eqref{induced} it is apparent that the connections induced on $\Sigma$ by a fixed $\hnabla \in [\hnabla]$ with respect to transversals $N$ and $\tilde{N} = aN$ are the same, while the connections $\nabla$ and $\tnabla$ induced on $\Sigma$ by $\hnabla$ and $\hnabla + 2\ga_{(i}\delta_{j)}\,^{k}$ with respect to a fixed transversal $N$ are projectively equivalent, related by $\tnabla = \nabla + 2\ga_{(I}\delta_{J)}\,^{K}$. Here, as throughout this section, tensors on a hypersurface $\Sigma$ are labeled with capital Latin abstract indices. Consequently, $[\hnabla]$ and a line field transverse to $\Sigma$ determine on $\Sigma$ a projective structure $\en$.

Although the nondegeneracy of a hypersurface depends only on the projective equivalence class of the ambient connection, the affine normal distributions induced by different connections generating the same flat projective structure are different. The precise relation is given by Lemma \ref{projchangelemma}.

\begin{lemma}\label{projchangelemma}
Let $\Sigma$ be a nondegenerate immersed hypersurface in an $(n+1)$-dimensional manifold $M$ equipped with a projectively flat affine connection $\hnabla$. If $\nm$ is a local section over the open set $U \subset \Sigma$ of the affine normal distribution determined by $\hnabla$ then the affine normal distribution determined by the projectively equivalent connection $\hnabla + 2\si_{(i}\delta_{j)}\,^{k}$ is spanned over $U$ by $\nm + \si^{\sharp}$ where $\si^{\sharp}$ is any vector field equal to $h^{IP}\si_{P}$ along $U$, where $h$ is the representative of the second fundamental form determined by $\hnabla$ and $\nm$.
\end{lemma}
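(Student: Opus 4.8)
The plan is to fix, over $U$, a section $\nm$ of the affine normal distribution determined by $\hnabla$, to determine the triple $(\nabla', h', \tau')$ induced on $\Sigma$ by the modified connection $\hnabla' := \hnabla + 2\si_{(i}\delta_{j)}\,^{k}$ together with the \emph{same} transversal $\nm$, and then to read off the affine normal distribution of $\hnabla'$ from the normalization \eqref{zdet} in the proof of Theorem \ref{projnormaltheorem}. Note first that $\hnabla'$ is projectively equivalent to $\hnabla$, hence again projectively flat, so Lemma \ref{taulemma} and all of Theorem \ref{projnormaltheorem} apply to $\hnabla'$.

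For vector fields $X$, $Y$ tangent to $\Sigma$ one has $\hnabla'_{X}Y = \hnabla_{X}Y + \si(X)Y + \si(Y)X$, and the two new terms are tangent to $\Sigma$; comparing with \eqref{induced} gives $h' = h$ and $\nabla' = \nabla + 2\si_{(I}\delta_{J)}\,^{K}$, where $\si_{I}$ denotes the pullback of $\si$ to $\Sigma$. Likewise $\hnabla'_{X}\nm = \hnabla_{X}\nm + \si(X)\nm + \si(\nm)X$, whose component along $\nm$ is $(\tau(X)+\si(X))\nm$, so $\tau'_{I} = \tau_{I} + \si_{I}$ (the remaining tangential term $\si(\nm)X$ only alters the shape operator). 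Thus replacing $\hnabla$ by $\hnabla'$ has the same effect on the induced data as performing the projective change of $\nabla$ on $\Sigma$ and shifting $\tau$ by $\si|_{T\Sigma}$.

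Next I would compute how the relevant contraction transforms. Under $\nabla' = \nabla + 2\si_{(I}\delta_{J)}\,^{K}$ the covariant two-tensor $h$ satisfies $\nabla'_{I}h_{PQ} = \nabla_{I}h_{PQ} - 2\si_{I}h_{PQ} - \si_{P}h_{IQ} - \si_{Q}h_{PI}$, so tracing with $h^{PQ}$ yields $h^{PQ}\nabla'_{I}h_{PQ} = h^{PQ}\nabla_{I}h_{PQ} - 2(n+1)\si_{I}$; together with $\tau'_{I} = \tau_{I} + \si_{I}$ this gives $n\tau'_{I} + h^{PQ}\nabla'_{I}h_{PQ} = \bigl(n\tau_{I} + h^{PQ}\nabla_{I}h_{PQ}\bigr) - (n+2)\si_{I}$. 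Because $\nm$ is the affine normal of $\hnabla$, equation \eqref{zdet0} (which follows from Lemma \ref{taulemma} and the alignment condition of Theorem \ref{projnormaltheorem}) gives $n\tau_{I} + h^{PQ}\nabla_{I}h_{PQ} = 0$, so $n\tau'_{I} + h^{PQ}\nabla'_{I}h_{PQ} = -(n+2)\si_{I}$. Applying \eqref{zdet} to $\hnabla'$ with starting transversal $\nm$, the affine normal distribution of $\hnabla'$ over $U$ is spanned by $\nm + Z$, where $Z^{P}h_{PI} = -\tfrac{1}{n+2}\bigl(n\tau'_{I} + h^{PQ}\nabla'_{I}h_{PQ}\bigr) = \si_{I}$, that is, $Z = h^{IP}\si_{P} = \si^{\sharp}$, which is the assertion.

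The only delicate point I anticipate is the sign and index bookkeeping in the projective transformation law for $\nabla_{I}h_{PQ}$ and in the normalization \eqref{zdet}; once those conventions are fixed, every step is a one-line computation, the conceptual content being merely that the $\hnabla$- and $\hnabla'$-affine normals are singled out by the vanishing of the same expression $n\tau_{I} + h^{PQ}\nabla_{I}h_{PQ}$ built from the respective induced data, and that this expression changes by the explicitly computed term $-(n+2)\si_{I}$. Alternatively, one can bypass \eqref{zdet} by checking directly, via \eqref{transform} and \eqref{transform2}, that the connection induced on $\Sigma$ by $\hnabla'$ and the transversal $\nm + \si^{\sharp}$ is aligned with respect to $[h]$, which by \eqref{an1} of Theorem \ref{projnormaltheorem} identifies $\nm + \si^{\sharp}$ as spanning the $\hnabla'$-affine normal distribution; I would present whichever version is shorter after the conventions are pinned down.
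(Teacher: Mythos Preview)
Your argument is correct and follows the same approach as the paper: compute the data $(\nabla',h',\tau')$ induced by $\hnabla'$ and the original transversal $\nm$, then apply the normalization \eqref{zdet} to read off the required tangential correction $Z=\si^{\sharp}$. In fact your version is slightly more careful than the paper's own proof, which tacitly assumes $\tau_{I}=0$ for the original $(\hnabla,\nm)$ (writing ``$\nabla$ preserves $\vol_{h}$'' and ``$\tau_{I}=\si_{I}$''); you instead keep track of the original $\tau$ and use only the vanishing of $n\tau_{I}+h^{PQ}\nabla_{I}h_{PQ}$ from \eqref{zdet0}, which is what holds for an arbitrary section of the affine normal distribution.
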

\begin{proof}
Let $h$, $\tau$, and $\bnabla$ be determined by $\hnabla +  2\si_{(i}\delta_{j)}\,^{k}$ and $\nm$. Let $\nabla$ be the connection induced by $\hnabla$ and $\nm$. Then $\bnabla = \nabla + 2\si_{(I}\delta_{J)}\,^{K}$ and $\tau_{I} = \si_{I}$. As $\nabla$ preserves $\vol_{h}$, $h^{PQ}\bnabla_{I}h_{PQ} = -2(n+1)\si_{I}$. By construction the affine normal distribution determined by $\hnabla +  2\si_{(i}\delta_{j)}\,^{k}$ is spanned by $\tilde{\nm} = \nm + Z$ where $-(n+2)Z^{P}h_{IP} = \tau_{I} + h^{PQ}\bnabla_{I}h_{PQ} = -(n+2)\si_{I}$. This proves the claim. 
\end{proof}
\begin{remark}
In Lemma \ref{projchangelemma}, the hypothesis that $\hnabla$ be projectively flat is unnecessary; see \cite{Fox-ahs}.
\end{remark}

Lemma \ref{projchangelemma} shows that any transversal spans the affine normal distribution determined by some connection projectively equivalent to a given projectively flat connection. So, while there is a model of hyperbolic space for which the Levi-Civita connection of the hyperbolic metric is projectively equivalent to the standard flat Euclidean connection, the affine normals of a hypersurface determined by these connections are different. This raises the possibility that there are interesting examples in the affine geometry of hypersurfaces in constant curvature pseudo-Riemannian spaces. 

The Levi-Civita connection $\hnabla$ of a pseudo-Riemannian manifold $(M, g)$ is projectively flat if and only if $g$ has constant curvature. When the metric $g$ has indefinite signature there are two notions of nondegeneracy of an immersed co-oriented hypersurface $\Sigma \subset M$. First, there is the equiaffine notion of nondegeneracy with respect to $\hnabla$, necessary to make sense of the co-oriented equiaffine normal $\nm$ determined by $\hnabla$ and and the volume form $\vol_{g}$ of $g$, as in Theorem \ref{projnormaltheorem}. Second, there is the usual pseudo-Riemannian notion of nondegeneracy of $\Sigma$, that the restriction of $g$ to $\Sigma$ be nondegenerate, necessary to make sense of the co-oriented unimodular normal field $E$ $g$-orthogonal to $\Sigma$. That $E$ be unimodular means that $|g(E, E)| = 1$. Theorem \ref{normalstheorem} shows that for a hypersurface nondegenerate in both senses these normals are proportional exactly when the hypersurface has constant Gauss-Kronecker curvature. By the Gauss-Kronecker curvature is meant the determinant of the shape operator determined by the Levi-Civita connection of $g$ and the co-oriented unimodular normal $E$.

\begin{theorem}\label{normalstheorem}
Let $(M, g)$ be a constant curvature pseudo-Riemannian manifold, and let $\Sigma$ be a co-oriented immersed hypersurface in $M$ such that $\Sigma$ is nondegenerate with respect to the the Levi-Civita connection $\hnabla$ of $g$ and the restriction of $g$ to $\Sigma$ is a pseudo-Riemannian metric. Then the co-oriented equiaffine normal $\nm$ of $\Sigma$ determined by $\hnabla$ and the volume form $\vol_{g}$ of $g$ and the co-oriented unimodular orthogonal vector field $E$ are proportional if and only if $\Sigma$ has constant Gauss-Kronecker curvature, in which case $\nm$ is a constant multiple of $E$ and the equiaffine mean curvature of $\Sigma$ is a constant multiple of the pseudo-Riemannian mean curvature.
\end{theorem}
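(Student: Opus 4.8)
The plan is to use the co-oriented unimodular normal $E$ as the reference transversal in the construction of Theorem \ref{projnormaltheorem} and to read off the equiaffine normal $\nm$ from the formulas in that proof. Since $g$ has constant curvature, $\hnabla$ is projectively flat and preserves $\vol_g$, so Theorem \ref{projnormaltheorem} applies. First I would record the standard facts attached to $E$: writing $\epsilon = g(E,E)\in\{+1,-1\}$ and $\bar g = g|_{\Sigma}$, the connection induced on $\Sigma$ by $\hnabla$ via $E$ is the Levi-Civita connection $\bnabla$ of $\bar g$; the connection one-form with respect to $E$ vanishes, since $0 = X(g(E,E)) = 2\epsilon\tau(X)$; the representative $h$ of the second fundamental form with respect to $E$ is $\epsilon$ times the metric second fundamental form $\bar g(S\,\cdot\,,\,\cdot\,)$, where $S$ is the shape operator with respect to $E$, so the associated endomorphism $h^{\sharp} = \bar g^{-1}h$ equals $\epsilon S$ and $|\det h^{\sharp}| = |\gau|$ with $\gau = \det S$ the Gauss-Kronecker curvature; and $|\imt(E)\vol_g| = |\vol_{\bar g}|$, as one checks by evaluating $\vol_g$ on a $\bar g$-orthonormal tangent frame completed by $E$.

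Next I would translate the defining conditions of $\nm$ into conditions on $\gau$. Because $\nm$ is a section of the affine normal distribution $W$ of Theorem \ref{projnormaltheorem}\eqref{an1}, the normals $\nm$ and $E$ are proportional if and only if $E$ spans $W$, and by the uniqueness in Theorem \ref{projnormaltheorem}\eqref{an1} this holds exactly when $\bnabla$ is aligned with respect to $[h]$, i.e. $n\,h^{PQ}\bnabla_P h_{QI} = h^{PQ}\bnabla_I h_{PQ}$. Here I would invoke Lemma \ref{taulemma} (equivalently \eqref{projflattau}) together with $\tau = 0$ to get $h^{PQ}\bnabla_P h_{QI} = h^{PQ}\bnabla_I h_{PQ}$, so the alignment condition collapses to $(n-1)\,h^{PQ}\bnabla_I h_{PQ} = 0$; and since $\bnabla$ preserves $\vol_{\bar g}$ and $|\det h^{\sharp}| = |\gau|$, one has $h^{PQ}\bnabla_I h_{PQ} = \bnabla_I\log|\det h^{\sharp}| = d_I\log|\gau|$, so for $n\ge 2$ the condition holds if and only if $d\log|\gau| = 0$. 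As $\gau$ is continuous and nowhere zero by nondegeneracy of $\Sigma$ with respect to $\hnabla$, on the connected hypersurface $\Sigma$ this is equivalent to $\gau$ being a nonzero constant, which is the asserted equivalence.

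For the remaining assertions, assume $\gau$ is constant. Then $W = \operatorname{span}(E)$, so $\nm = aE$ for a nowhere vanishing function $a$, and by \eqref{transform} (with $Z = 0$) the second fundamental form representative with respect to $\nm$ is $k = a^{-1}h$, whence $k^{\sharp} = a^{-1}\epsilon S$ and $|\vol_k| = |a|^{-n/2}|\gau|^{1/2}\,|\vol_{\bar g}|$, while $|\imt(\nm)\vol_g| = |a|\,|\vol_{\bar g}|$. The normalization $|\imt(\nm)\vol_g| = |\vol_k|$ of Theorem \ref{projnormaltheorem}\eqref{an2} then forces $|a| = |\gau|^{1/(n+2)}$, which is constant, so $\nm = cE$ for a nonzero constant $c$. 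Finally, differentiating $\nm = cE$ and comparing with \eqref{induced}, using that $c$ is constant and that $\tau = 0$ for $E$, gives $S^{\nm} = cS$ for the equiaffine shape operator; taking traces yields $\amc = cH$, where $H = \tfrac1n\tr S$ is the pseudo-Riemannian mean curvature.

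I expect the argument to be essentially routine once organized this way. The points requiring care are the bookkeeping with the density and sign factors in the volume normalization, the use of $\tau = 0$ and \eqref{projflattau} to reduce the alignment condition to a statement purely about $\gau$, and the passage from constancy of $|\gau|$ to constancy of $\gau$, which relies on connectedness of $\Sigma$ and on nondegeneracy with respect to $\hnabla$ (so that $\gau$ is a continuous nonvanishing function).
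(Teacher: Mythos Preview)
Your argument is correct and follows essentially the same route as the paper's proof: both take $E$ as the reference transversal, use that the connection one-form vanishes and the induced connection is the Levi-Civita connection of $\bar g$, and reduce the proportionality of $\nm$ and $E$ to the vanishing of $d\log|\gau|$ via the machinery of Theorem \ref{projnormaltheorem}. The only cosmetic difference is that the paper invokes the explicit formula \eqref{zdet} for the tangential correction $Z$ and argues $\nm\wedge E = aZ\wedge E = 0 \Leftrightarrow Z = 0$, whereas you phrase the same condition as the alignment condition for $\bnabla$ relative to $[h]$ and simplify it using \eqref{projflattau}; these are two ways of reading the same line of the proof of Theorem \ref{projnormaltheorem}. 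You also spell out the final step about the equiaffine shape operator and mean curvature, which the paper leaves implicit.
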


\begin{proof}
Let $D$, $A$, and $\Lambda$ be the induced connection, shape operator, and second fundamental form determined by $\hnabla$ with respect to the co-oriented unimodular vector field $E$ $g$-orthogonal to $\Sigma$. By definition, the Gauss-Kronecker curvature of $\Sigma$ is $\det A$. The connection one-form determined by $\hnabla$ and $E$ is zero, $D$ is the Levi-Civita connection of the restriction of $g$ to $\Sigma$, and $\Lambda$ and $A$ are related by $g(AX, Y) = \ep\Lambda(X, Y)$ for $X$ and $Y$ tangent to $\Sigma$ and $\ep = g(E, E)$. By the proof of Theorem \ref{projnormaltheorem}, $\nm = a(E + Z)$ where, by \eqref{zdet}, $-(n+2)Z^{P}\Lambda_{IP} = \Lambda^{PQ}D_{I}\Lambda_{PQ} = |\det \Lambda|^{-1}D_{I}|\det \Lambda| = D_{I}\log \det A$ (the last equality because $D$ preserves $g$) and $a^{n+2} = |\vol_{\Lambda}/(\imt(E)\vol_{g})|^{2} = |\det A|$. Since  $Z$ is tangent to $\Sigma$, $E$ is orthogonal to $\Sigma$ and nonvanishing, and $a$ does not vanish because $\Sigma$ is assumed nondegenerate, there vanishes $\nm \wedge E = aZ \wedge E$ if and only if $Z = 0$. Since $-(n+2)Z^{P}\Lambda_{IP} = D_{I}\log \det A$, this occurs if and only if $\det A$ is constant. 
\end{proof}

By Theorem \ref{normalstheorem} a nondegenerate hypersurface in a pseudo-Riemannian space form that has constant pseudo-Riemannian mean curvature zero and nonzero constant Gauss-Kronecker curvature has constant equiaffine mean curvature. If the space form is three-dimensional then these conditions mean that the pseudo-Riemannian principal curvatures of the hypersurface are constant. In \cite{Chen-minimal}, B.-Y. Chen showed that if $M$ is a surface with Riemannian mean curvature zero and constant Gauss curvature in a three-dimensional curvature $c$ Riemannian space form, then either $M$ is totally geodesic or $c > 0$ and $M$ is isometric to an open submanifold of a Clifford torus. The Clifford torus is a special case of the following well-known example of a minimal embedding with constant Gauss-Kronecker curvature. Let $\sphere^{n}(r)$ be the radius $r$ sphere in $(n+1)$-dimensional Euclidean space. In \cite{Chern-Docarmo-Kobayashi} it is shown that the embedding 
\begin{align}\label{spheresphere}
\sphere^{m}\left(\sqrt{m/n}\right) \times \sphere^{n-m}\left(\sqrt{(n-m)/n}\right) \to \sphere^{n+1}(1)
\end{align} 
induced by the embedding $\rea^{m+1} \times \rea^{n-m+1} \to \rea^{n+2}$ is minimal, with $m$ principal curvatures equal to $\pm \sqrt{(n-m)/m}$ and $n-m$ principal curvatures equal to $\mp\sqrt{m/(n-m)}$. In particular, this embedding has constant Gauss-Kronecker curvature. This shows that there are nontrivial examples of equiaffine mean curvature zero hypersurfaces in spheres. Recall that a hypersurface in a space form is \textit{isoparameteric} if its pseudo-Riemannian principal curvatures are constant. Since such a hypersurface has constant Gauss-Kronecker curvature, if no pseudo-Riemannian principal curvature is zero, Theorem \ref{normalstheorem} applies to show that its equiaffine mean curvature is constant. 
\begin{corollary}
In a pseudo-Riemannian space form, the equiaffine mean curvature of an isoparametric hypersurface having nonzero Gauss-Kronecker curvature is constant.
\end{corollary}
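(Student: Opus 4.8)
The plan is to deduce the corollary directly from Theorem \ref{normalstheorem}; essentially all the work lies in checking that the two nondegeneracy hypotheses of that theorem are met and that the relevant curvature quantities are constant.

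First I would unpack the definition. Let $\Sigma$ be an isoparametric hypersurface in the pseudo-Riemannian space form $(M, g)$, so that its pseudo-Riemannian principal curvatures --- the eigenvalues of the shape operator $A$ determined by the Levi-Civita connection $\hnabla$ of $g$ and the co-oriented unimodular orthogonal normal $E$ --- are constant. This immediately gives that the pseudo-Riemannian mean curvature $n^{-1}\tr A$ is constant, being a symmetric function of the principal curvatures, and likewise that the Gauss-Kronecker curvature $\det A$, the product of the principal curvatures, is a constant function on $\Sigma$; by hypothesis this constant is nonzero.

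Next I would verify the hypotheses of Theorem \ref{normalstheorem}. The restriction of $g$ to $\Sigma$ is a pseudo-Riemannian metric: this is part of what it means for $\Sigma$ to be a (nondegenerate) hypersurface, and in any case is needed for $A$ and hence the principal curvatures to be defined. For the equiaffine nondegeneracy of $\Sigma$ with respect to $\hnabla$, I would invoke the identity $g(AX, Y) = \ep\Lambda(X, Y)$ recorded in the proof of Theorem \ref{normalstheorem}, where $\ep = g(E, E)$ and $\Lambda$ is the second fundamental form determined by $\hnabla$ and $E$. Since $\det A \neq 0$, the operator $A$ is invertible, so $\Lambda$ is nondegenerate, which is exactly nondegeneracy of $\Sigma$ with respect to $\hnabla$. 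Thus the hypothesis of nonzero Gauss-Kronecker curvature is precisely what delivers equiaffine nondegeneracy.

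With both hypotheses in place, Theorem \ref{normalstheorem} applies: because $\det A$ is a nonzero constant, the co-oriented equiaffine normal $\nm$ is a constant multiple of $E$, and the equiaffine mean curvature $\amc$ of $\Sigma$ equals a constant multiple of the pseudo-Riemannian mean curvature. Since the pseudo-Riemannian mean curvature is constant by the isoparametric condition, $\amc$ is constant, which is the assertion. I do not expect a genuine obstacle here; the only point requiring care is the distinction, emphasized in Section \ref{comparisonsection}, between equiaffine nondegeneracy with respect to $\hnabla$ and nondegeneracy of the restriction of $g$ to $\Sigma$, together with the observation that the nonvanishing of the Gauss-Kronecker curvature is equivalent to the former.
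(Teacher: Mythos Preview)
Your argument is correct and follows essentially the same approach as the paper: reduce to Theorem \ref{normalstheorem} by noting that constancy of the principal curvatures forces both the Gauss--Kronecker curvature and the pseudo-Riemannian mean curvature to be constant. Your explicit verification of the two nondegeneracy hypotheses (in particular, that nonzero Gauss--Kronecker curvature yields equiaffine nondegeneracy via $g(AX,Y)=\ep\Lambda(X,Y)$) fleshes out what the paper leaves implicit.
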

It is reasonable to ask under what conditions a constant equiaffine mean curvature hypersurface in a pseudo-Riemannian space form must be isoparametric. The question is very general and here no attempt is made to review the related literature. The answer must depend on the curvature of the ambient space form, as is illustrated by the theorem of B.-Y. Chen mentioned above. 

\bibliographystyle{amsplain}

\def\cprime{$'$} \def\cprime{$'$} \def\cprime{$'$} \def\cprime{$'$}
  \def\cprime{$'$} \def\cprime{$'$}
  \def\polhk#1{\setbox0=\hbox{#1}{\ooalign{\hidewidth
  \lower1.5ex\hbox{`}\hidewidth\crcr\unhbox0}}} \def\cprime{$'$}
  \def\Dbar{\leavevmode\lower.6ex\hbox to 0pt{\hskip-.23ex \accent"16\hss}D}
  \def\cprime{$'$} \def\cprime{$'$} \def\cprime{$'$} \def\cprime{$'$}
  \def\cprime{$'$} \def\cprime{$'$} \def\cprime{$'$} \def\cprime{$'$}
  \def\cprime{$'$} \def\cprime{$'$} \def\cprime{$'$} \def\cprime{$'$}
  \def\dbar{\leavevmode\hbox to 0pt{\hskip.2ex \accent"16\hss}d}
  \def\cprime{$'$} \def\cprime{$'$} \def\cprime{$'$} \def\cprime{$'$}
  \def\cprime{$'$} \def\cprime{$'$} \def\cprime{$'$} \def\cprime{$'$}
  \def\cprime{$'$} \def\cprime{$'$} \def\cprime{$'$} \def\cprime{$'$}
  \def\cprime{$'$} \def\cprime{$'$} \def\cprime{$'$} \def\cprime{$'$}
  \def\cprime{$'$} \def\cprime{$'$} \def\cprime{$'$} \def\cprime{$'$}
  \def\cprime{$'$} \def\cprime{$'$} \def\cprime{$'$} \def\cprime{$'$}
  \def\cprime{$'$} \def\cprime{$'$} \def\cprime{$'$} \def\cprime{$'$}
  \def\cprime{$'$} \def\cprime{$'$} \def\cprime{$'$} \def\cprime{$'$}
  \def\cprime{$'$} \def\cprime{$'$} \def\cprime{$'$} \def\cprime{$'$}
\providecommand{\bysame}{\leavevmode\hbox to3em{\hrulefill}\thinspace}
\providecommand{\MR}{\relax\ifhmode\unskip\space\fi MR }
\providecommand{\MRhref}[2]{%
  \href{http://www.ams.org/mathscinet-getitem?mr=#1}{#2}
}
\providecommand{\href}[2]{#2}

\end{document}